%
\documentclass[12pt, reqno]{amsart}
\usepackage{amsmath, amsthm, amscd, amsfonts, amssymb, graphicx, color, mathrsfs}
\usepackage[bookmarksnumbered, colorlinks, plainpages]{hyperref}
\usepackage{cite}
\usepackage[all]{xy}
\usepackage{slashed}
\usepackage{tikz-cd}
\usepackage{mathabx}
\usepackage{tipa}
\usepackage{soul}
\usepackage{cancel}
\usepackage{ulem}
\usepackage{algorithm}
\usepackage{algpseudocode}

\textheight 22.5truecm \textwidth 14.5truecm
\setlength{\oddsidemargin}{0.35in}\setlength{\evensidemargin}{0.35in}

\setlength{\topmargin}{-.5cm}


\setlength{\textwidth}{15.2cm}
\setlength{\textheight}{22.7cm}
\setlength{\topmargin}{0mm}
\setlength{\oddsidemargin}{3mm}
\setlength{\evensidemargin}{3mm}
\setlength{\footskip}{1cm}


\newtheorem{theorem}{Theorem}[section]
\newtheorem{lemma}[theorem]{Lemma}

\newtheorem{proposition}[theorem]{Proposition}
\newtheorem{corollary}[theorem]{Corollary}

\theoremstyle{definition}
\newtheorem{definition}[theorem]{Definition}
\newtheorem{example}[theorem]{Example}

\newtheorem{algo}[theorem]{Algorithm}

\theoremstyle{remark}
\newtheorem{remark}[theorem]{Remark}
\numberwithin{equation}{section}

\begin{document}
\setcounter{page}{1}

\title[Control  of the Cauchy problem]{Control of the Cauchy problem on Hilbert spaces: A  global approach via symbol criteria}

\author[D. Cardona]{Duv\'an Cardona}
\address{
  Duv\'an Cardona:
  \endgraf
  Department of Mathematics: Analysis, Logic and Discrete Mathematics
  \endgraf
  Ghent University, Belgium
  \endgraf
  {\it E-mail address} {\rm duvan.cardonasanchez@ugent.be}
  }

\author[J. Delgado]{Julio Delgado}
\address{
  Julio Delgado:
  \endgraf
  Departamento de Matem\'aticas
  \endgraf
  Universidad del Valle
  \endgraf
  Cali-Colombia
    \endgraf
    {\it E-mail address} {\rm delgado.julio@correounivalle.edu.co}
  }

  \author[B. Grajales]{Brian Grajales}
\address{
  Brian Grajales:
  \endgraf
  Department of Mathematics
  \endgraf
  Universidad de Pamplona, Colombia
  \endgraf
  {\it E-mail address} {\rm brian.grajales@unipamplona.edu.co}
  }

\author[M. Ruzhansky]{Michael Ruzhansky}
\address{
  Michael Ruzhansky:
  \endgraf
  Department of Mathematics: Analysis, Logic and Discrete Mathematics
  \endgraf
  Ghent University, Belgium
  \endgraf
 and
  \endgraf
  School of Mathematics
  \endgraf
  Queen Mary University of London
  \endgraf
  United Kingdom
  \endgraf
  {\it E-mail address} {\rm michael.ruzhansky@ugent.be}
  }

\thanks{The authors were supported  by the FWO  Odysseus  1  grant  G.0H94.18N:  Analysis  and  Partial Differential Equations, by the Methusalem programme of the Ghent University Special Research Fund (BOF)
(Grant number 01M01021). B. Grajales has been partially supported by  Universidad de Pamplona. J. Delgado is also supported by Vice. Inv.Universidad del Valle Grant CI 71329, MathAmSud and Minciencias-Colombia under the project MATHAMSUD21-MATH-03. M. Ruzhansky is also supported  by EPSRC grant 
EP/R003025/2.
}

     \keywords{ Control theory, Diffusion models, Exact controllability, fractional models, Controllability cost}
     \subjclass[2020]{35S30, 42B20; Secondary 42B37, 42B35}

\begin{abstract} Let $A$ and $B$ be  invariant linear operators with respect to a decomposition $\{H_{j}\}_{j\in \mathbb{N}}$ of a Hilbert space $\mathcal{H}$  in subspaces of finite dimension. We give necessary and sufficient conditions for the controllability of the Cauchy problem $$ u_t=Au+Bv,\,\,u(0)=u_0,$$    in terms of the (global)  matrix-valued symbols $\sigma_A$ and $\sigma_B$ of $A$ and $B,$ respectively, associated to the decomposition   $\{H_{j}\}_{j\in \mathbb{N}}$.  Then, we present some applications  including the controllability of the Cauchy problem on compact manifolds for elliptic operators and the controllability  of fractional diffusion models for H\"ormander sub-Laplacians on compact Lie groups.  We also give conditions for the controllibility of wave and Schr\"odinger equations in these settings. 
\end{abstract} 

\maketitle

\tableofcontents
\allowdisplaybreaks

\section{Introduction}

\subsection{Outline and methodology} In this work we develop an approach to determine the controllability of the Cauchy problem on Hilbert spaces. Although there exists a very well-known criterion for the controllability of this problem, which is based on the Hilbert Uniqueness Method  due to J. L. Lions \cite{Lions88:I,Lions88:II}, which reduces the controllability of a system to the validity of the corresponding observability inequality for the adjoint system, here we provide a criterion inspired from the microlocal analysis of pseudo-differential operators (inspired in the notion of the symbol of an operator, see H\"ormander \cite{Hormander1985III}), which decouples the system 
\begin{equation}\label{Control:system:Intro}
  \frac{du}{dt}=Au+Bv(t),\,\,u(0)=u_0,\,\,  t\in [0,T], 
\end{equation} (here $A$ and $B$ are densely defined operators on a separable Hilbert space $\mathcal{H}$)
in an infinite number of finite-dimensional control systems
\begin{equation}\label{control:system:intro}
  {d\widehat{u}_\ell}/{dt}=A_\ell\widehat{u}_\ell+B_\ell\widehat{v}_\ell(t),\,\,\widehat{u}_\ell(0)=\widehat{u}_{0,\ell}\in H_\ell,\,\,t\in [0,T],\,\,\ell \in \mathbb{N}_0, 
\end{equation}
where one is allowed  to apply the {\it Kalman criterion}, see \cite{Kalman}. In this  context, Kalman's criterion says that the  {\it rank condition} 
\begin{equation}\label{Rank:intro}
    \textnormal{Rank}[B_\ell,A_\ell B_\ell,\cdots, A^{n_\ell-1}_\ell B_\ell]=n_\ell=\dim(H_\ell),
\end{equation} provides a necessary and sufficient condition for the controllability of \eqref{control:system:intro}. Our approach shows that the exact controllability of the system  \eqref{Control:system:Intro} implies the exact controllability of any system \eqref{control:system:intro}. On the other hand, our approach also shows that if every system \eqref{control:system:intro} is controllable and its controllability cost is uniformly bounded in $\ell$ (that is, if the coupled systems \eqref{control:system:intro} has a {\it globally finite controllability cost}) we are able to provide the exact controllability of \eqref{Control:system:Intro}. 

The decoupling procedure from \eqref{Control:system:Intro} to \eqref{control:system:intro} is carried out in such a way that the family of finite-dimensional subspaces $(H_\ell)_{\ell\in \mathbb{N}_0}$ provides an orthogonal decomposition of the underlying space  $\mathcal{H}=\bigoplus_{\ell} H_\ell.$ Relative to this decomposition, and to any choice of an orthonormal basis $\mathcal{B}_\ell$ of $H_\ell,$ there is a canonical Fourier transform
\begin{equation}
 \textbf{(FT):}\,\,  u\mapsto \widehat{u}(\ell)\in \mathbb{C}^{n_\ell},
\end{equation} provided by the orthogonal projections $P_{\ell}:\mathcal{H}\rightarrow H_{\ell},$ that in view of  the Fourier inversion formula
\begin{equation}
    u=\sum_{\ell\in \mathbb{N}_0}P_{\ell} u,
\end{equation}is given by
$ 
   \widehat{u}(\ell)=[P_{\ell} u]_{ \mathcal{B}_\ell}, 
$ that is the coordinate vector of $P_{\ell} u$ with respect to the basis $\mathcal{B}_\ell.$ Certainly one has the identification $\widehat{u}(\ell)\cong P_{\ell} u. $ In other words the decoupling procedure from \eqref{Control:system:Intro} into \eqref{control:system:intro} is nothing else that  taking the Fourier transform of the system \eqref{Control:system:Intro} relative to the decomposition $(H_\ell)_{\ell\in \mathbb{N}_0}.$ We explain it in the following diagram:
\begin{equation}
    \boxed{ \eqref{Control:system:Intro} \Longrightarrow \textbf{(FT)} \Longrightarrow \eqref{control:system:intro} }
\end{equation} On the other hand, a fact that is important for our further analysis  is the coupling procedure from all the systems in \eqref{control:system:intro} to \eqref{Control:system:Intro}. According to the standard terminology in {\it quantum mechanics} we do it by using the quantisation  procedure {\it symbol-to-operator}. In our case (and under the identification $\widehat{u}(\ell)\cong P_{\ell} u$) the sequences $(A_\ell)_{\ell\in \mathbb{N}}$ and $(B_\ell)_{\ell\in \mathbb{N}},$ are the symbols of the operators $A$ and  $B,$ respectively. The quantisation procedures
\begin{equation}
    (A_\ell)_{\ell\in \mathbb{N}}\mapsto A=\textbf{QP}((A_\ell)_{\ell\in \mathbb{N}}),\,\,(B_\ell)_{\ell\in \mathbb{N}}\mapsto B=\textbf{QP}((B_\ell)_{\ell\in \mathbb{N}}),\footnote{Here, we have employed the notation $A=\textbf{QP}((A_\ell)_{\ell\in \mathbb{N}})$ to indicate that the operator $A$ is the quantisation of the sequence $(A_\ell)_{\ell\in \mathbb{N}}.$ We have employed $\textbf{QP}$ to abbreviate {\it ``Quantisation procedure''}. In the standard terminology of the theory of pseudo-differential operators one also write $A=\textbf{Op}((A_\ell)_{\ell\in \mathbb{N}})$ to indicate that $A$ is the operator associated to the {\it symbol}  $(A_\ell)_{\ell\in \mathbb{N}}.$}
\end{equation}allow to analyse the properties of the operators $A$ and $B,$ from the properties of their symbols $\sigma_A(\ell):=A_\ell$ and $\sigma_B(\ell):=B_\ell,$ $\ell \in \mathbb{N},$ respectively. In other words, the coupling procedure from \eqref{control:system:intro} to \eqref{Control:system:Intro} is carried out by quantising the systems in \eqref{control:system:intro}. We explain it in the following diagram: 
\begin{equation}
    \boxed{ \eqref{control:system:intro} \Longrightarrow \textbf{(QP)} \Longrightarrow \eqref{Control:system:Intro} }
\end{equation}
As we will notice, the coupling and the decoupling procedure will be effective, in the sense that the information in the following diagram is preserved
\begin{equation}\label{Approach:Control}
    \boxed{ \eqref{Control:system:Intro} \Longrightarrow \textbf{(FT)} \Longrightarrow \eqref{control:system:intro} \Longrightarrow \textbf{(QP)} \Longrightarrow \eqref{Control:system:Intro} }
\end{equation}if the operators $A$ and $B$ leave invariant the orthogonal decomposition $(H_{\ell})_{\ell\in \mathbb{N}}.$ So, a fundamental {\it geometric property} assumed during this work is that every $H_{\ell}$ is an invariant subspace of $A$ and $B,$ respectively, that is, $$\forall \ell,\,\,\,AH_\ell\subset H_\ell \text{  and  }BH_\ell \subset H_{\ell}.$$ Having explained the methodology of our approach  we are going to explain our main result. We will also give several general examples of this property.
\subsection{The main result}
According to the theory of invariant operators on Hilbert spaces developed by the second and fourth author in \cite{FourierHilbert,DelRuzTrace1}, $A$ and $B$ are Fourier multipliers on $\mathcal{H}$ (associated to the decomposition $(H_{\ell})_{\ell\in \mathbb{N}}$).
The construction of the global matrix-valued symbol $\sigma_T$ in \cite{FourierHilbert,DelRuzTrace1} of a Fourier multiplier $T$ on a Hilbert space $\mathcal{H}$ can be found in Theorem \ref{THM:inv-rem} of Subsection \ref{Fourier:H}. With the notations employed in  Subsection \ref{Fourier:H} we present our main Theorem \ref{Main:Theorem:HS} in Section  \ref{Control:Compact}. In the case where $A$ generates a (strongly continuous semigroup) $C_0$-semigroup, our main theorem essentially says that 
\begin{equation}\label{Approach:Control}
 \small{\boxed{  \eqref{Control:system:Intro} \textbf{ is controllable } \Longleftrightarrow \forall \ell, \eqref{control:system:intro} \textbf{ satisfies the Kalman condition }  \eqref{Rank:intro} }}
\end{equation}and we compute in a sharp way the relation between the controllability cost of \eqref{Control:system:Intro}  with respect to the {\it global controllability cost} of the systems in \eqref{Rank:intro}, see Definition \ref{Definition:global:consts}. 
We refer the reader to Section   \ref{Control:Compact} for details. We observe that in the case where $M$ is a compact manifold without boundary, the Fourier analysis notion discussed above can be associated to any elliptic and positive elliptic pseudo-differential operator $E$ on $M$ in the sense of Seeley \cite{seeley1,seeley2}. Our approach includes this setting and other applications will be presented in the next subsection, see also Section \ref{Applications}.

\subsection{Applications}
From the mathematical perspective, there are many different contexts in which one can obtain an orthogonal decomposition of a Hilbert space $\mathcal{H}.$ For instance and for our purposes we give some examples:
\begin{itemize}
    \item On a compact manifold $M$ without boundary, $L^2(M)=\bigoplus_{\ell} H_\ell$ can be decomposed into the eigenspaces $H_\ell=\textnormal{Ker}(E-\lambda_\ell I)$ of a positive and elliptic pseudo-differential operator $E$ on $M.$
    \item On a compact manifold $M$ without boundary, (in particular,  on an arbitrary compact Lie group $M=G$) $L^2(M)=\bigoplus_{\ell} H_\ell$ can be decomposed into the eigenspaces $H_\ell=\textnormal{Ker}(\mathcal{L}^{s/2}-\lambda_\ell I)$ of a fractional power of a positive sub-Laplacian $\mathcal{L}=-\sum_{j=1}^kX_j^2,$ associated to a H\"ormander system of vector-fields $\mathcal{X}=\{X_1,\cdots, X_k\}$ satisfying the H\"ormander condition. It means that the vector fields in  $\mathcal{X}$ and their iterated commutators span the tangent space $TM.$
    \item In the case of $\mathbb{R}^n,$ $L^2(\mathbb{R}^n)=\bigoplus_{\ell} H_\ell$ can be decomposed into the eigenspaces $H_\ell=\textnormal{Ker}(\mathscr{H}-\lambda_\ell I)$ of the harmonic oscillator $\mathscr{H}=-\Delta_x+|x|^2,$ (or of more general anharmonic oscillators $\mathscr{H}_{l_1,l_2}=(-\Delta_x)^{l_1}+|x|^{2l_2}$, the  fractional relativistic Schr\"odinger operators, and of course the special case of  relativistic Schr\"odinger operators $\sqrt{I-\Delta}+|x|^{2l}$).
    \item On open bounded domains $\Omega$ of $\mathbb{R}^n,$ $L^2(\Omega)=\bigoplus_{\ell} H_\ell$ can be decomposed into the eigenspaces $H_\ell=\textnormal{Ker}((-\Delta)^s-\lambda_\ell I)$ of the spectral fractional Laplacian $(-\Delta)^s$ with homogeneous boundary Dirichlet data on $\partial \Omega.$
    \item In any separable complex Hilbert space $\mathcal{H}$ admitting an  unbounded self-adjoint operator $E$ with discrete spectrum (according to the spectral theorem), the previous situations supply  examples.
\end{itemize}Then, our analysis will include models of the form \eqref{Control:system:Intro} and then the following specific situations:
\begin{itemize}
    \item {\bf Control of fractional elliptic problems.} $M$ is a closed manifold, $\mathcal{H}=L^2(M),$ $A=E^s$ is a positive power of an elliptic operator $E$ on $M,$ and $B$ being an operator commuting with $E,$ (this condition assures that $B$ leaves invariant the eigenspaces of $A$).
    \item {\bf Control of fractional subelliptic problems.} Again, on a closed manifold $M,$ $A$ can be  a fractional power of a positive sub-Laplacian $\mathcal{L}=-\sum_{j=1}^kX_j^2,$ associated to a H\"ormander system of vector-fields $\mathcal{X}=\{X_1,\cdots, X_k\}$ satisfying the H\"ormander condition and $B$ being a continuous linear operator commuting with $A$.
    \item {\bf Control of fractional diffusion models for anharmonic operators and relativistic Schr\"odinger operators.} $A$ can be the harmonic oscillator  $\mathscr{H}=-\Delta_x+|x|^2,$ acting on $C^\infty_0(\mathbb{R}^n)\subset L^2(\mathbb{R}^n)$ (or $A$ can be a more general anharmonic oscillator of the type $\mathscr{H}_{l_1,l_2}=(-\Delta_x)^{l_1}+|x|^{2l_2}$) and $B$ commuting with $A.$ 
    Similarly, $A$ can be a relativistic Schr\"odinger operator 
 of the form $\sqrt{I-\Delta}+|x|^{2l}$ and $B$ commuting with $A.$
    \item {\bf Control in compact Lie groups setting.} $M=G$ is a compact Lie group and $A$ and $B$ are continuous linear operators on $C^\infty(G)$ being left-invariant. This mean that they commute with the left-action $L_x:f\mapsto f(x\cdot)$ of the group to $C^\infty(G).$
\end{itemize}Although the approach of this work, summarised in  \eqref{Approach:Control}, is designed for analysing general Cauchy problems on Hilbert spaces, our degree of generality is justified by a variety of applications (where particular contexts are given by the previous examples). Therein we allow the analysis of non-local models, namely, where the main term $A$ is a non-local operator as in the case of the fractional Laplacian $(-\Delta)^s$ on an open domain $\Omega$  or on a closed manifold $M,$ the fractional sub-Laplacian $(-\mathcal{L})^s,$  any positive power $E^s$ of an elliptic operator $E,$ or any PDE where $A$ having a discrete spectrum has principal terms involving pseudo-differential and/or integral terms.

\subsection{State-of-the-art} There has been a growing activity with respect to the research on the controllability of fractional diffusion models and other differential problems involving non-local operators. Next, we give some references related with this work.

\subsubsection{A general overview}
The growing research activity in the setting of fractional diffusion models is justified by  emerging models  in different branches of science and engineering. For instance, non-local and fractional equations appear as models in turbulence problems \cite{Bakunin2008}, in image processing \cite{Gilboa2008}, population dynamics \cite{DeRoos2002}, and e.g. in optimal control of fractional Laplacians with variable exponent models applied to image denoising \cite{Antil2019:2,Bahrouni2018}.   

In addition, several of the recent works in literature (see e.g. \cite{Acosta:etal:2017,Acosta2017:2,Andersson2019,Bonito2019,Boyer2011}) have been using numerical methods for  fractional Laplacians  where the techniques  are based in the works by Glowinski and J. L. Lions, see e.g. \cite{GlowinskiLions94,GlowinskiLions2008}. From the mathematical point of view, there has been a huge activity with recent pioneering works including extension techniques, see Caffarelli and Silvestre \cite{Caffarelli2007,Caffarelli2009}, and other models including inverse problems and analysis of non-local PDE, see e.g. S. Dipierro, X. Ros-Oton, and E. Valdinoci \cite{Dipierro2017,Dipierro2017:2},  Fall and  Felli \cite{Fall2014},  Ghosh,  R\"uland,   Salo,  and Uhlmann \cite{Ghosh2020}. For the recent activity involving  optimal control of diffusion models, we refer to \cite{Antil2019,Antil2019:2,Antil2019:3} and for the analysis of fractional hyperbolic and dispersive problems we refer to \cite{Biccari2019:2,Biccari2020,Biccari2020:2,FC:Zuazua2016}. Other recent works involving control and numerical methods  for fractional diffusion models can be found in the recent works  \cite{Biccari2019,Biccari2020:3,Claus2020} and the extensive list of references therein.

In the wide spectrum of the numerical analysis of non-local models, the {\it penalized uniqueness method} has been  used e.g. by Boyer, Hubert, and Rousseau in \cite{Boyer2011} and by  Glowinski and Lions in \cite{GlowinskiLions94} to compute control functions numerically in the analysis of fractional Laplacians. As we discussed above, these  operators are  of worthy interest since they appear in a large number of models describing practical situations. Among them we refer to \cite{Longhi2015} where a realization of the fractional Schr\"odinger equation is applied to optics, to \cite{WeissBloemenAntil2020} where the fractional Laplacian appears in the scalar Helmholtz equation which is used in electromagnetic interrogation in Earth's interior, and a classical paper by Mandelbrot and Van Ness \cite{MandelbrotNess1968} dealing with fractional Brownian motions. 

It is worth to mention that an alternative approach to study the controllability or even, the approximate controllability of the Cauchy problem $${du}/{dt}=Au+Bv(t),\,\,u(0)=u_0,\,\,  t\in [0,T] ,$$  is to accurately discretize  the operator $A.$ Some of these methods  have been developed in \cite{Acosta:etal:2017,Acosta2017:2,BiccariIMA,HuangOberman2014} for the fractional Laplacian and in \cite{NocheOtaSalg2015} for the {\it Dirichlet fractional Laplacian}, which is is defined as the power of
the Laplace operator, obtained by using the spectral decomposition of the Laplacian.

\subsubsection{Control theory on compact manifolds} The study of the controllability on Riemannian manifolds has a long tradition.  The fundamental models to be understood are the {\it heat } and the {\it wave } equation.  The exact controllability of the wave equation was proved firstly by
Chen and  Millman \cite{ChenMillman1980}. The exact controllability of the heat equation in the setting of internal control was proved in the seminal work of Lebeau and Robbiano \cite{LabeauRobbiano1995}. In particular, the method developed in \cite{LabeauRobbiano1995} changed the perspective on the field, by reducing the observability inequality of the adjoint system  to the validity of a spectral inequality, see Jerison and Lebeau \cite{JerisonLabeau} and Lebeau and Zuazua \cite{LebeauLebeau1998}.  Such a spectral inequality is a generalisation of the spectral inequality due to  Donnelly and Fefferman \cite{DonnellyFefferman83,DonnellyFefferman,DonnellyFefferman1990,DonnellyFefferman1992}. 

About fractional diffusion models for positive powers of general elliptic pseudo-differential operators, we refer the reader to  \cite{Cardona2022,CardonaDelgadoRuzhanskyControl2022}. On the other hand, we observe that in \cite{CoronFursikov1996}, the controllability in small time for the Navier-Stokes equations of incompressible fluids on compact two-dimensional manifolds was proved using purely analytic tools by Coron and Furkisov.

In the setting of the fractional heat equation  on open domains (with the suitable boundary conditions) several of these works have been dedicated to the internal controllability problem and the numerical analysis for this problems have been concentrated in the one dimensional case. To illustrate this, consider the fractional heat equation
    \begin{equation}\label{model:heat}
    \frac{du}{dt}=-(\Delta_M)^{s/2}u+1_{\omega}v,
    \end{equation}where $\omega\subset M$ is an open subset. For $s=2$ the null-controllability of the model \eqref{model:heat} was proved by  Lebeau and Robbiano \cite{LabeauRobbiano1995}. Micu and Zuazua \cite{MicuZuazua2006} and  
    Biccari and Hernández-Santamaría proved that, in one dimension, it is null controllable with a control function $v\in L^2(\omega\times(0,T))$ if and only if $1<s<2,$ and the authors have analysed the approximate controllability of the system for $0<s<1,$ see also \cite{BiccariIMA}. In \cite{Biccari2020:3}, Biccari, Warma, and Zuazua proved the same result with bounded control functions.

\subsection{Organisation of the work}

This paper is organised as follows:
 in Section \ref{Preliminaries} we present the preliminaries about the Fourier analysis on Hilbert spaces and the theory of invariant operators and their symbol properties as developed in \cite{FourierHilbert,DelRuzTrace1}.  We also present the construction of the matrix-valued symbols for continuous linear operators on compact Lie groups as developed in \cite{Ruz} and the results of the abstract control theory used in this work, namely, the Kalman condition and the observability criterion for the controllability of the Cauchy problem on Hilbert spaces.
     Our main result in the form of Theorem \ref{Main:Theorem:HS} will be presented in Section \ref{Control:Compact}.
     Section \ref{Applications} is dedicated to presenting a variety of applications of our main result.  It is organised as follows:
    In Subsection \ref{Control:compact:manifolds} we analyse the controllability of diffusion models for elliptic operators on compact manifolds.
         In Subsection \ref{Compact:lie:group:sec} we revisit Theorem   \ref{Main:Theorem:HS} in the context of a compact Lie group $G$ and  we give a criterion in Theorem \ref{theorem:compact:Lie:group} adapted to the Cauchy problem for general left-invariant operators on $G.$ The criterion is refined in terms of the matrix-valued symbols of the operators constructed from the group Fourier transform on $G,$ or equivalently, from the representation theory of the group as developed in \cite{Ruz}.
         Subsection \ref{section:applications:Hor} is dedicated to the controllability of fractional models determined by powers of H\"ormander sub-Laplacians on compact Lie groups.
         In Subsection \ref{Wave:vs:heat} we deduce the controllability of the heat operator from the controllability of the wave operator via a Kalman type analysis on each representation space. In Subsection \ref{Sch:section} we present the Kalman condition for  the control of the Schr\"odinger equation as an application of Theorem  \ref{Main:Theorem:HS}. Finally, in Section \ref{conclusions} we present some conclusions about the symbol criteria approach developed in this work.

\section{Fourier multipliers and abstract control theory}\label{Preliminaries}

\subsection{Fourier multipliers on Hilbert spaces}\label{Fourier:H}
We now recall the notion of invariant operators introduced in \cite{DelRuzTrace1} and which is based on the following theorem:
\begin{theorem}\label{THM:inv-rem}
Let $\mathcal{H}$ be a complex Hilbert space and let $\mathcal{H}^{\infty}\subset \mathcal{H}$ be a dense
linear subspace of $\mathcal{H}$. Let $\{d_{j}\}_{j\in\mathbb{N}_{0}}\subset\mathbb{N}$ and let
$\{e_{j}^{k}\}_{j\in\mathbb{N}_{0}, 1\leq k\leq d_{j}}$ be an
orthonormal basis of $\mathcal{H}$ such that
$e_{j}^{k}\in \mathcal{H}^{\infty}$ for all $j$ and $k$. Let $H_{j}:={\rm span} \{e_{j}^{k}\}_{k=1}^{d_{j}}$,
and let $P_{j}:\mathcal{H}\to H_{j}$ be the orthogonal projection.
For $f\in\mathcal{H}$, we denote $\widehat{f}(j,k):=(f,e_{j}^{k})_{\mathcal{H}}$ and let
$\widehat{f}(j)\in \mathbb{C}^{d_{j}}$ denote the column of $\widehat{f}(j,k)$, $1\leq k\leq d_{j}.$
Let $T:\mathcal{H}^{\infty}\to \mathcal{H}$ be a linear operator.
Then the following
conditions are equivalent:
\begin{itemize}
\item[(A)] For each $j\in\mathbb{N}_0$, we have $T(H_j)\subset H_j$. 
\item[(B)] For each $\ell\in\mathbb{N}_0$ there exists a matrix 
$\sigma_{T}(\ell)\in\mathbb{C}^{d_{\ell}\times d_{\ell}}$ such that for all $e_j^k$ 
$$
\widehat{Te_j^k}(\ell,m)=\sigma_{T}(\ell)_{mk}\delta_{j\ell}.
$$
\item[(C)]  For each $\ell\in\mathbb{N}_0 $ there exists a matrix 
$\sigma_{T}(\ell)\in\mathbb{C}^{d_{\ell}\times d_{\ell}}$ such that
 \[\widehat{Tf}(\ell)=\sigma_{T}(\ell)\widehat{f}(\ell)\]
 for all $f\in\mathcal{H}^{\infty}.$
\end{itemize}

The matrices $\sigma_{T}(\ell)$ in {\rm (B)} and {\rm (C)} coincide.

\noindent The equivalent properties {\rm (A)--(C)} follow from the condition 
\begin{itemize}
\item[(D)] For each $j\in\mathbb{N}_0$, we have
$TP_j=P_jT$ on $\mathcal{H}^{\infty}$.
\end{itemize}
If, in addition, $T$ extends to a bounded operator
$T\in{\mathscr L}(\mathcal{H})$ then {\rm (D)} is equivalent to {\rm (A)--(C)}.
\end{theorem}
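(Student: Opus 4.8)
The plan is to establish the elementary equivalence (A) $\Leftrightarrow$ (B) by a blockwise linear-algebra computation, then record the trivial implication (C) $\Rightarrow$ (B) (which simultaneously forces the matrices in (B) and (C) to coincide), and to isolate the only substantial point, the passage (A)/(B) $\Rightarrow$ (C). Finally I would treat (D), proving (D) $\Rightarrow$ (A) directly and, in the bounded case, the converse (A) $\Rightarrow$ (D), which closes the last equivalence.

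First, for (A) $\Rightarrow$ (B): since $T(H_j)\subset H_j$ and $H_j={\rm span}\{e_j^k\}_k$ is finite dimensional, each $Te_j^k\in H_j$ expands as $Te_j^k=\sum_{m=1}^{d_j}(Te_j^k,e_j^m)_{\mathcal H}\,e_j^m$; setting $\sigma_T(j)_{mk}:=(Te_j^k,e_j^m)_{\mathcal H}$ and using $H_j\perp H_\ell$ for $j\ne\ell$ gives $\widehat{Te_j^k}(\ell,m)=\sigma_T(\ell)_{mk}\delta_{j\ell}$, which is (B). Conversely, (B) says the Fourier coefficients of $Te_j^k$ are supported on the block $\ell=j$, i.e. $Te_j^k=\sum_m\sigma_T(j)_{mk}e_j^m\in H_j$, and linearity yields $T(H_j)\subset H_j$, which is (A). For (C) $\Rightarrow$ (B) I would evaluate (C) at $f=e_j^k$, for which $\widehat f(\ell)=\delta_{j\ell}\mathbf e_k$ (the $k$-th coordinate vector), obtaining $\widehat{Te_j^k}(\ell)=\sigma_T(\ell)\widehat f(\ell)=\delta_{j\ell}\sigma_T(\ell)\mathbf e_k$, which is exactly (B) with the \emph{same} matrices; this proves that the matrices of (B) and (C) agree.

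The one genuine step is (A)/(B) $\Rightarrow$ (C). Fix $f\in\mathcal H^{\infty}$ and $\ell$. Writing $f=P_\ell f+(I-P_\ell)f$, with both summands in $\mathcal H^{\infty}$, linearity of $T$ gives $Tf=T(P_\ell f)+T\big((I-P_\ell)f\big)$; by (A) the first term lies in $H_\ell$, and expanding $P_\ell f=\sum_k\widehat f(\ell,k)e_\ell^k$ one computes $\widehat{T(P_\ell f)}(\ell)=\sigma_T(\ell)\widehat f(\ell)$. Hence (C) reduces to the vanishing of the off-diagonal contribution $P_\ell\,T\big((I-P_\ell)f\big)=0$, and this is the hard part: since $(I-P_\ell)f=\sum_{j\ne\ell}P_jf$ is in general an infinite series, one cannot apply $T$ term by term from (A) alone without interchanging $T$ with the limit. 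I would justify this interchange from the convergence $\sum_jP_jf=f$ together with the continuity of $T$ on $\mathcal H^{\infty}$ valid for the operators considered here (where $\mathcal H^{\infty}$ carries its natural Fr\'echet topology, e.g. $\mathcal H^{\infty}=C^\infty(M)$), so that $Tf=\sum_jT(P_jf)$ and therefore $P_\ell Tf=T(P_\ell f)$; equivalently, the commutation $P_\ell T=TP_\ell$ is precisely condition (D), which yields (A)--(C) at once and bypasses the limiting argument. Finally, (D) $\Rightarrow$ (A) is immediate, since for $x\in H_j$ one has $Tx=TP_jx=P_jTx\in H_j$; and if $T\in\mathscr L(\mathcal H)$ is bounded, continuity upgrades (A) to $T\big(\overline{\bigoplus_{j\ne\ell}H_j}\big)\subset\overline{\bigoplus_{j\ne\ell}H_j}\perp H_\ell$, whence $P_\ell T(I-P_\ell)=0$ and $P_\ell T=P_\ell TP_\ell=TP_\ell$, giving (D) and closing the equivalence in the bounded case.
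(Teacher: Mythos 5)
Your treatment of (A)$\Leftrightarrow$(B), of (C)$\Rightarrow$(B) together with the coincidence of the matrices, of (D)$\Rightarrow$(A)--(C), and of (A)$\Rightarrow$(D) for bounded $T$ is correct and is the standard argument. Note that the present paper does not actually prove Theorem \ref{THM:inv-rem}; it recalls it from \cite{FourierHilbert,DelRuzTrace1}, so there is no in-text proof to compare against. You have also correctly isolated the crux, namely (A)$\Rightarrow$(C), and correctly diagnosed why it is delicate: one must commute $T$ with the infinite sum $\sum_{j\neq\ell}P_jf$.

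The gap is in how you close that step. You invoke ``the continuity of $T$ on $\mathcal{H}^\infty$'' with respect to a natural Fr\'echet topology, but in the statement $\mathcal{H}^\infty$ is an arbitrary dense linear subspace of an abstract Hilbert space and $T$ is an arbitrary linear map; no topology on $\mathcal{H}^\infty$ and no continuity of $T$ are among the hypotheses, and even if they were, you would additionally need $\sum_jP_jf\to f$ in that topology, not merely in $\mathcal{H}$. So what you prove is a variant of the theorem carrying an extra hypothesis. The issue is not cosmetic: take $\mathcal{H}=\ell^2(\mathbb{N}_0)$ with $d_j=1$ and $H_j=\mathbb{C}e_j$, set $v=\sum_j2^{-j}e_j$, and let $\mathcal{H}^\infty$ be the algebraic span of $\{e_j\}_j\cup\{v\}$ (a dense subspace containing all basis vectors); defining $Te_j=0$ and $Tv=e_0$ and extending linearly gives a linear $T:\mathcal{H}^\infty\to\mathcal{H}$ satisfying (A) and (B) with $\sigma_T\equiv0$ but violating (C) at $f=v$, since $\widehat{Tv}(0)=1\neq0$. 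Hence (A)$\Rightarrow$(C) cannot be established for a bare linear $T$ on a bare dense subspace; some additional structure is genuinely needed --- your continuity assumption, or condition (D), or boundedness of $T$, or the requirement that $\mathcal{H}^\infty$ be the algebraic span of the $e_j^k$ (in which case every $f$ is a finite sum and (B)$\Rightarrow$(C) is immediate by linearity). Your write-up should state explicitly that such a hypothesis is being added, rather than presenting it as automatically ``valid for the operators considered here.''
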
 
\begin{remark}
   Under the assumptions of Theorem \ref{THM:inv-rem}, we have the direct sum 
decomposition
\begin{equation}\label{EQ:sum}
\mathcal{H} = \bigoplus_{j=0}^{\infty} H_{j},\quad H_{j}={\rm span} \{e_{j}^{k}\}_{k=1}^{d_{j}},
\end{equation}
and we have $d_{j}=\dim H_{j}.$ 
\end{remark}
\begin{remark}In terms of the notation of Theorem \ref{THM:inv-rem}, for any $f\in \mathcal{H},$ the Fourier transform  
\begin{equation}
    \widehat{f}:\mathbb{N}_0\rightarrow\bigcup_{\ell \in \mathbb{N}_0}\mathbb{C}^{d_\ell\times d_\ell},\,\,\widehat{f}(j)=((f,e_{j}^{1})_{\mathcal{H}},\cdots, (f,e_{j}^{k})_{\mathcal{H}},\cdots,(f,e_{j}^{d_j})_{\mathcal{H}})^{T},
\end{equation} relative to the subspace  decomposition $\{H_j\}_{j\in \mathbb{N}_0
}$ admits the Fourier inversion formula
\begin{equation}\label{Fourier:Inversion:Formula}
    f=\sum_{\ell\in \mathbb{N}_0}(\widehat{f}(\ell),e_\ell)_{\mathbb{C}^{d_\ell}},\,\,
\end{equation}where $(x,y)\mapsto (x,y)_{\mathbb{C}^{d_\ell}}$ denotes the standard inner product on $\mathbb{C}^{d_\ell},$ and each   $e_\ell$ is the column vector
\begin{equation}
   e_\ell=(e^{1}_\ell,\cdots,e^{k}_\ell,\cdots, e^{d_\ell}_\ell )^{T}.
\end{equation}Note that the Plancherel formula takes the form
\begin{equation}\label{Plancherel:H}
 \forall f\in \mathcal{H},\,\,   \Vert f \Vert_{\mathcal{H}}^2=\sum_{\ell\in \mathbb{N}_0}\Vert \widehat{f}(\ell)\Vert^2_{\mathbb{C}^{d_\ell}}.
\end{equation}    
\end{remark}
\begin{remark}
    The two applications that we will consider will be with $\mathcal{H}=L^{2}(M)$ for a
compact manifold $M$ with $H_{j}$ being the eigenspaces of an elliptic classical
pseudo-differential operator $E$, or with $\mathcal{H}=L^{2}(G)$ for a compact Lie group
$G$ with $$ H_{j}=\textrm{span}\{\xi_{km}\}_{1\leq k,m\leq d_{\xi}}$$  for a
unitary irreducible representation $\xi\in[\xi_{j}]\in\widehat{G}$. The difference
is that in the first case we will have the eigenvalues of $E$ corresponding
to $H_{j}$'s are all distinct, while in the second case the eigenvalues of the Laplacian
on $G$ for which $H_{j}$'s are the eigenspaces, may coincide.
\end{remark}

\begin{definition}\label{Fourier:H}
    In view of properties (A) and (C), respectively, an operator $T$ satisfying any of
the equivalent properties (A)--(C) in
Theorem \ref{THM:inv-rem}, will be called an {\it  invariant operator}, or
a {\it  Fourier multiplier relative to the decomposition
$\{H_{j}\}_{j\in\mathbb{N}_{0}}$} in \eqref{EQ:sum}.
If the collection $\{H_{j}\}_{j\in\mathbb{N}_{0}}$
is fixed once and for all, we can just say that $T$ is {\it  invariant}
or a {\it  Fourier multiplier}. The family of matrices $\sigma$ will be called the {\it  matrix symbol of $T$ relative to the partition $\{H_{j}\}$ and to the
basis $\{e_{j}^{k}\}$}.
\end{definition}
\begin{remark}By following the notations in Definition \ref{Fourier:H}, in view of the Fourier inversion formula in \eqref{Fourier:Inversion:Formula}, we have the matrix-valued quantisation formula
 \begin{equation}\label{Quantisation:Formula}
    Tf=\sum_{\ell\in \mathbb{N}_0}(\sigma_T(\ell)\widehat{f}(\ell),e_\ell)_{\mathbb{C}^{d_\ell}},\,\,f\in \mathcal{H}^\infty.
\end{equation}   
\end{remark}
As a consequence of Theorem \ref{THM:inv-rem}, we have the following construction of global matrix-valued symbols on compact manifolds without boundary.
\begin{theorem}[Fourier multipliers on compact manifolds] Let $M$ be a closed manifold. Consider $E\in \Psi^\nu_{cl,+}(M)$ be a classical positive pseudo-differential operator of order $\nu>0$ on $M.$  Let
\begin{itemize}
    \item $H_{j}=\textnormal{Ker}(E-\lambda_jI)$ be the family of eigenspaces of $E$,
    \item $P_{j}:L^2(M)\to H_{j}$ be the corresponding orthogonal projections,
    \item  $\{d_{j}\}_{j\in\mathbb{N}_{0}}\subset\mathbb{N}$ be the sequence formed by the dimensions of each $H_j,$
    \item and assume that
$\mathcal{B}=\{e_{j}^{k}\}_{j\in\mathbb{N}_{0}, 1\leq k\leq d_{j}}$ is an
orthonormal basis of $L^2(M),$ where each $H_j$ is spanned by the basis ${\rm span} \{e_{j}^{k}\}_{k=1}^{d_{j}}.$
\end{itemize}
For $f\in L^2(M)$, we denote by $\widehat{f}(j,k):=(f,e_{j}^{k})_{\mathcal{H}}$ the Fourier coefficients of $f$ relative to the basis $\mathcal{B}.$  Let
$$ \widehat{f}(j)\in \mathbb{C}^{d_{j}}$$  denote the column of $\widehat{f}(j,k)$, $1\leq k\leq d_{j}.$
Let $T:C^{\infty}(M)\to L^2(M)$ be a  linear operator.
Then the following
conditions are equivalent:
\begin{itemize}
\item[(A)] For each $j\in\mathbb{N}_0$, we have $T(H_j)\subset H_j$. 
\item[(B)] For each $\ell\in\mathbb{N}_0$ there exists a matrix 
$\sigma_{T}(\ell)\in\mathbb{C}^{d_{\ell}\times d_{\ell}}$ such that for all $e_j^k$ 
$$
\widehat{Te_j^k}(\ell,m)=\sigma_{T}(\ell)_{mk}\delta_{j\ell}.
$$
\item[(C)]  For each $\ell\in\mathbb{N}_0 $ there exists a matrix 
$\sigma_{T}(\ell)\in\mathbb{C}^{d_{\ell}\times d_{\ell}}$ such that
 \[\widehat{Tf}(\ell)=\sigma_{T}(\ell)\widehat{f}(\ell)\]
 for all $f\in C^{\infty}(M).$
\end{itemize}

The matrices $\sigma_{T}(\ell)$ in {\rm (B)} and {\rm (C)} coincide.

\noindent The equivalent properties {\rm (A)--(C)} follow from the condition 
\begin{itemize}
\item[(D)] For each $j\in\mathbb{N}_0$, we have
$TP_j=P_jT$ on $\mathcal{H}^{\infty}$.
\end{itemize}
If, in addition, $T$ extends to a bounded operator
$T\in{\mathscr L}(L^2(M))$ then {\rm (D)} is equivalent to {\rm (A)--(C)}.
\end{theorem}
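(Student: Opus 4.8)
The plan is to recognise this statement as the specialisation of the abstract Theorem \ref{THM:inv-rem} to the concrete Hilbert space $\mathcal{H}=L^2(M)$, with dense subspace $\mathcal{H}^\infty=C^\infty(M)$ and with the orthonormal basis furnished by an eigenbasis of $E$. Once the hypotheses of Theorem \ref{THM:inv-rem} are verified in this setting, the equivalence of (A)--(C), the coincidence of the symbols in (B) and (C), and the implication (D)$\Rightarrow$(A)--(C) together with its converse under boundedness, follow \emph{verbatim}: these conclusions are formal consequences of the direct sum decomposition $\mathcal{H}=\bigoplus_j H_j$ and do not depend on the analytic nature of $E$. Thus the genuine content of the proof is entirely the verification that a positive classical elliptic operator $E\in\Psi^\nu_{cl,+}(M)$, $\nu>0$, provides a legitimate input for the abstract theorem.

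Accordingly, first I would recall the spectral properties of such $E$. Being positive and classical elliptic of positive order on a closed manifold, $E$ is self-adjoint on $L^2(M)$ (with domain the Sobolev space $H^\nu(M)$) and has compact resolvent; hence by the spectral theorem its spectrum is discrete, consisting of eigenvalues $\lambda_0\le\lambda_1\le\cdots\to+\infty$ with finite-dimensional eigenspaces $H_j=\mathrm{Ker}(E-\lambda_j I)$, so that $d_j=\dim H_j<\infty$. This yields the sequence $\{d_j\}_{j\in\mathbb{N}_0}\subset\mathbb{N}$ and the orthogonal decomposition $L^2(M)=\bigoplus_j H_j$ required in \eqref{EQ:sum}. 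The existence of this spectral data for positive elliptic classical operators is classical and can be invoked from the work of Seeley \cite{seeley1,seeley2}.

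Next I would check the two remaining hypotheses concerning the basis. By elliptic regularity, every eigenfunction of $E$ is smooth, so the orthonormal eigenbasis satisfies $e_j^k\in C^\infty(M)=\mathcal{H}^\infty$, which is exactly the requirement of Theorem \ref{THM:inv-rem} that the basis lie in the dense subspace. Completeness of $\{e_j^k\}$ in $L^2(M)$ is again the content of the spectral theorem for the self-adjoint operator $E$ with compact resolvent, and $C^\infty(M)$ is dense in $L^2(M)$. The Fourier coefficients $\widehat{f}(j,k)=(f,e_j^k)_{\mathcal{H}}$ and the column vectors $\widehat{f}(j)\in\mathbb{C}^{d_j}$ are then precisely the objects appearing in Theorem \ref{THM:inv-rem}. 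With all hypotheses in place, applying that theorem to $T\colon C^\infty(M)\to L^2(M)$ delivers the asserted equivalences.

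The one point requiring care, and the only (mild) obstacle, is that the abstract theorem presupposes a basis adapted to a partition in which each index $j$ labels a distinct subspace $H_j$; here, since the $\lambda_j$ are taken pairwise distinct (as stressed in the earlier remark distinguishing the manifold case from the compact Lie group case), no further grouping is needed and the partition $\{H_j\}$ coincides with the genuine eigenspace decomposition of $E$. I would therefore emphasise that the analytic input is concentrated entirely in the discreteness of the spectrum, the finiteness of the $d_j$, and the smoothness and completeness of the eigenfunctions, whereas the algebraic equivalence (A)--(D) is inherited directly and without modification from Theorem \ref{THM:inv-rem}.
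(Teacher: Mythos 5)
Your proposal is correct and coincides with the paper's own treatment: the paper states this result simply as a consequence of Theorem \ref{THM:inv-rem}, obtained by specialising to $\mathcal{H}=L^2(M)$, $\mathcal{H}^\infty=C^\infty(M)$, and the eigenbasis of $E$. Your verification of the hypotheses (discrete spectrum, finite-dimensional eigenspaces, smoothness of eigenfunctions by elliptic regularity, completeness of the eigenbasis) supplies exactly the spectral facts the paper implicitly invokes via Seeley's work.
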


\begin{remark} Let $A,B:\mathcal{H}^\infty\rightarrow\mathcal{H}$ be Fourier multipliers. Assume that $A$ is the generator of a $C_0$-semigroup $S(t),$ that is
\begin{equation}
    \forall v\in \mathcal{H}^\infty,\,\,Av=\lim_{t\rightarrow 0}\frac{1}{t}\left(S(t)v-v\right),
\end{equation}where the limit is taken with respect to the norm on  $\mathcal{H}$. In this case one has that
    \begin{equation}\label{symbolproperties:b}
        \forall \ell \in \mathbb{N},\,\,\,\sigma_{S(t)}(\ell)=e^{t\sigma_A(\ell)} \textnormal{ and }\sigma_{B^\ast}(\ell)=\sigma_{B}(\ell)^*.
    \end{equation}
\end{remark}

\subsection{Abstract control theory} In this section we will present some results about the controllability of a general control system of the form
\begin{equation}\label{Maingeneral}
    \displaystyle\frac{du}{dt}=Au+Bv(t),\ t\in[0,T],
\end{equation}
where $A$ and $B$ are continuous linear $\mathcal{H}$-valued operators defined on dense subspaces of Hilbert spaces $\mathcal{H}$ and $\mathcal{V}$, respectively, and $A$ is the generator of a strongly continuous semigroup $S(t),\ t>0.$ For this we will follow J. M. Coron \cite[Chapter IV]{CoronBookcontrol}.

Let us begin by specifying the definition of controllability.
\begin{definition}
   The system \eqref{Maingeneral} is controllable in time $T>0$ if, for every $u_0,u_T\in D(A)$, there exists an input (or control) map $v:[0,T]\rightarrow D(B)$ such that the solution $u$ of the Cauchy problem
   \begin{equation*}
       \begin{cases}\displaystyle\frac{d u}{dt}=Au+Bv,& 
    \\
\\u(0)=u_{0}, \end{cases}
   \end{equation*}
   reaches $u_T$ at time $T$, that is, $u(T)=u_T.$
\end{definition}
There is an extensive bibliography dedicated to the study of the controllability of the system \eqref{Maingeneral}. We refer, for instance, to \cite{Russell} and \cite{Zuazuasurvey}. A large number of analytic and numerical methods have been developed and used in different contexts, for example, R. Kálmán proposed and proved a criterion to determine whether a finite-dimensional linear system is controllable. This criterion is presented in the following theorem.
\begin{theorem}[Kalman's criterion \cite{KalOptimal}]\label{Kalmantheorem} If $\mathcal{H}$ and $\mathcal{V}$ have finite dimensions $n$ and $m$ respectively, then the system \eqref{Maingeneral} is controllable in time $T>0$ if and only if \begin{equation}\label{kalmancondition}\textnormal{rank}\left[B,AB,\cdots,A^{n-1}B\right]=n.\end{equation}
\end{theorem}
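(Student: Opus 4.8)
The plan is to reduce the controllability of \eqref{Maingeneral} to an algebraic condition on the matrices $A$ and $B$ by combining the variation-of-constants (Duhamel) formula with the \emph{controllability Gramian}. First I would write the solution of the Cauchy problem explicitly as
\[
u(T)=e^{TA}u_0+\int_0^T e^{(T-s)A}Bv(s)\,ds ,
\]
so that, since $e^{TA}u_0$ is determined by $u_0$ while $u_T-e^{TA}u_0$ ranges over all of $\mathbb{C}^n$ as $u_T$ does, controllability in time $T$ is equivalent to the surjectivity onto $\mathbb{C}^n$ of the linear map
\[
L_T:v\longmapsto \int_0^T e^{(T-s)A}Bv(s)\,ds
\]
defined on the space of admissible controls.

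Next I would introduce the positive semidefinite $n\times n$ matrix
\[
W_T:=\int_0^T e^{sA}BB^*e^{sA^*}\,ds
\]
and show that the range of $L_T$ coincides with the range of $W_T$, so that $L_T$ is surjective if and only if $W_T$ is invertible. The nontrivial implication (invertibility of $W_T$ yields surjectivity) is handled by exhibiting the explicit control $v(s)=B^*e^{(T-s)A^*}W_T^{-1}(u_T-e^{TA}u_0)$, which upon direct substitution satisfies $\int_0^T e^{(T-s)A}Bv(s)\,ds=W_TW_T^{-1}(u_T-e^{TA}u_0)$ and hence steers $u_0$ to $u_T$; the converse follows because any $\eta$ in the kernel of the Hermitian matrix $W_T$ is orthogonal to the range of $L_T$.

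The heart of the argument is then to prove that $W_T$ is invertible if and only if the Kalman rank condition \eqref{kalmancondition} holds, and I would argue this by contraposition. The matrix $W_T$ is singular precisely when there is a nonzero $\eta$ with $\eta^*W_T\eta=\int_0^T\lvert B^*e^{sA^*}\eta\rvert^2\,ds=0$, which forces the entire, hence real-analytic, vector-valued function $s\mapsto B^*e^{sA^*}\eta$ to vanish on $[0,T]$ and therefore to have all its derivatives at $s=0$ equal to zero, i.e. $B^*(A^*)^k\eta=0$, equivalently $\eta^*A^kB=0$, for every $k\geq 0$. By the Cayley--Hamilton theorem the powers $A^k$ with $k\geq n$ are linear combinations of $I,A,\dots,A^{n-1}$, so this family of constraints collapses to $\eta^*A^kB=0$ for $k=0,1,\dots,n-1$, which says exactly that $\eta$ is orthogonal to the columns of $[B,AB,\dots,A^{n-1}B]$. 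Such a nonzero $\eta$ exists if and only if the rank of this matrix is strictly less than $n$, completing the equivalence.

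The main obstacle I anticipate is the passage from the integral vanishing condition to the finite algebraic rank condition: one must justify that the vanishing of the analytic function on the subinterval $[0,T]$ propagates to the vanishing of all its Taylor coefficients, and then invoke Cayley--Hamilton to truncate the a priori infinite list of conditions $B^*(A^*)^k\eta=0$ to the finitely many appearing in \eqref{kalmancondition}. A pleasant byproduct of this analysis is that the criterion is independent of the horizon: the rank condition governs controllability in \emph{every} positive time $T$.
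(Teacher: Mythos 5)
Your proof is correct: the reduction of controllability to surjectivity of the input-to-state map via Duhamel's formula, the identification of its range with that of the Gramian $W_T=\int_0^T e^{sA}BB^*e^{sA^*}\,ds$ (with the explicit steering control in one direction and the orthogonality of $\ker W_T$ to the reachable set in the other), and the passage from $\eta^*W_T\eta=0$ to $\eta^*A^kB=0$ for all $k$ by analyticity, truncated to $k\le n-1$ by Cayley--Hamilton, is the standard and complete argument. Note, however, that the paper does not prove this statement at all: it records Kalman's criterion as a classical result with a citation to K\'alm\'an's 1960 paper and uses it as a black box, so there is no in-paper proof to compare against. Your write-up would serve as a self-contained justification of that black box; the only point worth making explicit is that the equivalence between the infinite family of conditions $\eta^*A^kB=0$, $k\ge 0$, and the finite family $k=0,\dots,n-1$ is needed in \emph{both} directions (Cayley--Hamilton gives the nontrivial one), which your phrase ``collapses to'' correctly, if tersely, encodes.
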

Equality \eqref{kalmancondition} is called the {\it rank Kalman condition} and we can observe that it does not depend on the time $T$ so, in particular, the Kalman's criterion implies that in finite dimension, the system is controllable in any time if it is controllable in some time $T>0$.

Another useful tool to determine whether or not a system is controllable in a time $T>0$ is given by the next theorem. For a proof of this theorem, see, for instance \cite[p. 57]{CoronBookcontrol}.
\begin{theorem}[Observality criterion]\label{observabletheorem}
    The system \eqref{Maingeneral} is controllable at a  time $T>0$ if and only if there exists a constant $c_T>0$ such that
    \begin{equation}\label{generalobservableinequality}
    \displaystyle\int\limits_0^T||B^*S(t)^*z||^2_{\mathcal{V}}dt\geq c_T^2||z||^2_{\mathcal{H}},\ \forall z\in D(A^*),
    \end{equation}
    where $D(A^*)$ denotes the domain of the adjoint operator $A^*$ of $A.$
\end{theorem}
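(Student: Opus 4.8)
The plan is to prove this equivalence by the duality method underlying the Hilbert Uniqueness Method of Lions, which recasts exact controllability as the surjectivity of a single bounded operator and then dualises surjectivity into a coercivity estimate for its adjoint. First I would represent the mild solution of the Cauchy problem through the variation-of-constants (Duhamel) formula
$$u(T)=S(T)u_0+\int_0^T S(T-t)Bv(t)\,dt,$$
and introduce the input-to-final-state operator $L_T v:=\int_0^T S(T-t)Bv(t)\,dt$, viewed as a bounded map from the control space $L^2([0,T];\mathcal{V})$ into $\mathcal{H}$. Since $u(T)=S(T)u_0+L_Tv$, steering $u_0$ to $u_T$ in time $T$ amounts to solving $L_Tv=u_T-S(T)u_0$; as $u_0,u_T$ range over $D(A)$ the right-hand side sweeps out a dense set, and the statement of controllability is thereby equivalent to the surjectivity of $L_T$ onto $\mathcal{H}$.

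Next I would invoke the standard functional-analytic fact that a bounded operator $L$ between Hilbert spaces is surjective if and only if its adjoint is bounded below, i.e. there is $c_T>0$ with $\|L^*z\|\geq c_T\|z\|$ for all $z$. This follows from the closed range theorem: surjectivity of $L$ is equivalent to $\ker L^*=\{0\}$ together with closedness of $\operatorname{Ran}(L^*)$, and for Hilbert spaces these two conditions together are exactly the lower bound. Applied to $L=L_T$, the problem reduces to computing $L_T^*$. A direct computation using the adjoint relations for the semigroup and for $B$ gives, for $z\in\mathcal{H}$,
$$\langle L_Tv,z\rangle_{\mathcal{H}}=\int_0^T \langle v(t),\,B^*S(T-t)^*z\rangle_{\mathcal{V}}\,dt,$$
so that $(L_T^*z)(t)=B^*S(T-t)^*z$. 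After the change of variables $t\mapsto T-t$ the coercivity estimate $\|L_T^*z\|_{L^2([0,T];\mathcal{V})}^2\geq c_T^2\|z\|_{\mathcal{H}}^2$ becomes precisely the observability inequality \eqref{generalobservableinequality}, and the equivalence follows.

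The main obstacle lies not in the duality itself but in the unboundedness of $A$ and $B$ and the attendant domain questions. One must justify that the Duhamel formula produces genuine solutions, that $L_T$ is in fact a well-defined bounded operator from $L^2([0,T];\mathcal{V})$ to $\mathcal{H}$ (an admissibility hypothesis on the control operator $B$ relative to the semigroup $S(t)$), and that the adjoint is correctly represented by $z\mapsto B^*S(T-\cdot)^*z$ on the dense domain $D(A^*)$. Because the observability inequality is asserted only for $z\in D(A^*)$, the argument is closed by a density step: the estimate on $D(A^*)$ together with the boundedness of $z\mapsto B^*S(\cdot)^*z$ extends to the bounded-below statement for $L_T^*$ on all of $\mathcal{H}$. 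Matching the paper's definition of controllability (reaching targets in $D(A)$) with surjectivity of $L_T$ onto $\mathcal{H}$ also requires care, since a dense range alone yields only approximate controllability; it is precisely the quantitative constant $c_T>0$ that upgrades this to exact controllability, and these technical points constitute the heart of the rigorous proof in the infinite-dimensional setting.
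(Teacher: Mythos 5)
The paper does not actually prove this theorem: it is quoted as a known result with the proof deferred to Coron's book \cite[p.~57]{CoronBookcontrol}, and your duality argument --- the input-to-final-state operator $L_T$, the equivalence of exact controllability with surjectivity of $L_T$, the closed range theorem turning surjectivity into a lower bound for $L_T^{*}$, and the identification $(L_T^{*}z)(t)=B^{*}S(T-t)^{*}z$ followed by the change of variables $t\mapsto T-t$ --- is precisely the standard proof given there. Your outline is correct in approach, and you rightly flag the one genuine technical point the paper glosses over, namely reconciling controllability for targets in $D(A)$ (a dense but possibly meager set of right-hand sides for $L_T$) with surjectivity of $L_T$ onto all of $\mathcal{H}$.
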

Inequality \eqref{generalobservableinequality} is  usually called the {\it observability inequality} for the system \eqref{Maingeneral} and we will use it later in the proof of our main theorem.
\begin{remark} Let $c({T})>0$ be  the supremum of the constants $c_T>0$ satisfying  the observability inequality \eqref{generalobservableinequality}.
    By following the standard terminology of the control theory, the constant $$ \boxed{\mathscr{C}_{T}:={1}/{c({T})}}$$   is called the {\it controllability cost} of the controllable system \eqref{Maingeneral}.
\end{remark}

\subsection{Left-invariant operators on compact Lie groups} In order to record the equivalence between Fourier multipliers and left-invariant operators let us start with some basics about the Fourier analysis of a compact Lie group.

Let $dx$ be the Haar measure on a compact Lie group $G.$  
  The Hilbert space $L^2(G)$ will be endowed with
   the inner product $$ (f,g)=\int\limits_{G}f(x)\overline{g(x)}dx.$$
   According to the Peter-Weyl theorem the spectral decomposition of $L^2(G)$ can be done in terms of the entries of unitary representations on a compact Lie group $G$. To present such a theorem we will give some preliminaries.
\begin{definition}[Unitary representation of a compact Lie group]
    A continuous and unitary representation of  $G$ on $\mathbb{C}^{\ell}$ is any continuous mapping $\xi\in\textnormal{Hom}(G,\textnormal{U}(\ell)) ,$ where $\textnormal{U}(\ell)$ is the Lie group of unitary matrices of order $\ell\times \ell.$ The integer number $\ell=\dim_{\xi}$ is called the dimension of the representation $\xi$ since it is the dimension of the representation space $\mathbb{C}^{\ell}.$
\end{definition}
\begin{remark}[Irreducible representations] A subspace $W\subset \mathbb{C}^{d_\xi}$ is called $\xi$-invariant if for any $x\in G,$ $\xi(x)(W)\subset W,$ where $\xi(x)(W):=\{\xi(x)v:v\in W\}.$ The representation $\xi$ is irreducible if its only invariant subspaces are $W=\emptyset$ and $W=\mathbb{C}^{d_\xi},$ the trivial ones. On the other hand, 
any unitary representation $\xi$ is a direct sum of unitary irreducible representations. We denote it by $\xi=\xi_1\oplus \cdots\oplus \xi_j,$ with $\xi_i$ being irreducible representations on factors $\mathbb{C}^{d_{\xi_i}}$ that decompose the representation space $$ \mathbb{C}^{d_{\xi}}=\mathbb{C}^{d_{\xi_1}}\oplus \cdots \oplus\mathbb{C}^{d_{\xi_j}}.$$ 
\end{remark}
\begin{definition}[Equivalent representations]
    Two unitary representations $$ \xi\in \textnormal{Hom}(G,\textnormal{U}(d_\xi)) \textnormal{ and  }\eta\in \textnormal{Hom}(G,\textnormal{U}(d_\eta))$$  are equivalent if there exists a linear invertible map $S:\mathbb{C}^{d_\xi}\rightarrow \mathbb{C}^{d_\eta}$ such that for any $x\in G,$ $S\xi(x)=\eta(x)S.$ The mapping $S$ is called an intertwining operator between $\xi$ and $\eta.$ The set of all the intertwining operators between $\xi$ and $\eta$ is denoted by $\textnormal{Hom}(\xi,\eta).$
\end{definition}
\begin{remark}[Schur Lemma]
    In view of the 1905's Schur lemma, if $\xi\in \textnormal{Hom}(G,\textnormal{U}(d_\xi)) $ is irreducible, then $\textnormal{Hom}(\xi,\xi)=\mathbb{C}I_{d_\xi}$ is formed by scalar multiples of the identity matrix  $I_{d_\xi}$ of order $d_\xi.$

\end{remark}
\begin{definition}[The unitary dual]
    The relation $\sim$ on the set of unitary representations $\textnormal{Rep}(G)$ defined by: {\it $\xi\sim \eta$ if and only if $\xi$ and $\eta$ are equivalent representations,} is an equivalence relation. The quotient 
$$
    \widehat{G}:={\textnormal{Rep}(G)}/{\sim}
$$is called the unitary dual of $G.$
\end{definition}
The unitary dual encodes all the Fourier analysis on the group. The Fourier transform is defined as follows.
\begin{definition}[Group Fourier transform]
    If $\xi\in \textnormal{Rep}(G),$ the Fourier transform $\mathscr{F}_{G}$ associates to any $f\in C^\infty(G)$ a matrix-valued function $\mathscr{F}_{G}f$ defined on $\textnormal{Rep}(G)$ as follows
$$ (\mathscr{F}_{G}f)(\xi) \equiv   \widehat{f}(\xi)=\int\limits_Gf(x)\xi(x)^{*}dx,\,\,\xi\in \textnormal{Rep}(G). $$ 
\end{definition}
\begin{remark}[The Fourier inversion formula on a compact Lie group]
The discrete Schwartz space $\mathscr{S}(\widehat{G}):=\mathscr{F}_{G}(C^\infty(G))$ is the image of the Fourier transform on the class of smooth functions. This operator admits a unitary extension from $L^2(G)$ into $\ell^2(\widehat{G}),$ with 
\begin{equation}
 \ell^2(\widehat{G})=\left\{\phi:\forall [\xi]\in \widehat{G},\,\phi(\xi)\in \mathbb{C}^{d_\xi\times d_\xi}\textnormal{ and }\Vert \phi\Vert_{\ell^2(\widehat{G})}:=\left(\sum_{[\xi]\in \widehat{G}}d_{\xi}\Vert\phi(\xi)\Vert_{\textnormal{HS}}^2\right)^{\frac{1}{2}}<\infty \right\}.   
\end{equation} The norm $\Vert\phi(\xi)\Vert_{\textnormal{HS}}$ is the standard Hilbert-Schmidt norm of matrices. The Fourier inversion formula takes the form
\begin{equation}
    f(x)=\sum_{[\xi]\in \widehat{G}} d_{\xi}\textnormal{Tr}[\xi(x)\widehat{f}(\xi)],\,f\in L^2(G),
\end{equation}where the summation is understood in the sense that from any equivalence class $[\xi]$ we choose randomly a unitary representation.      
\end{remark}
 \begin{remark}
The Plancherel theorem for the group Fourier transform takes the form
\begin{equation}\label{Plancherel}
         \forall f\in L^2(G),\,\,\, \Vert f\Vert_{L^2(G)}=\left(\sum_{[\xi]\in \widehat{G}} d_{\xi}\Vert\widehat{f}(\xi)\Vert_{\textnormal{HS}}^2\right)^{\frac{1}{2}}.
\end{equation}
\end{remark}

 Let  $A:C^\infty(G)\rightarrow C^\infty(G)$ be a continuous linear operator with respect to the standard Fr\'echet structure on $C^\infty(G).$ There is a way of associating to the operator $A$ a matrix-valued function $\sigma_A$ defined on the non-commutative phase space $G\times \widehat{G}$ to rewrite the operator $A$ in terms of the Fourier inversion formula and in terms of the Fourier transform. Such an expression is called the dequantisation formula. To introduce it we require the following definition.
\begin{definition}[Right convolution kernel of an operator]
 The Schwartz kernel theorem associates to $A$ a kernel $K_A\in \mathscr{D}'(G\times G)$ such that
$$   Af(x)=\int\limits_{G}K_{A}(x,y)f(y)dy,\,\,f\in C^\infty(G).$$ The distribution defined via $R_{A}(x,y):=K_A(x,xy^{-1})$ that provides the convolution identity
$$   Af(x)=\int\limits_{G}R_{A}(x,y^{-1}x)f(y)dy,\,\,f\in C^\infty(G),$$
is called the right-convolution kernel of $A.$  
\end{definition}

\begin{remark}[The dequantisation formula]
 Now, we will associate  a global symbol $\sigma_A:G\times \textnormal{Rep}(G)\rightarrow \cup_{\ell\in \mathbb{N}}\mathbb{C}^{\ell\times \ell}$ to $A.$ Indeed, for a given $x_0\in G$, we can consider the continuous linear operator $A_{x_0}:C^{\infty}(G)\rightarrow C^{\infty}(G)$ defined by $$A_{x_0}f(x)=\int\limits_GR_A(x_0,y^{-1}x)f(y)dy=(f\ast R_A(x_0,\cdot))(x),$$ and after taking the Fourier transform we get 
 $$ \widehat{A_{x_0}f}(\xi)= \widehat{R_{A}(x_0,\cdot)}(\xi)\widehat{f}(\xi). $$ Then, the Fourier inversion formula gives the following representation of the operator $A_{x_0}$ in terms of the Fourier transform,
 \begin{equation}\label{Quantisation:formula}
     A_{x_0}f(x)=\sum_{[\xi]\in \widehat{G}}d_\xi\textnormal{Tr}[\xi(x)\widehat{R_{A}(x_0,\cdot)}(\xi)\widehat{f}(\xi)],\,f\in C^\infty(G),
 \end{equation}
 and, therefore,
  \begin{equation}\label{Quantisation:formula1}
     Af(x)=A_{x}f(x)=\sum_{[\xi]\in \widehat{G}}d_\xi\textnormal{Tr}[\xi(x)\widehat{R_{A}(x,\cdot)}(\xi)\widehat{f}(\xi)],\,f\in C^\infty(G).
 \end{equation}
 We define the symbol of $A$ at $(x,\xi)\in G\times \textnormal{Rep}(G)$ as
 \begin{equation}
    \sigma_A(x,\xi):=\widehat{R_{A}(x,\cdot)}(\xi),
 \end{equation}so that
\begin{equation}\label{Quantisation:formula2}
     Af(x)=\sum_{[\xi]\in \widehat{G}}d_\xi\textnormal{Tr}[\xi(x)\sigma_{A}(x,\xi)\widehat{f}(\xi)],\,f\in C^\infty(G).
 \end{equation}The formula \eqref{Quantisation:formula2} is independent of the choice of the representation $\xi$ from any equivalent class $[\xi]\in \widehat{G}.$ This is a consequence of the Fourier inversion formula.
\end{remark}
  In the following quantisation theorem, we observe that the distribution $\sigma_A$ in \eqref{Quantisation:formula2} is unique and can be written in terms of the operator $A,$ see Theorems  10.4.4 and 10.4.6 of \cite[Pages 552-553]{Ruz}.
\begin{theorem}\label{The:quantisation:thm}
    Let $A:C^\infty(G)\rightarrow C^\infty(G) $ be a continuous linear operator. The following statements are equivalent.
    \begin{itemize}
        \item The distribution $\sigma_A:G\times \widehat{G}\rightarrow \cup_{\ell\in \mathbb{N}}\mathbb{C}^{\ell\times \ell}$ satisfies the quantisation formula
        \begin{equation}\label{Quantisation:3}
            \forall f\in C^\infty(G),\,\forall x\in G,\,\, Af(x)=\sum_{[\xi]\in \widehat{G}}d_\xi\textnormal{Tr}[\xi(x)\sigma_{A}(x,\xi)\widehat{f}(\xi)].
        \end{equation}
        \item $ 
            \forall (x,\xi)\in G\times \textnormal{Rep}(G),\, \sigma_{A}(x,\xi)=\widehat{R_{A}(x,\cdot)}(\xi).
        $ \\
        \item $ 
          \forall (x,\xi),\, \sigma_A(x,\xi)=\xi(x)^{*}A\xi(x),$ where $ A\xi(x):=(A\xi_{ij}(x))_{i,j=1}^{d_\xi}.  
        $ 
    \end{itemize}
\end{theorem}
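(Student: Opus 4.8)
The plan is to prove the three equivalences by taking the second statement as the anchor and closing the loop $(3)\Leftrightarrow(2)\Rightarrow(1)\Rightarrow(3)$. The equivalence $(2)\Leftrightarrow(3)$ is a direct computation in the Haar integral; $(2)\Rightarrow(1)$ is already contained in the dequantisation formula established just before the statement; and the implication $(1)\Rightarrow(3)$ encodes the uniqueness of the symbol, which I would extract by testing the quantisation formula against matrix coefficients.

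For $(2)\Leftrightarrow(3)$, I would compute the matrix $A\xi(x)$ entrywise. Starting from $A\xi(x)=\int_G R_A(x,y^{-1}x)\xi(y)\,dy$ and performing the substitution $z=y^{-1}x$, so that $y=xz^{-1}$ and $dy=dz$ by invariance of the Haar measure, the homomorphism property $\xi(xz^{-1})=\xi(x)\xi(z)^{*}$ together with unitarity of $\xi$ gives $A\xi(x)=\xi(x)\int_G R_A(x,z)\xi(z)^{*}\,dz=\xi(x)\,\widehat{R_A(x,\cdot)}(\xi)$. Multiplying on the left by $\xi(x)^{*}$ yields $\xi(x)^{*}A\xi(x)=\widehat{R_A(x,\cdot)}(\xi)$, which is exactly the identity relating the symbols in $(2)$ and $(3)$. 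Since this holds representative by representative, the well-definedness of both sides on the class $[\xi]$ follows from the representation-independence of the quantisation formula noted after \eqref{Quantisation:formula2}.

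For $(2)\Rightarrow(1)$, I would invoke the dequantisation computation preceding the statement: for fixed $x_0$ the operator $A_{x_0}f=f*R_A(x_0,\cdot)$ has group Fourier transform $\widehat{A_{x_0}f}(\xi)=\widehat{R_A(x_0,\cdot)}(\xi)\,\widehat{f}(\xi)$, so the Fourier inversion formula on $G$ yields the quantisation formula for $A_{x_0}$, and the specialisation $x_0=x$ produces \eqref{Quantisation:3} with $\sigma_A(x,\xi)=\widehat{R_A(x,\cdot)}(\xi)$.

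The remaining implication $(1)\Rightarrow(3)$ is the uniqueness statement, and it is where the real work lies. Assuming \eqref{Quantisation:3} holds for some symbol $\sigma_A$, I would test it on the matrix coefficients $f=\xi_{ij}$. By the Peter-Weyl orthogonality relations the Fourier transform $\widehat{\xi_{ij}}$ is supported on the single class $[\xi]$ and equals $d_\xi^{-1}$ times an elementary matrix, so only the $[\xi]$-term of the sum survives and the trace collapses to a single entry, giving $A\xi_{ij}(x)=[\xi(x)\sigma_A(x,\xi)]_{ij}$; assembling these identities produces $A\xi(x)=\xi(x)\sigma_A(x,\xi)$, hence $\sigma_A(x,\xi)=\xi(x)^{*}A\xi(x)$. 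The main obstacle will be the analytic bookkeeping of this last step: one must justify evaluating the a priori only distributional sum over $\widehat{G}$ term by term and interchanging it with the trace, and verify that the extracted symbol is genuinely independent of the representative chosen in $[\xi]$ and defines a distribution on $G\times\widehat{G}$. Once uniqueness is secured, the chain $(3)\Leftrightarrow(2)\Rightarrow(1)\Rightarrow(3)$ closes and all three statements are equivalent.
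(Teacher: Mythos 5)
Your proof is correct. Note that the paper itself does not prove this theorem: it is quoted from Theorems 10.4.4 and 10.4.6 of \cite{Ruz}, and your argument is essentially the standard one given there — the change of variables $z=y^{-1}x$ in the right-convolution kernel to identify $\xi(x)^{*}A\xi(x)$ with $\widehat{R_A(x,\cdot)}(\xi)$, the convolution theorem for the dequantisation direction, and testing \eqref{Quantisation:3} against matrix coefficients for uniqueness. One remark on the step you flag as the main obstacle: when $f=\xi_{ij}$, the Peter--Weyl orthogonality relations make $\widehat{\xi_{ij}}(\eta)$ vanish for every class $[\eta]\neq[\xi]$, so the sum over $\widehat{G}$ in \eqref{Quantisation:3} has exactly one nonzero term; there is nothing to interchange and no distributional convergence to justify, so that concern is moot. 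The only genuine care needed is the one you already mention, namely that the extracted symbol transforms consistently under a change of representative in $[\xi]$, which follows from the covariance $\sigma_A(x,S\xi S^{-1})=S\,\sigma_A(x,\xi)\,S^{-1}$ visible from the formula $\sigma_A(x,\xi)=\xi(x)^{*}A\xi(x)$.
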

\begin{example}[Spectrum of the Laplacian] Let $$\mathbb{X}=\{X_1,\cdots,X_n\}$$ be an orthonormal basis of the Lie algebra $\mathfrak{g}.$ The positive Laplacian on $G$ is the second order differential operator 
\begin{equation}
    \mathcal{L}_G=-\sum_{j=1}^nX_j^2.
\end{equation}The operator $ \mathcal{L}_G$ is independent of the choice of the orthonormal basis $\mathbb{X}$ of $\mathfrak{g}.$ The $L^2$-spectrum of $\mathcal{L}_G$ is a discrete set that can be enumerated in terms of the unitary dual $\widehat{G}$
\begin{equation}
    \textnormal{Spect}(\mathcal{L}_G)=\{\lambda_{[\xi]}:[\xi]\in \widehat{G}\}.
\end{equation} Of particular interest for our further analysis will be the Japanese bracket function
\begin{equation}
    \langle t\rangle:=(1+t)^{\frac{1}{2}},\,t\geq -1.
\end{equation}In particular the symbol of the operator $ \langle \mathcal{L}_G\rangle=(1+\mathcal{L}_G)^{\frac{1}{2}}$ is given by
\begin{equation}\label{Japanne:bracket:G}
     \sigma_{\langle \mathcal{L}_G\rangle}([\xi]):= \langle \xi \rangle I_{d_\xi}, \,\,\,\langle \xi \rangle:=\langle \lambda_{[\xi]} \rangle.  
\end{equation}

\end{example}

Consider the action of the group $G$ on $C^{\infty}(G)$ given by $\rho:(x,f)\longmapsto f\circ L_x,$ where $L_x(y):=xy.$ A continuous operator $A:C^{\infty}(G)\rightarrow C^{\infty}(G)$ is called {\it left-invariant} if $A$ commutes with $\rho(x,\cdot)$ for all $x\in G,$ i.e., if it satisfies the following property
$$A(f\circ L_x)=(Af)\circ L_x,\ f\in C^{\infty}(G),\ x\in G.$$ 
\begin{proposition}[\cite{Ruz}] The following statements are equivalent:
\begin{itemize}
    \item[$i)$] $A$ is left-invariant.
    \item[$ii)$] $R_A(x,y)=R_A(zx,y),\ \forall x,y,z\in G.$
    \item[$iii)$] $\sigma_A(x_1,\xi)=\sigma_A(x_2,\xi),$ for all $x_1,x_2\in G$ and $\xi\in \textnormal{Rep}(G).$
    \item[$iv)$] $A_{x_0}=A,\ \forall x_0\in G.$ 
\end{itemize}
\end{proposition}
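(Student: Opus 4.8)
The plan is to establish all four equivalences by routing everything through condition $ii)$ --- the statement that the right-convolution kernel $R_A$ is independent of its first (spatial) variable. The essential analytic content lies in $i)\Leftrightarrow ii)$; the implications $ii)\Leftrightarrow iii)$ and $ii)\Leftrightarrow iv)$ are then formal consequences of the definitions $\sigma_A(x,\xi)=\widehat{R_A(x,\cdot)}(\xi)$ and $A_{x_0}f=f\ast R_A(x_0,\cdot)$, and a direct argument closes the cycle with $iv)\Rightarrow i)$.

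First I would prove $i)\Rightarrow ii)$ by translating left-invariance into a symmetry of the Schwartz kernel $K_A$. Writing both sides of $A(f\circ L_z)=(Af)\circ L_z$ as integral operators against $K_A$, substituting $w=zy$, and using left-invariance of the Haar measure on the compact (hence unimodular) group $G$, I expect $A(f\circ L_z)(x)=\int_G K_A(x,z^{-1}w)f(w)\,dw$ to match $(Af)(zx)=\int_G K_A(zx,w)f(w)\,dw$. Thus left-invariance is equivalent to $K_A(zx,zu)=K_A(x,u)$ for all $x,u,z\in G$. Inserting this into $R_A(x,y)=K_A(x,xy^{-1})$ with the choice $u=xy^{-1}$ gives $R_A(zx,y)=K_A(zx,z(xy^{-1}))=K_A(x,xy^{-1})=R_A(x,y)$, which is precisely $ii)$. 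Interpreted in the distributional sense (testing against smooth $f$), this is the technical core.

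Next, $ii)\Leftrightarrow iii)$ is immediate: since $\sigma_A(x,\xi)=\widehat{R_A(x,\cdot)}(\xi)$ is the group Fourier transform in the second variable of $y\mapsto R_A(x,y)$, and this transform is injective by the Fourier inversion formula, the family $R_A(x,\cdot)$ is independent of $x$ if and only if $\sigma_A(x,\xi)$ is. Similarly $ii)\Leftrightarrow iv)$ follows by comparing $Af(x)=\int_G R_A(x,y^{-1}x)f(y)\,dy$ with $A_{x_0}f(x)=\int_G R_A(x_0,y^{-1}x)f(y)\,dy$: testing against arbitrary $f$ and noting that $y^{-1}x$ ranges over all of $G$ shows $A_{x_0}=A$ for every $x_0$ exactly when $R_A(x_0,z)=R_A(x,z)$ for all arguments, i.e.\ condition $ii)$.

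Finally, to close the loop I would prove $iv)\Rightarrow i)$ directly. Under $iv)$ the operator $A$ equals $A_{x_0}$ for a fixed $x_0$, so $Af=f\ast R_A(x_0,\cdot)$ is a right-convolution operator with a fixed kernel; a single change of variables $w=zy$ (again invoking invariance of the Haar measure) shows $((f\circ L_z)\ast g)(x)=(f\ast g)(zx)$, so $A(f\circ L_z)=(Af)\circ L_z$ and $A$ is left-invariant. The main obstacle I anticipate is purely bookkeeping: tracking the inverses and the order of group multiplication through each change of variables, and justifying the manipulations at the level of distributions rather than genuine functions. Once the conventions for $L_x$, for right convolution, and for $R_A$ are fixed consistently, each implication reduces to a short computation.
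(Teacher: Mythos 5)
Your argument is correct: the paper states this proposition without proof (citing \cite{Ruz}), and your route --- translating left-invariance into the kernel symmetry $K_A(zx,zu)=K_A(x,u)$, deducing that $R_A(x,y)=K_A(x,xy^{-1})$ is independent of $x$, and then passing to $iii)$ and $iv)$ via injectivity of the group Fourier transform and uniqueness of Schwartz kernels --- is essentially the standard proof given in that reference. The only point to keep in mind is that the kernel identities should be read distributionally (tested against $f\in C^\infty(G)$ in both variables), which you already flag; no substantive gap remains.
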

In particular, the proposition above says that for a left-invariant operator $A$, the symbol $\sigma_A(x,\xi)$ does not depend on $x$, so in this case we can define $\sigma_A(\xi):=\sigma_A(x,\xi)$ for any $x\in G.$

\begin{remark} Let $A,B:C^\infty(G)\rightarrow C^\infty(G)$ be  continuous linear operators. Assume that $A$ is the generator of a $C_0$-semigroup $S(t),$ that is
\begin{equation}
    \forall f\in C^{\infty}(G),\,\,Af=\lim_{t\rightarrow 0}\frac{1}{t}\left(S(t)f-f\right),
\end{equation}where the limit is taken with respect to the $L^2$-norm on $G$. If $A$ is left-invariant, note that
    \begin{equation}\label{symbolproperties}
        \forall \xi \in \widehat{G},\,\,\,\sigma_{S(t)}(\xi)=e^{t\sigma_A(\xi)} \textnormal{ and }\sigma_{B^\ast}(\xi)=\sigma_{B}(\xi)^*.
    \end{equation}
\end{remark}

\begin{remark}[Fourier multipliers on compact manifolds vs invariant operators] \label{main:remark:global}
If $A:C^\infty(G)\rightarrow L^2(G)$ is an invariant operator (with respect to the Laplacian $\mathcal{L}_G$) then we have two notions of global symbols for $A.$ One is defined in terms of the representation theory of the group $G$ and we will denote this symbol by $(\sigma_{A}(\xi))_{[\xi]\in \widehat{G}},$ and the other one is that defined when we consider the compact Lie group as a manifold, and in this case the symbol will be denoted by $(\sigma_{A}(l))_{l\in \mathbb{N}_0}.$ The relation of this two symbols has been established in \cite[Page 25]{DelRuzTrace1}. Now, we describe this relation. In the setting of compact Lie groups the unitary
dual being discrete, we can enumerate the unitary dual as $[\xi_{j}],$ for $j\in\mathbb{N}_0.$  In this way we fix the orthonormal basis
\begin{equation}
\{e_{jk}\}_{k=1}^{d_j}=\{ d_{\xi_{j}}^{\frac{1}{2}}(\xi_{j})_{il}  \}_{i,l=1}^{d_{\xi_j}}
\end{equation}
where $d_{j}=d_{\xi_j}^2.$ Then, we have the subspaces $H_{j}=\textnormal{span}\{(\xi_j)_{i,l}:i,l=1,\cdots,d_{\xi_j}\}.$ With the notation above we have
$$\sigma_{A}(l)=
\begin{bmatrix}
    \sigma_{A}({\xi_{l}}) & 0_{d_{\xi_{l}}\times d_{\xi_{l}} } & 0_{d_{\xi_{l}}\times     d_{\xi_{l}} } & \dots  & 0_{d_{\xi_{l}}\times d_{\xi_{l}} } \\
     0_{d_{\xi_{l}}\times d_{\xi_{l}} } & \sigma_{A}({\xi_{l}})& 0_{d_{\xi_{l}}\times d_{\xi_{l}} } & \dots  & 0_{d_{\xi_{l}}\times d_{\xi_{l}} } \\
    \vdots & \vdots & \vdots & \ddots & \vdots \\
   0_{d_{\xi_{l}}\times d_{\xi_{l}} } & 0_{d_{\xi_{l}}\times d_{\xi_{l}} } &0_{d_{\xi_{l}}\times d_{\xi_{l}} } & \dots  & \sigma_{A}({\xi_{l}})
\end{bmatrix}_{d_{l}\times d_{l}}.$$
\end{remark}

\section{Control theory on Hilbert spaces: symbol criteria}\label{Control:Compact}
In this section we present a controllability criterion for the Cauchy problem associated to Fourier multipliers on Hilbert spaces. As it was shown in the previous section these are operators leaving invariant a fixed decomposition of the Hilbert space in subspaces of finite dimension.   Such a result is presented below as Theorem \ref{Main:Theorem:HS} and   will be formulated   in terms of the global matrix-valued symbols of the operators. For our further analysis we require the following definition. 
\begin{definition}[Image of the Cauchy problem under the Fourier transform]\label{Fourier:costs}
   Let $\mathcal{H}$ be a complex Hilbert space and let $\mathcal{H}^{\infty}\subset \mathcal{H}$ be a dense
linear subspace of $\mathcal{H}$. Let $\mathcal{H}=\bigoplus_{j}H_j$ be a decompositon of $\mathcal{H}$ in orthogonal subspaces $H_j$ of dimension $d_{j}\in\mathbb{N}.$ Let $A,B:\mathcal{H}^\infty\rightarrow \mathcal{H}$ be Fourier multipliers relative to the decomposition $\{H_j\}_{j\in \mathbb{N}_0}.$ Consider the Cauchy problem
\begin{equation}\label{Cauchy:H}
\textnormal{(CP): }\begin{cases}\displaystyle\frac{d u}{dt}=Au+Bv,& 
    \\
\\u(0)=u_{0}\in \mathcal{H}^\infty.
\text{ } \end{cases} 
\end{equation}We define the image of $\textnormal{(CP)}$ under the  Fourier transform relative the the decomposition $\{H_j\}_{j\in \mathbb{N}_0}$ to be the infinite family of finite-dimensional dynamical systems
\begin{equation}\label{Cauchy:ell}
(\textnormal{(CP)},{\ell}):\begin{cases}\displaystyle\frac{d\widehat{ u}(\ell)}{dt}=\sigma_A(\ell)\widehat{u}(\ell)+\sigma_B(\ell)\widehat{v}(\ell),& 
    \\
\\\widehat{u(0)}(\ell)=\widehat{u}_{0}(\ell)\in \mathbb{C}^{d_\ell}.
\text{ } \end{cases},\,\ell \in \mathbb{N}_0.
\end{equation}
\end{definition}
The previous definition is motivated by the reduction of the controllability of the system \eqref{Cauchy:H}
\begin{lemma}\label{Lemma:ell}The following statements are equivalent.
\begin{itemize}
\item[(1)] For all $\ell\in\mathbb{N},$ the Cauchy problem $(\textnormal{(CP)},{\ell})$ in  \eqref{Cauchy:xi} is a controllable dynamical system at a time $T>0$.
    \item[(2)] $\forall\ell\in\mathbb{N}_0$, the Kalman condition 
    \begin{equation}
        \textnormal{rank}\left[\sigma_B(\ell),\ \sigma_A(\ell)\sigma_B(\ell),\ \cdots,\ \sigma_A(\ell)^{{d_\ell}-1}\sigma_B(\ell)\right]=d_{\ell}
    \end{equation}is satisfied.
    \item[(3)]  $\forall \ell\in\mathbb{N}_0,\ \exists c= c(\ell,T)>0$ such that \begin{equation}\label{Observability}
\displaystyle\int\limits_0^T||\sigma_B(\ell)^*\exp{(t\sigma_A(\ell)^*)}z||^2_{\mathbb{C}^{d_\ell}}dt\geq c(\ell,T)^2||z||^2_{\mathbb{C}^{d_\ell}},\ \forall z\in \mathbb{C}^{\ell}.
    \end{equation}
\end{itemize}
\end{lemma}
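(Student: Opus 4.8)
The plan is to recognise each fibre system $(\textnormal{(CP)},\ell)$ as a finite-dimensional control system of exactly the abstract form \eqref{Maingeneral}, and then to invoke the two abstract criteria from Section \ref{Preliminaries} verbatim on each space $\mathbb{C}^{d_\ell}$. Concretely, for a fixed $\ell$ the system \eqref{Cauchy:ell} reads
\begin{equation*}
\frac{d\widehat{u}(\ell)}{dt}=\sigma_A(\ell)\widehat{u}(\ell)+\sigma_B(\ell)\widehat{v}(\ell),
\end{equation*}
which is \eqref{Maingeneral} with $\mathcal{H}=\mathcal{V}=\mathbb{C}^{d_\ell}$, the matrices $\sigma_A(\ell),\sigma_B(\ell)\in\mathbb{C}^{d_\ell\times d_\ell}$ playing the roles of $A$ and $B$. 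The first observation I would record is that, since $\sigma_A(\ell)$ is a finite matrix, it automatically generates the $C_0$-semigroup $t\mapsto\exp(t\sigma_A(\ell))$, so all the hypotheses needed to apply Theorems \ref{Kalmantheorem} and \ref{observabletheorem} are satisfied on $\mathbb{C}^{d_\ell}$.

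For the equivalence $(1)\Leftrightarrow(2)$, I would apply Kalman's criterion (Theorem \ref{Kalmantheorem}) to each $(\textnormal{(CP)},\ell)$ with $n=m=d_\ell$. The rank condition \eqref{kalmancondition} then reads precisely
\begin{equation*}
\textnormal{rank}\left[\sigma_B(\ell),\ \sigma_A(\ell)\sigma_B(\ell),\ \cdots,\ \sigma_A(\ell)^{d_\ell-1}\sigma_B(\ell)\right]=d_{\ell},
\end{equation*}
which is statement $(2)$. Since this condition is manifestly independent of $T$, one simultaneously recovers the standard finite-dimensional fact that controllability of each block in some time is equivalent to controllability in every time.

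For $(1)\Leftrightarrow(3)$, I would apply the observability criterion (Theorem \ref{observabletheorem}) to each block. Here the semigroup generated by $\sigma_A(\ell)$ is $S_\ell(t)=\exp(t\sigma_A(\ell))$, whose adjoint is $S_\ell(t)^*=\exp(t\sigma_A(\ell)^*)$, while the adjoint of the control matrix is its conjugate transpose $\sigma_B(\ell)^*$. Substituting these into the abstract observability inequality \eqref{generalobservableinequality}, and using that in finite dimension $D(\sigma_A(\ell)^*)=\mathbb{C}^{d_\ell}$ so that the inequality is demanded for all $z$, yields exactly \eqref{Observability}.

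The argument involves no genuine obstacle: the lemma is a fibrewise translation of the two abstract theorems to the finite-dimensional systems produced by the Fourier transform of Definition \ref{Fourier:costs}. The only points deserving care are the trivial but essential remark that every $d_\ell\times d_\ell$ matrix is the infinitesimal generator of its own exponential semigroup -- so that the hypotheses of Theorems \ref{Kalmantheorem} and \ref{observabletheorem} hold on each $\mathbb{C}^{d_\ell}$ -- together with the adjoint identity $S_\ell(t)^*=\exp(t\sigma_A(\ell)^*)$, which is immediate from the matrix calculus. With these in hand the three statements collapse to a single fact applied uniformly over $\ell$.
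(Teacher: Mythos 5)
Your proposal is correct and follows the same route as the paper: the equivalence $(1)\Leftrightarrow(2)$ is Kalman's criterion (Theorem \ref{Kalmantheorem}) applied blockwise, and $(1)\Leftrightarrow(3)$ is the observability criterion (Theorem \ref{observabletheorem}) applied blockwise, with $(2)\Leftrightarrow(3)$ following by transitivity. Your additional remarks on the matrix exponential semigroup and the adjoint identity are harmless elaborations of what the paper leaves implicit.
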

\begin{proof}Note that the equivalence $\textnormal{(1)}\Longleftrightarrow \textnormal{(2)}$ follows from Kalman's criterion (see Theorem \ref{Kalmantheorem}). On the other hand, the equivalence  $\textnormal{(1)}\Longleftrightarrow \textnormal{(3)}$ is nothing else that the observability criterion in Theorem \ref{observabletheorem}. It is clear then the equivalence  $\textnormal{(2)}\Longleftrightarrow \textnormal{(3)}.$ The proof of Lemma \ref{Lemma:xi} is complete.    
\end{proof}
\begin{remark} Let $c_{\ell,T}$ be  the supremum of the constants $c= c(\ell,T)>0$ satisfying  the observability inequality \eqref{Observability}.
    According to the usual nomenclature of the control theory, the constant $$ \boxed{\mathscr{C}_{\ell,T}:={1}/{c_{\ell,T}}}$$   is called the controllability cost of the Cauchy problem  \eqref{Cauchy:ell}.
\end{remark}

\begin{definition}\label{Definition:global:consts}
    We will say that the image of the Cauchy problem \eqref{Cauchy:H} under the Fourier transform associated to the decomposition $\{H_j\}_{j\in \mathbb{N}},$ has a {\it finite global controllability cost} if 
    \begin{equation}
     \mathscr{C}_{T}:=   \sup_{\ell\in\mathbb{N} }\mathscr{C}_{\ell,T}<\infty.
    \end{equation}
\end{definition}

Now, we present the following criterion for the controllability of the Cauchy problem for Fourier multipliers on Hilbert spaces.

\begin{theorem}\label{Main:Theorem:HS}
Let $\mathcal{H}$ be a complex Hilbert space and let $\mathcal{H}^{\infty}\subset \mathcal{H}$ be a dense
linear subspace of $\mathcal{H}$. Let $\mathcal{H}=\bigoplus_{j}H_j$ be a decompositon of $\mathcal{H}$ in orthogonal subspaces $H_j$ of dimension $d_{j}\in\mathbb{N}.$ Let $A,B:\mathcal{H}^\infty\rightarrow \mathcal{H}$ be Fourier multipliers relative to the decomposition $\{H_j\}_{j\in \mathbb{N}_0}.$    
\begin{itemize}
        \item[(1)] If the Cauchy problem
    \begin{equation}\label{MainII} \begin{cases}\displaystyle\frac{d u}{dt}=Au+Bv,& 
    \\
\\u(0)=u_{0}\in \mathcal{H}^\infty,
\text{ } \end{cases}
\end{equation}
    is controllable, then for any $\ell\in \mathbb{N}_0,$ the global symbols $\sigma_A(\ell)$ and $\sigma_B(\ell)$ of $A$ and $B,$ respectively, satisfy the Kalman condition:
    \begin{equation}\label{Rank:Hilbert:spaces}
     \textnormal{rank}\left[\sigma_B(\ell),\ \sigma_A(\ell)\sigma_B(\ell),\ \cdots,\ \sigma_A(\ell)^{{d_\ell}-1}\sigma_B(\ell)\right]=d_{\ell}.   
    \end{equation} Additionally, if $A$ generates a strongly continuous semigroup on $\mathcal{H},$ the image of the Cauchy problem \eqref{MainII} under the Fourier transform relative to the decomposition $(H_j)_{j\in \mathbb{N}},$  has a {\it finite global controllability cost} at time $T>0,$ that is 
    $$  \mathscr{C}_{T}:=   \sup_{\ell\in \mathbb{N}_0 }\mathscr{C}_{\ell,T}<\infty.$$
    Moreover, 
    $$   \mathscr{C}_{T}\leq \Tilde{\mathscr{C}}_T, $$ where  $\tilde{\mathscr{C}_{T}}$ is  the controllability cost of \eqref{MainII}.
    \item[(2)] Conversely, assume that $A$ is the generator of a strongly continuous semigroup on $\mathcal{H},$ and that the Kalman condition \eqref{Rank:Hilbert:spaces} is satisfied for each $\ell\in\mathbb{N}_0$. Assume that the image of the Cauchy problem \eqref{MainII} under the Fourier transform relative to the decomposition $(H_j)_{j\in \mathbb{N}},$  has a {\it finite global controllability cost} in time $T>0,$ that is, $$  \mathscr{C}_{T}:=   \sup_{\ell\in \mathbb{N}_0 }\mathscr{C}_{\ell,T}<\infty. $$ Then, the Cauchy problem  \eqref{MainII} is controllable at time $T>0,$ and its controllability costs $\tilde{\mathscr{C}_{T}}$ satisfies the inequality
    \begin{equation}
      \mathscr{C}_{T}\geq   \tilde{\mathscr{C}_{T}}.
    \end{equation}
    \end{itemize}
\end{theorem}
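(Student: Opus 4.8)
The plan is to deduce both implications from the observability criterion (Theorem \ref{observabletheorem}) by diagonalizing the observability functional with the Fourier transform. The key identity I would establish first is that, for every $z$ for which the left-hand side is finite (in particular every $z\in D(A^*)$ and every $z\in H_\ell$),
$$\int_0^T \|B^* S(t)^* z\|^2_{\mathcal{H}}\,dt = \sum_{\ell \in \mathbb{N}_0} \int_0^T \|\sigma_B(\ell)^* e^{t\sigma_A(\ell)^*}\,\widehat{z}(\ell)\|^2_{\mathbb{C}^{d_\ell}}\,dt,$$
whose $\ell$-th summand on the right is exactly the observability functional of $(\textnormal{(CP)},\ell)$ evaluated at $\widehat{z}(\ell)$. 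To obtain it I would use that $S(t)$ is the Fourier multiplier with symbol $e^{t\sigma_A(\ell)}$ and, by \eqref{symbolproperties:b}, that $S(t)^*$ and $B^*$ carry symbols $e^{t\sigma_A(\ell)^*}$ and $\sigma_B(\ell)^*$, so that $\widehat{B^* S(t)^* z}(\ell) = \sigma_B(\ell)^* e^{t\sigma_A(\ell)^*}\,\widehat{z}(\ell)$; then I would apply the Plancherel formula \eqref{Plancherel:H} pointwise in $t$ and interchange the (nonnegative) sum and integral by Tonelli's theorem.

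For Part (1), I would test the global observability inequality, which holds with optimal constant $c(T)>0$ because \eqref{MainII} is controllable (Theorem \ref{observabletheorem}), on vectors $z\in H_\ell$. For such $z$ only the $\ell$-th summand survives in the identity above, while $\|z\|^2_{\mathcal{H}}=\|\widehat{z}(\ell)\|^2_{\mathbb{C}^{d_\ell}}$, so the global inequality collapses to exactly the $\ell$-th finite-dimensional observability inequality \eqref{Observability}, with the constant $c(T)$ \emph{uniform in $\ell$}. (Here one checks $H_\ell\subset D(A^*)$, which holds since $A^*$ is again a Fourier multiplier leaving the finite-dimensional subspace $H_\ell$ invariant.) By Lemma \ref{Lemma:ell} this yields the Kalman condition \eqref{Rank:Hilbert:spaces} for every $\ell$; moreover $c(T)$ is an admissible constant for each fiber, whence $c_{\ell,T}\geq c(T)$, i.e. $\mathscr{C}_{\ell,T}\leq\tilde{\mathscr{C}}_T$ for all $\ell$, and taking the supremum gives $\mathscr{C}_T\leq\tilde{\mathscr{C}}_T<\infty$.

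For Part (2), I would run the argument in reverse. The Kalman condition together with Lemma \ref{Lemma:ell} gives the $\ell$-th observability inequality with optimal constant $c_{\ell,T}$, and the finite global controllability cost means $c_{\ell,T}\geq 1/\mathscr{C}_T>0$ uniformly in $\ell$. Bounding each summand of the diagonalization identity below by $c_{\ell,T}^2\|\widehat{z}(\ell)\|^2\geq \mathscr{C}_T^{-2}\|\widehat{z}(\ell)\|^2$ and summing, the Plancherel formula \eqref{Plancherel:H} yields
$$\int_0^T \|B^* S(t)^* z\|^2_{\mathcal{H}}\,dt \geq \mathscr{C}_T^{-2}\,\|z\|^2_{\mathcal{H}}, \qquad z\in D(A^*),$$
which is the global observability inequality with constant $1/\mathscr{C}_T$. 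Theorem \ref{observabletheorem} then gives controllability of \eqref{MainII}, and since $1/\mathscr{C}_T$ is admissible we obtain $c(T)\geq 1/\mathscr{C}_T$, i.e. $\tilde{\mathscr{C}}_T\leq\mathscr{C}_T$.

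I expect the main obstacle to be the rigorous justification of the diagonalization identity rather than any of the inequalities: one must verify that $S(t)^*$ and $B^*$ genuinely act as the claimed matrix multipliers on the Fourier side (including commuting the adjoint with the exponential, so that $S(t)^*$ has symbol $e^{t\sigma_A(\ell)^*}$), confirm the domain containments $H_\ell\subset D(A^*)$ and $S(t)^*z\in D(B^*)$ so that the observability criterion applies to the test vectors used, and control the interchange of the time integral with the infinite sum. Once this bookkeeping is in place, both implications reduce to the elementary observation that, via Plancherel, a constant uniform in $\ell$ in the fibered inequalities is equivalent to a single constant in the aggregate inequality, which also pins down the exact relation between $\mathscr{C}_T$ and $\tilde{\mathscr{C}}_T$.
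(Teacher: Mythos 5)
Your proposal is correct in substance and follows the same overall strategy as the paper: decouple the system via the Fourier transform, use the Plancherel formula \eqref{Plancherel:H} to diagonalize the observability functional into the fiberwise functionals, and observe that a constant uniform in $\ell$ in the fibered inequalities is equivalent to a single constant in the aggregate inequality. Your Part (2) coincides with the paper's argument essentially verbatim, and your derivation of the cost bound in Part (1) (testing the global observability inequality on data concentrated in a single $H_{\ell}$) is exactly what the paper does with the test vectors $\widehat{v}_z(\ell)=z\,\delta_{\ell,\ell_0}$. The one place where you genuinely deviate is the proof of the Kalman condition in Part (1): you obtain it by restricting the global observability inequality to $H_\ell$ and invoking Lemma \ref{Lemma:ell}, whereas the paper proves directly that each fiber system $(\textnormal{(CP)},\ell)$ is exactly controllable --- given $\zeta_0,\zeta_T\in\mathbb{C}^{d_\ell}$ one lifts them to vectors $u_0,u_T\in H_\ell\subset\mathcal{H}^\infty$, uses the control furnished by the controllability of \eqref{MainII}, and projects the resulting trajectory onto the $\ell$-th Fourier mode --- and then applies Theorem \ref{Kalmantheorem}. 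This difference is not cosmetic: the first claim of Part (1) (controllability implies the Kalman condition for every $\ell$) is stated \emph{without} assuming that $A$ generates a $C_0$-semigroup, and the paper's transfer-of-controls argument indeed needs no semigroup; your route passes through Theorem \ref{observabletheorem}, which is formulated under the standing assumption that $A$ generates a strongly continuous semigroup (otherwise $B^*S(t)^*z$ is not even defined). So, as written, your argument establishes the Kalman condition only under that additional hypothesis. This is easily repaired by substituting the paper's direct argument for that single claim; everything else, including your (welcome) attention to the domain containments and to the justification of interchanging the sum and the time integral --- points the paper glosses over --- is sound.
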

\begin{proof}For the proof of $\textnormal{(1)}$ let us analyse the image of \eqref{MainII} under the Fourier transform associated to the decomposition $H_j,$ $j\in \mathbb{N},$ in order to deduce \eqref{Rank:Hilbert:spaces}. For the proof of $\textnormal{(2)},$ by following the standard strategy of the control theory, we will reduce the controllability of the system \eqref{MainII}  to the validity of the observability inequality \eqref{generalobservableinequality} in Theorem \ref{observabletheorem}.
\begin{itemize}
    \item Assume that the Cauchy problem \eqref{MainII} is controllable. By fixing $\ell\in\mathbb{N}_0,$ and taking the Fourier transform relative to the decomposition $\{H_j\}_{j\in \mathbb{N}_0},$  we get 
    \begin{equation*}
    \displaystyle\frac{d\widehat{u}(\ell)}{dt}+\widehat{Au}(\ell)=\widehat{Bv}(\ell)\end{equation*}
    and, since $A,B$  are Fourier multipliers we have the following identity in terms of the symbols $\sigma_A$ and $\sigma_B$ of $A$ and $B,$ respectively,
    \begin{equation}\label{MaintransformedM}
    \displaystyle\frac{d\widehat{u}(\ell)}{dt}+\sigma_A(\ell)\widehat{u}(\ell)=\sigma_B(\ell)\widehat{v}(\ell).
    \end{equation}
    This is a dynamical system in the set of square matrices of order $d_\ell.$

     In order to prove the controllability of \eqref{MaintransformedM} let us take $$\zeta_0,\zeta_T\in\mathbb{C}^{d_\ell}.$$ For any $\ell'\in \mathbb{N}_0$ define $$ u_0=(e_{\ell'},\zeta_0)\delta_{\ell,\ell'}\textnormal{  and  }u_T:=(e_\ell,\zeta_T)\delta_{\ell,\ell'}.$$  Observe that the Fourier coefficients of $u_0$ and of $u_T$ satisfty that
     $$  \forall \ell\neq \ell', \,\,\,\widehat{u}_0(\ell')=0_{\mathbb{C}^{d_{\ell'}}}=\widehat{u}_T(\ell').$$
     Since each $H_{j}\subset \mathcal{H}^\infty, $ the vectors
      $u_0,u_T$ belong to $\mathcal{H}^\infty$ and they satisfy that $\widehat{u_0}(\ell)=\zeta_0$ and $\widehat{u_T}(\ell)=\zeta_T.$ Since \eqref{MainII} is controllable, there exists an input function $v$ such that the solution $u$ of \eqref{MainII} satisfies $u(T)=u_T,$ so $\widehat{u}(\ell)$ is a solution of \eqref{MaintransformedM} with $\widehat{u}(\ell)(0)=\zeta_0$ and $\widehat{u}(\ell)(T)=\zeta_T$, i.e., \eqref{MaintransformedM} is controllable.     
    By Kalman's criterion (see Theorem \ref{Kalmantheorem}) we conclude that
    \begin{equation}\label{Kalman:proof:M}
        \textnormal{rank}\left[\sigma_B(\ell),\ \sigma_A(\ell)\sigma_B(\ell),\ \cdots,\ \sigma_A(\ell)^{{d_\ell}-1}\sigma_B(\ell)\right]=d_{\ell}.
    \end{equation}   To end the proof of (1) we have to prove the estimate of the {\it globally finite controllability cost} of the system \eqref{MaintransformedM}. In view of Theorem \ref{observabletheorem}, we have the observability inequality
    \begin{equation}\label{observableinequality:u}
    \displaystyle\int\limits_0^T||B^*S(t)^*f||^2_{\mathcal{H}}dt\geq \left(\frac{1}{\Tilde{\mathscr{C}}_T}\right)^2||f||^2_{\mathcal{H}},\ \forall f\in \mathcal{H}^\infty,
    \end{equation}
    where $S(t)$ is the $C_0$-semigroup generated by $A$.  Via Plancherel theorem it is equivalent to the inequality
     \begin{align}\label{Plancherel:I:th}
    \int\limits_0^T\sum\limits_{\ell\in \mathbb{N}_0}||\sigma_B(\ell)^*\exp{(t\sigma_A(\ell)^*)}\widehat{f}(\ell)||^2_{ \mathbb{C}^{d_\ell} }dt\geq \left(\frac{1}{\Tilde{\mathscr{C}}_T}\right)^2\sum\limits_{\ell\in \mathbb{N}_0 }||\widehat{f}(\ell)||^2_{ \mathbb{C}^{d_\ell} }\,\,\forall f\in \mathcal{H}^\infty.
    \end{align} Now, if $\ell_0$ is fixed, and $z\in \mathbb{C}^{d_{\ell_0}}\setminus\{0\}$ is an arbitrary coordinate vector, let us consider the vector $v_z\in \mathcal{H}^\infty$ determined by the following Fourier coefficients
    \begin{equation}\label{test}
       \widehat{v}_z(\ell)=z\delta_{\ell,\ell_0},\,\,\ell\in \mathbb{N}_0. 
    \end{equation}Plugging \eqref{test} into \eqref{Plancherel:I:th} we have that
$$ \int\limits_0^T||\sigma_B(\ell_0)^*\exp{(t\sigma_A(\ell_0)^*)}z||^2_{ \mathbb{C}^{d_{\ell_0}} }dt\geq \left(\frac{1}{\Tilde{\mathscr{C}}_T}\right)^2||z||^2_{ \mathbb{C}^{d_\ell} }. $$ This inequality is the observability inequality of the system \eqref{MaintransformedM} when $\ell=\ell_0.$ Note that if $\mathscr{C}_{\ell,T}$ is the controllability costs of \eqref{MaintransformedM} when $\ell=\ell_0$, then we have the inequality  
$$ \left(\frac{1}{ \mathscr{C}_{\ell_0,T}}\right)^2\geq  \left(\frac{1}{\Tilde{\mathscr{C}}_T}\right)^2$$ from which we deduce that
 $$  \mathscr{C}_{T}=\sup_{\ell_0} \mathscr{C}_{T,\ell_0}\leq \Tilde{\mathscr{C}}_T, $$ as desired. The proof of (1) is complete.
 
    \item  Now, let us prove (2). So, 
    conversely, suppose that $$\forall\ell\in\mathbb{N}_0,\ \textnormal{rank}\left[\sigma_B(\ell),\ \sigma_A(\ell)\sigma_B(\ell),\ \cdots,\ \sigma_A(\ell)^{{d_\ell}-1}\sigma_B(\ell)\right]=d_\ell.$$ 
  We want to prove the controllability of the Cauchy problem \eqref{MainII}  in any time $T>0$. According to Theorem \ref{observabletheorem}, it is sufficient to show that there exists $c_T>0$ such that 
    \begin{equation}\label{observable:inequality:Hilbert}
    \displaystyle\int\limits_0^T||B^*S(t)^*f||^2_{\mathcal{H}}dt\geq c_T^2||f||^2_{\mathcal{H}},\ \forall f\in \mathcal{H}^\infty,
    \end{equation}
    where $S(t)$ is the $C_0$-semigroup generated by $A$. 
    By the Kalman criterion, we know that the system
    \begin{equation}
        \displaystyle\frac{d\gamma_{\ell}}{dt}+\sigma_A(\ell)\gamma_{\ell}=\sigma_B(\ell)v_{\ell}
    \end{equation}
     is controllable for every $\ell\in\mathbb{N}_0.$ In consequence, the inequality 
    \begin{equation*}
    \displaystyle\int\limits_0^T||\sigma_B(\ell)^*\exp{(t\sigma_A(\ell)^*)}z||^2_{\mathbb{C}^{d_\ell}}dt\geq c_{\ell,T}^2||z||^2_{ \mathbb{C}^{d_\ell}  },\ \forall z\in \mathbb{C}^{d_\ell},
    \end{equation*}holds and let us denote by $c_{\ell,T}>0$ the largest constant that satisfies this inequality.
    In particular, for $z=\widehat{f}(\ell)$  we get
$$ \int\limits_0^T||\sigma_B(\ell)^*\exp{(t\sigma_A(\ell)^*)}\widehat{f}(\ell)||^2_{ \mathbb{C}^{d_\ell} }dt\geq c_{\ell,T}^2||\widehat{f}(\ell)||^2_{ \mathbb{C}^{d_\ell}  }.  $$
By  summing over $\ell\in \mathbb{N}_0,$  we obtain
$$ \sum\limits_{ \ell\in \mathbb{N}_0 }\int\limits_0^T||\sigma_B(\ell)^*\exp{(t\sigma_A(\ell)^*)}\widehat{f}(\ell)||^2_{ \mathbb{C}^{d_\ell} }dt\geq \sum\limits_{ \ell\in \mathbb{N}_0 } c_{\ell,T}^2||\widehat{f}(\ell)||^2_{\mathbb{C}^{d_\ell}}.  $$
Consequently,
    \begin{align*}
    \int\limits_0^T\sum\limits_{\ell\in \mathbb{N}_0}||\sigma_B(\ell)^*\exp{(t\sigma_A(\ell)^*)}\widehat{f}(\ell)||^2_{ \mathbb{C}^{d_\ell} }dt\geq \sum\limits_{\ell\in \mathbb{N}_0 }c_{\ell,T}^2||\widehat{f}(\ell)||^2_{ \mathbb{C}^{d_\ell} }.
    \end{align*}  
    Using the semigroup property in \eqref{symbolproperties:b} we have that
    $$\sigma_B(\ell)^*\exp{(t\sigma_A(\ell)^*)}\widehat{f}(\ell)=\sigma_{B^*S(t)^*}(\ell)\widehat{f}(\ell)$$  we have that $$\int\limits_0^T\sum\limits_{ \ell\in \mathbb{N}_0 }||\sigma_{B^*S(t)^*}(\ell)\widehat{f}(\ell)||^2_{\mathbb{C}^{d_\ell}}dt\geq c_T^2\sum\limits_{ \ell\in \mathbb{N}_0 }||\widehat{f}(\ell)||^2_{\mathbb{C}^{d_\ell}},$$
    where $$c_T:=\inf\limits_{\ell\in \mathbb{N}_0}c_{\ell,T}.$$
    By Plancherel's formula (see \eqref{Plancherel:H})  we have that \begin{equation*}\displaystyle\int\limits_0^T||B^*S(t)^*f||^2_{\mathcal{H}}dt=\int\limits_0^T\sum\limits_{ \ell\in \mathbb{N}_0 }||\sigma_{B^*S(t)^*}(\ell)\widehat{f}(\ell)||^2_{\mathbb{C}^{d_\ell}}dt\end{equation*}
    and since
    \begin{equation*}
        ||f||^2_{\mathcal{H}}=\sum\limits_{\ell\in \mathbb{N}_0 }||\widehat{f}(\ell)||^2_{\mathbb{C}^{d_\ell} },
    \end{equation*}
    the equality \eqref{observable:inequality:Hilbert} holds with $$c_T:=\inf\limits_{\ell\in \mathbb{N}_0}c_{\ell,T}= \inf\limits_{ \ell\in \mathbb{N}_0 }1/\mathscr{C}_{\ell,T}=  1/ \sup_{\ell\in \mathbb{N}_0 }\mathscr{C}_{\ell,T}<\infty.$$ The proof of (2) is complete. Indeed, note that the
    controllability cost $\tilde{\mathscr{C}_{T}}$ of \eqref{MainII} is the infimum of the constants $c_T>0$ satisfying  \eqref{observable:inequality:Hilbert}, from where we deduce that
    \begin{equation}
      \mathscr{C}_{T}\geq   \tilde{\mathscr{C}_{T}}.
    \end{equation} 
\end{itemize}   
Having proved (1) and (2) the proof of Theorem \ref{Main:Theorem:HS} is complete.
\end{proof}

\section{Applications}\label{Applications}
\subsection{Decoupling Algorithm}
In this section we present a variety of applications of the criterion of controllability in Theorem \ref{MainII} and/or of the following algorithm developed during its proof.
\begin{algo}\label{algor} We start by fixing two densely defined operators $A$ and $B$  on a separable Hilbert space $\mathcal{H}$ satisfying the hypothesis of Theorem \ref{Main:Theorem:HS}.
\begin{itemize}
\item   $\texttt{Algorithm:} $ Criterion for the controllability of the Cauchy problem \eqref{MainII}. 
\item $\texttt{Input:} $ To give the  (global) controllability cost of the  systems defined below in  \eqref{Rep:intro}.
 \item $\texttt{Output:} $ To estimate the cost of controllability of the control system  \eqref{MainII} from above.
\item  $\texttt{Instructions:} $

    \item[] $\texttt{Step 1.} $ To compute the group Fourier transform  of the system \eqref{MainII}. Then one obtains an infinite number of control systems
    \begin{equation}\label{Rep:intro} \begin{cases}\displaystyle\frac{d \widehat{u}(\ell)}{dt}=\sigma_{A}(\ell)\widehat{u}(\ell)+\sigma_B(\ell)\widehat{v}(\xi),& 
    \\
\\\widehat{u(0)}(\ell)=\widehat{u}_{0}(\ell),
\text{ } \end{cases},\,\ell\in \mathbb{N}_0, 
\end{equation} At this point we recognize the input of our algorithm:
    \item[] $\texttt{Step 2.} $ To reduce  the controllability of the system   \eqref{MainII} to an observability inequality that involves the (global) controllability cost of the  systems in  \eqref{Rep:intro}.
    \item [] $\texttt{Step 3.}$ To estimate the controllability cost of the system in \eqref{MainII}  in terms of the  (global) controllability cost of the  systems in  \eqref{Rep:intro}.
    \item[]  $\texttt{Step 4.}$ If the estimated (global) controllability cost of the  systems in  \eqref{Rep:intro} is finite, we are able to deduce  the controllability of  \eqref{Rep:intro}.
\end{itemize}
\end{algo}

\subsection{Control for the Cauchy problem  on compact manifolds}\label{Control:compact:manifolds} Let $M$ be a closed manifold (compact and without boundary). Consider $E\in \Psi^\nu_{cl,+}(M)$ be a classical positive pseudo-differential operator of order $\nu>0$ on $M,$ see H\"ormander \cite{Hormander1985III}.  Let
\begin{itemize}
    \item $H_{j}=\textnormal{Ker}(E-\lambda_jI)$ be the family of eigenspaces of $E$,
    \item $P_{j}:L^2(M)\to H_{j}$ be the corresponding orthogonal projections,
    \item  $\{d_{j}\}_{j\in\mathbb{N}_{0}}\subset\mathbb{N}$ be the sequence formed by the dimensions of each $H_j.$
\end{itemize}
The following corollary gives the controllability for the Cauchy problem associated to $E$-invariant operators.
\begin{corollary} Let $M$ be a compact manifold without boundary. Let $E$ be a positive classical pseudo-differential operator on $M$ and let us consider the operators $A,B:C^\infty(M)\rightarrow C^\infty(M)$ being $E$-invariant operators.
\begin{itemize}
        \item[(1)] If the Cauchy problem
    \begin{equation}\label{MainII:M} \begin{cases}\displaystyle\frac{d u}{dt}=Au+Bv,& 
    \\
\\u(0)=u_{0}\in C^{\infty}(M)
\text{ } \end{cases}
\end{equation}
    is controllable, then for any $\ell\in \mathbb{N}_0,$ the global symbols $\sigma_A(\ell)$ and $\sigma_B(\ell)$ of $A$ and $B$ associated to the spectral decomposition of $E,$ respectively, satisfy the Kalman condition:
    \begin{equation}\label{Rank:M}
     \textnormal{rank}\left[\sigma_B(\ell),\ \sigma_A(\ell)\sigma_B(\ell),\ \cdots,\ \sigma_A(\ell)^{{d_\ell}-1}\sigma_B(\ell)\right]=d_{\ell}.   
    \end{equation}Additionally, if $A$ generates a strongly continuous semigroup on $L^2(M),$ the image of the Cauchy problem \eqref{MainII:M} under the Fourier transform relative to the decomposition $(H_j=\textnormal{Ker}(E-\lambda_jI))_{j\in \mathbb{N}},$  has a {\it finite global controllability cost} at time $T>0,$ that is 
    $$  \mathscr{C}_{T}:=   \sup_{\ell\in \mathbb{N}_0 }\mathscr{C}_{\ell,T}<\infty.$$
    Moreover, 
    $$   \mathscr{C}_{T}\leq \Tilde{\mathscr{C}}_T, $$ where  $\tilde{\mathscr{C}_{T}}$ is  the controllability cost of \eqref{MainII:M}.
    \item[(2)] Conversely, assume that $A$ is the generator of a strongly continuous semigroup on $\mathcal{H},$ and that the Kalman condition \eqref{Rank:M} is satisfied for each $\ell\in\mathbb{N}_0$. Assume that the image of the Cauchy problem \eqref{MainII:M} under the Fourier transform relative to the spectral decomposition $(H_j=\textnormal{Ker}(E-\lambda_j I))_{j\in \mathbb{N}},$  has a {\it finite global controllability cost} at time $T>0,$ that is, $$  \mathscr{C}_{T}:=   \sup_{\ell\in \mathbb{N}_0 }\mathscr{C}_{\ell,T}<\infty. $$ Then, the Cauchy problem  \eqref{MainII:M} is controllable at time $T>0,$ and its controllability costs $\tilde{\mathscr{C}_{T}}$ satisfies the inequality
    \begin{equation}
      \mathscr{C}_{T}\geq   \tilde{\mathscr{C}_{T}}.
    \end{equation}
    \end{itemize}    
\end{corollary}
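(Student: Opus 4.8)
The plan is to recognize this corollary as a direct specialization of Theorem \ref{Main:Theorem:HS} to the concrete Hilbert space $\mathcal{H}=L^2(M)$ with dense subspace $\mathcal{H}^\infty=C^\infty(M)$, once we verify that the spectral data of $E$ produce a decomposition of the required type and that $E$-invariance of $A$ and $B$ is precisely the Fourier-multiplier hypothesis. Thus the entire argument reduces to checking the two structural assumptions of the main theorem and then quoting it.

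First I would verify the geometric setup. Since $E\in\Psi^\nu_{cl,+}(M)$ with $\nu>0$ is a positive classical elliptic pseudo-differential operator on the closed manifold $M$, the standard elliptic theory of Seeley \cite{seeley1,seeley2} guarantees that $E$ has compact resolvent, hence a discrete spectrum $\{\lambda_j\}_{j\in\mathbb{N}_0}$ accumulating only at $+\infty$, with finite-dimensional eigenspaces $H_j=\textnormal{Ker}(E-\lambda_j I)$ and $d_j:=\dim H_j<\infty$. Positivity and self-adjointness of $E$ yield that the $H_j$ are mutually orthogonal and that their union spans a dense subspace, so that $L^2(M)=\bigoplus_j H_j$ is an orthogonal decomposition into finite-dimensional subspaces. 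Elliptic regularity gives $H_j\subset C^\infty(M)=\mathcal{H}^\infty$, so choosing an orthonormal basis $\mathcal{B}_j=\{e_j^k\}_{k=1}^{d_j}$ of smooth eigenfunctions of each $H_j$ places us exactly in the framework of Theorem \ref{THM:inv-rem}.

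Next I would translate $E$-invariance into the Fourier-multiplier property. By definition, $A$ (and likewise $B$) being $E$-invariant means $A(H_j)\subset H_j$ for every $j$, which is precisely condition (A) of Theorem \ref{THM:inv-rem}. The equivalence (A)$\Longleftrightarrow$(C) then furnishes, for each $\ell\in\mathbb{N}_0$, a matrix symbol $\sigma_A(\ell)\in\mathbb{C}^{d_\ell\times d_\ell}$ (and similarly $\sigma_B(\ell)$) satisfying $\widehat{Af}(\ell)=\sigma_A(\ell)\widehat{f}(\ell)$ for all $f\in C^\infty(M)$. Hence $A$ and $B$ are Fourier multipliers relative to $\{H_j\}_{j\in\mathbb{N}_0}$ in the sense of Definition \ref{Fourier:H}, and the symbols appearing in \eqref{Rank:M} are well defined.

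With both hypotheses verified, the proof is completed by invoking Theorem \ref{Main:Theorem:HS} verbatim with $\mathcal{H}=L^2(M)$, $\mathcal{H}^\infty=C^\infty(M)$, and the decomposition $H_j=\textnormal{Ker}(E-\lambda_j I)$: part (1) delivers the Kalman condition \eqref{Rank:M} together with the bound $\mathscr{C}_T\leq\tilde{\mathscr{C}_T}$ whenever $A$ generates a strongly continuous semigroup on $L^2(M)$, and part (2) delivers the converse controllability statement with $\mathscr{C}_T\geq\tilde{\mathscr{C}_T}$. The only genuine point requiring care, and the step I expect to be the main (albeit mild) obstacle, is the spectral verification that the eigenspaces of $E$ are finite-dimensional and orthogonally decompose $L^2(M)$; once this is in place, everything else is a mechanical substitution into the general theorem.
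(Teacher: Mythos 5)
Your proposal is correct and follows essentially the same route as the paper: the paper's proof is a one-line specialization of Theorem \ref{Main:Theorem:HS} to $\mathcal{H}=L^2(M)$, $\mathcal{H}^\infty=C^\infty(M)$, with the spectral decomposition $H_j=\textnormal{Ker}(E-\lambda_j I)$, using that $E$-invariance is equivalent to the Fourier-multiplier property. Your additional verification of the spectral facts (discreteness, finite multiplicities, orthogonality, smoothness of eigenfunctions) is a sound, if slightly more detailed, account of what the paper leaves implicit.
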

\begin{proof}
    The statement of this corollary follows from Theorem \ref{Main:Theorem:HS} applied to the $E$-invariant operators $A$ and $B$ which, equivalently, are Fourier multipliers relative to the spectral decomposition $H_j=\textnormal{Ker}(E-\lambda_jI).$ Note that in this case $\mathcal{H}^\infty=C^\infty(M)$ and $\mathcal{H}=L^2(M).$
\end{proof}

\subsection{Kalman criterion on compact Lie groups}\label{Compact:lie:group:sec}

In this subsection we prove our controllability criterion in the setting of compact Lie groups. First, we adapt  Definition \ref{Fourier:costs} in terms of the group Fourier transform of the group.
\begin{definition}\label{Cost:g} 
Let $A,B:C^\infty(G)\rightarrow C^\infty(G)$ be continuous and left-invariant  linear operators. Consider the Cauchy problem
\begin{equation}\label{Cauchy}
\textnormal{(CP)}:\begin{cases}\displaystyle\frac{d u}{dt}=Au+Bv,& 
    \\
\\u(0)=u_{0}\in C^{\infty}(G).
\text{ } \end{cases} 
\end{equation}We define the image of $\textnormal{(CP)}$ under the group Fourier transform to be the infinite family of finite-dimensional dynamical systems
\begin{equation}\label{Cauchy:xi}
(\textnormal{(CP)},{[\xi]}):\begin{cases}\displaystyle\frac{d\widehat{ u}(\xi)}{dt}=\sigma_A(\xi)\widehat{u}(\xi)+\sigma_B(\xi)\widehat{v}(\xi),& 
    \\
\\\widehat{u(0)}(\xi)=\widehat{u}_{0}(\xi)\in \mathbb{C}^{d_\xi\times d_\xi}.
\text{ } \end{cases},\,[\xi] \in \widehat{G}.
\end{equation}
\end{definition}
We present the following version of Lemma \ref{Lemma:ell} adapted to Definition \ref{Cost:g} for left-invariant operators. 
\begin{lemma}\label{Lemma:xi}The following statements are equivalent.
\begin{itemize}
\item[(1)] For all $\xi\in  \textnormal{Rep}(G),$ the Cauchy problem $(\textnormal{(CP)},{[\xi]})$ in  \eqref{Cauchy:xi} is a controllable dynamical system at a time $T>0$.
    \item[(2)] $\forall\xi\in\textnormal{Rep}(G)$, the Kalman condition 
    \begin{equation}
        \textnormal{rank}\left[\sigma_B(\xi),\ \sigma_A(\xi)\sigma_B(\xi),\ \cdots,\ \sigma_A(\xi)^{{d_\xi}-1}\sigma_B(\xi)\right]=d_{\xi}
    \end{equation}is satisfied.
    \item[(3)]  $\forall \xi\in\textnormal{Rep}(G),\ \exists c= c(\xi,T)>0$ such that \begin{equation}\label{Observability}
\displaystyle\int\limits_0^T||\sigma_B(\xi)^*\exp{(t\sigma_A(\xi)^*)}z||^2_{\textnormal{HS}}dt\geq c(\xi,T)^2||z||^2_{\textnormal{HS}},\ \forall z\in \mathbb{C}^{d_\xi\times d_\xi}.
    \end{equation}
\end{itemize}
\end{lemma}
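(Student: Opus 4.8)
The plan is to mirror the proof of Lemma \ref{Lemma:ell} at the level of each fixed representation $[\xi]\in\widehat{G}$, since each system $(\textnormal{(CP)},{[\xi]})$ in \eqref{Cauchy:xi} is finite-dimensional and the two abstract tools at our disposal — Kalman's criterion (Theorem \ref{Kalmantheorem}) and the observability criterion (Theorem \ref{observabletheorem}) — apply directly. Concretely, I would establish $(1)\Longleftrightarrow(2)$ via Kalman and $(1)\Longleftrightarrow(3)$ via observability, and then read off $(2)\Longleftrightarrow(3)$ by transitivity.

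The one point that genuinely differs from Lemma \ref{Lemma:ell} is the shape of the state space: here the Fourier coefficient $\widehat{u}(\xi)$ is a matrix in $\mathbb{C}^{d_\xi\times d_\xi}$ rather than a column vector, so the naive state dimension is $d_\xi^2$, while the Kalman rank condition in $(2)$ is stated with exponent $d_\xi-1$ and full rank $d_\xi$. The reconciliation — the crux of the argument — is that $\sigma_A(\xi)$ and $\sigma_B(\xi)$ act on $\widehat{u}(\xi)$ by left matrix multiplication, so the evolution in \eqref{Cauchy:xi} decouples column by column into $d_\xi$ identical copies of the column control system $\dot{c}=\sigma_A(\xi)c+\sigma_B(\xi)w$ on $\mathbb{C}^{d_\xi}$. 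Consequently the matrix system is controllable if and only if this single column system is, and (by Cayley-Hamilton) its controllability is governed precisely by the full-rank condition on $[\sigma_B(\xi),\sigma_A(\xi)\sigma_B(\xi),\dots,\sigma_A(\xi)^{d_\xi-1}\sigma_B(\xi)]$, which is exactly $(2)$. This yields $(1)\Longleftrightarrow(2)$.

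For $(1)\Longleftrightarrow(3)$ I would apply Theorem \ref{observabletheorem} to the finite-dimensional matrix system \eqref{Cauchy:xi}, whose adjoint dynamics are generated by $\sigma_A(\xi)^*$ and read out through $\sigma_B(\xi)^*$; the semigroup symbol identity $\sigma_{S(t)}(\xi)=e^{t\sigma_A(\xi)}$ from \eqref{symbolproperties} identifies $\exp(t\sigma_A(\xi)^*)$ as the relevant propagator. Since the Hilbert-Schmidt norm on $\mathbb{C}^{d_\xi\times d_\xi}$ is the sum of the squared Euclidean norms of the columns, the matrix observability inequality in $(3)$ again splits columnwise into the observability inequality for the $\mathbb{C}^{d_\xi}$ system, so the equivalence is the finite-dimensional observability criterion. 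Combining the two equivalences gives $(2)\Longleftrightarrow(3)$.

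I expect the columnwise decoupling to be the only real obstacle, in that one must verify that the rank and observability conditions phrased over $\mathbb{C}^{d_\xi\times d_\xi}$ genuinely reduce to conditions over $\mathbb{C}^{d_\xi}$. An alternative that sidesteps this entirely is to invoke Remark \ref{main:remark:global}: the manifold symbol $\sigma_A(l)$ is block-diagonal with $d_{\xi_l}$ copies of the group symbol $\sigma_A(\xi_l)$, and likewise for $B$, so the rank and observability conditions for the block-diagonal symbols coincide with those for a single block, and Lemma \ref{Lemma:ell} applies directly.
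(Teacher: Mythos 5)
Your proposal is correct and lands on the same two pillars as the paper --- Kalman's criterion (Theorem \ref{Kalmantheorem}) for $(1)\Leftrightarrow(2)$ and the observability criterion (Theorem \ref{observabletheorem}) for $(1)\Leftrightarrow(3)$ --- but you handle one point more carefully than the paper does at this spot, and by a different mechanism. The paper's proof of Lemma \ref{Lemma:xi} is a one-line deferral to Lemma \ref{Lemma:ell} and never addresses the mismatch you correctly flag: the state $\widehat{u}(\xi)$ lives in $\mathbb{C}^{d_\xi\times d_\xi}$, so a naive application of Kalman would produce a rank condition of size $d_\xi^2$ with exponent $d_\xi^2-1$, not the condition in $(2)$. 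The paper only reconciles this later, inside the proof of Theorem \ref{theorem:compact:Lie:group}: there the matrix system is vectorized via the identification of $\mathbb{C}^{d_\xi\times d_\xi}$ with $\mathbb{C}^{d_\xi^2}$, the symbols become block-diagonal matrices $\Sigma_A(\xi),\Sigma_B(\xi)$ with $d_\xi$ copies of $\sigma_A(\xi),\sigma_B(\xi)$ (cf.\ Remark \ref{main:remark:global}), the block structure gives $\textnormal{rank}[\Sigma_B(\xi),\dots,\Sigma_A(\xi)^{d_\xi^2-1}\Sigma_B(\xi)]=d_\xi\cdot\textnormal{rank}[\sigma_B(\xi),\dots,\sigma_A(\xi)^{d_\xi^2-1}\sigma_B(\xi)]$, and Cayley--Hamilton truncates the exponent from $d_\xi^2-1$ down to $d_\xi-1$. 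Your columnwise decoupling --- left multiplication by $\sigma_A(\xi)$ and $\sigma_B(\xi)$ acts independently on each column, each with its own control column, so the matrix system is $d_\xi$ identical copies of the column system on $\mathbb{C}^{d_\xi}$, and the Hilbert--Schmidt norm splits as the sum of squared column norms --- reaches the same conclusion more directly, without the detour through the $d_\xi^2$-sized Kalman matrix and Cayley--Hamilton. Both routes are valid; yours is arguably cleaner for the lemma as stated, while the paper's block-diagonal bookkeeping is what it actually needs later to transfer costs between the $\mathbb{C}^{d_\xi\times d_\xi}$ and $\mathbb{C}^{d_\xi^2}$ formulations. One cosmetic remark: for $(1)\Leftrightarrow(3)$ you do not need the semigroup symbol identity \eqref{symbolproperties} at all, since the system in \eqref{Cauchy:xi} is finite-dimensional and its propagator is the matrix exponential by definition; that identity only matters when gluing the fibers back to the operator-level system.
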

\begin{proof} The proof follows  exactly the same steps as the one done above for Lemma  \ref{Lemma:ell}. Indeed,  the equivalence $\textnormal{(1)}\Longleftrightarrow \textnormal{(2)}$ follows from Kalman's criterion (see Theorem \ref{Kalmantheorem}). On the other hand, the equivalence  $\textnormal{(1)}\Longleftrightarrow \textnormal{(3)}$ is nothing else that the observability criterion in Theorem \ref{observabletheorem}. Is clear then the equivalence  $\textnormal{(2)}\Longleftrightarrow \textnormal{(3)}.$ The proof of Lemma \ref{Lemma:xi} is complete.    
\end{proof}
\begin{remark} Let $c_{\xi,T}$ be  the supremum of the constants $c= c(\xi,T)>0$ satisfying  the observability inequality \eqref{Observability}.
    According to the usual nomenclature of the control theory, the constant $$ \boxed{\mathscr{C}_{\xi,T}:={1}/{c_{\xi,T}}}$$   is called the controllability cost of the Cauchy problem  \eqref{Cauchy:xi}.
\end{remark}

\begin{definition}
    We will say that the image of the Cauchy problem \eqref{Cauchy} under the group Fourier transform has a {\it finite global controllability cost} if 
    \begin{equation}
     \mathscr{C}_{T}:=   \sup_{[\xi]\in\widehat{G} }\mathscr{C}_{\xi,T}<\infty.
    \end{equation}
\end{definition}
The following is the  main theorem of this subsection.
\begin{theorem}\label{theorem:compact:Lie:group}
    Let $G$ be a compact Lie group and let $A,B:C^\infty(G)\rightarrow C^\infty(G)$ be continuous left-invariant linear operators. 
    \begin{itemize}
        \item[(1)]  If the Cauchy problem
    \begin{equation}\label{Main} \begin{cases}\displaystyle\frac{d u}{dt}=Au+Bv,& 
    \\
\\u(0)=u_{0}\in C^{\infty}(G)
\text{ } \end{cases}
\end{equation}
    is controllable in time $T>0$. Then, for each representation space the global symbols $\sigma_A$ and $\sigma_B$ of $A$ and $B,$ respectively, satisfy the Kalman condition:
    \begin{equation}\label{Kalman:compact:lie:group}
      \textnormal{rank}\left[\sigma_B(\xi),\ \sigma_A(\xi)\sigma_B(\xi),\ \cdots,\ \sigma_A(\xi)^{{d_\xi}-1}\sigma_B(\xi)\right]=d_{\xi}. 
    \end{equation} Additionally, if   $A$ is the generator of a strongly continuous semigroup on $L^2(G),$ the image of the Cauchy problem \eqref{Cauchy} under the group Fourier transform has a {\it finite global controllability cost} at time $T>0,$ that is, $$  \mathscr{C}_{T}:=   \sup_{[\xi]\in\widehat{G} }\mathscr{C}_{\xi,T}<\infty. $$ Moreover,    if  $\tilde{\mathscr{C}_{T}}$ is the controllability costs of \eqref{Cauchy} then
    \begin{equation}
      \mathscr{C}_{T}\leq   \tilde{\mathscr{C}_{T}}.
    \end{equation}
    \item[(2)] Conversely, assume that $A$ is the generator of a strongly continuous semigroup on $L^2(G),$ and that the Kalman condition \eqref{Kalman:compact:lie:group} is satisfied for each $\xi\in\textnormal{Rep}(G)$. Assume that the image of the Cauchy problem \eqref{Cauchy} under the group Fourier transform has a {\it finite global controllability cost} at time $T>0,$ that is, $$  \mathscr{C}_{T}:=   \sup_{[\xi]\in\widehat{G} }\mathscr{C}_{\xi,T}<\infty. $$ Then, the Cauchy problem  \eqref{Cauchy} is controllable at time $T>0,$ and its controllability costs $\tilde{\mathscr{C}_{T}}$ satisfies the inequality
    \begin{equation}
      \mathscr{C}_{T}\geq   \tilde{\mathscr{C}_{T}}.
    \end{equation}
    \end{itemize}
\end{theorem}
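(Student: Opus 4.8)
The plan is to deduce this theorem from the abstract Theorem \ref{Main:Theorem:HS} by specialising the Hilbert-space framework to $\mathcal{H}=L^2(G)$, $\mathcal{H}^\infty=C^\infty(G)$, and the Peter--Weyl decomposition $L^2(G)=\bigoplus_j H_j$, where each $H_j=\textnormal{span}\{(\xi_j)_{il}\}_{i,l=1}^{d_{\xi_j}}$ is the linear span of the matrix coefficients of a fixed representative $\xi_j$ of the $j$-th class in $\widehat{G}$. The first step is to observe that a continuous left-invariant operator is automatically a Fourier multiplier relative to $\{H_j\}_{j}$: since $H_j$ is the $\xi_j$-isotypic component of the left regular representation and $A,B$ commute with every left translation $L_x$, one has $AH_j\subset H_j$ and $BH_j\subset H_j$, which is precisely condition (A) of Theorem \ref{THM:inv-rem}. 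Thus $A$ and $B$ possess manifold symbols $\sigma_A(\ell),\sigma_B(\ell)\in\mathbb{C}^{d_\ell\times d_\ell}$ with $d_\ell=d_{\xi_\ell}^2$, and Theorem \ref{Main:Theorem:HS} applies verbatim.

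The second step is to transport the conclusions of Theorem \ref{Main:Theorem:HS}, stated in terms of $\sigma_A(\ell),\sigma_B(\ell)$, into statements about the representation symbols $\sigma_A(\xi),\sigma_B(\xi)$. Here I would invoke the identification of Remark \ref{main:remark:global}, which shows that in the normalised basis $\{d_{\xi_\ell}^{1/2}(\xi_\ell)_{il}\}$ the matrix $\sigma_A(\ell)$ is block diagonal, consisting of $d_{\xi_\ell}$ identical copies of $\sigma_A(\xi_\ell)$, and likewise for $\sigma_B(\ell)$. Under the vectorisation $\mathbb{C}^{d_{\xi}\times d_{\xi}}\cong\mathbb{C}^{d_\xi^2}$ (stacking columns, so that the Hilbert--Schmidt norm becomes the Euclidean norm and left multiplication $X\mapsto\sigma_A(\xi)X$ becomes $I_{d_\xi}\otimes\sigma_A(\xi)$), the finite-dimensional system $(\textnormal{(CP)},\ell)$ of \eqref{Cauchy:ell} coincides exactly with $(\textnormal{(CP)},[\xi_\ell])$ of \eqref{Cauchy:xi}.

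The third and crucial step is the purely algebraic reduction of the two symbol conditions. Because the Kalman matrix of a block-diagonal pair is block diagonal with the Kalman matrices of its blocks, and higher powers of $\sigma_A(\xi_\ell)$ are eliminated by the Cayley--Hamilton theorem, one has
\begin{equation*}
\textnormal{rank}\left[\sigma_B(\ell),\,\sigma_A(\ell)\sigma_B(\ell),\,\cdots,\,\sigma_A(\ell)^{d_\ell-1}\sigma_B(\ell)\right]
= d_{\xi_\ell}\cdot\textnormal{rank}\left[\sigma_B(\xi_\ell),\,\cdots,\,\sigma_A(\xi_\ell)^{d_{\xi_\ell}-1}\sigma_B(\xi_\ell)\right],
\end{equation*}
so the full-rank condition $=d_\ell=d_{\xi_\ell}^2$ is equivalent to the rank $=d_{\xi_\ell}$ condition \eqref{Kalman:compact:lie:group}. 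The same block structure makes the observability inequality \eqref{Observability} for $[\xi_\ell]$ in the Hilbert--Schmidt norm identical, column by column, to the inequality of Lemma \ref{Lemma:ell} for $\ell$ in the Euclidean norm; hence the optimal constants coincide, $c_{\xi_\ell,T}=c_{\ell,T}$, and therefore $\mathscr{C}_{\xi_\ell,T}=\mathscr{C}_{\ell,T}$. Taking suprema over the bijective enumeration $[\xi_\ell]\leftrightarrow\ell$ shows that the finite-global-controllability-cost hypotheses and the cost bounds $\mathscr{C}_T\le\tilde{\mathscr{C}}_T$ (part (1)) and $\mathscr{C}_T\ge\tilde{\mathscr{C}}_T$ (part (2)) transfer directly, using the group Plancherel identity \eqref{Plancherel} and the semigroup symbol relation \eqref{symbolproperties} in place of their Hilbert-space analogues.

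I expect the main obstacle to be the bookkeeping in the translation step: one must check that the $d_{\xi}^{1/2}$ normalisation of the basis makes the weighted group Plancherel formula \eqref{Plancherel} compatible with the unweighted Hilbert-space Plancherel formula underlying Theorem \ref{Main:Theorem:HS}, and that the column-wise decoupling of the left-multiplication operator is consistent with the block-diagonal description of Remark \ref{main:remark:global}. Once these normalisations are aligned, the rank computation and the equality of observability constants are routine, and the two parts of the theorem follow immediately from the corresponding parts of Theorem \ref{Main:Theorem:HS}.
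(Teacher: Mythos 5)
Your proposal is correct and follows essentially the same route as the paper: apply Theorem \ref{Main:Theorem:HS} to the Peter--Weyl decomposition, use the block-diagonal identification $\sigma_{A}(\ell)\equiv \Sigma_A(\xi_\ell)$ of Remark \ref{main:remark:global} together with the column-stacking vectorisation to match the two families of finite-dimensional systems and their observability constants, and reduce the rank condition from $d_{\xi}^2$ powers to $d_\xi$ powers via the block structure and the Cayley--Hamilton theorem. The normalisation issue you flag is harmless, since the Plancherel weight $d_\xi$ appears on both sides of each per-representation observability inequality and therefore does not affect the optimal constants, which is exactly how the paper treats it.
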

\begin{proof} For the proof of $(1)$ note that a direct application of Theorem \ref{MainII} provides a Kalman condition of dimension $d_\xi^2$ in any representation space (see Remark \ref{main:remark:global}). To reduce the dimension we will use the Cayley-Hamiton theorem in each representation space.  We do it as follows.
\begin{itemize}
    \item  Proof of (1).   Assume that the Cauchy problem \eqref{Main} is controllable. By fixing $\xi\in\textnormal{Rep}(G)$ and taking the group Fourier transform in both sides, we get \begin{equation*}
    \displaystyle\frac{d\widehat{u}(\xi)}{dt}=\widehat{Au}(\xi)+\widehat{Bv}(\xi).\end{equation*}
   Since $A,B$  are left-invariant we have
    \begin{equation}\label{Maintransformed}
    \displaystyle\frac{d\widehat{u}(\xi)}{dt}=\sigma_A(\xi)\widehat{u}(\xi)+\sigma_B(\xi)\widehat{v}(\xi).
    \end{equation}
    This is a dynamical system in the set of square matrices of order $d_\xi$ which can be identified with $\mathbb{C}^{d_\xi^2}$ via 
    \begin{equation}\label{identification}
        \textbf{Z}=(z_{ij})_{d_\xi\times d_\xi}\longmapsto \left(\begin{array}{c}\textbf{Z}^{1}\\
    \vdots\\
    \textbf{Z}^{d_\xi}\end{array}\right),
    \end{equation}
    where $\textbf{Z}^{j}$ denotes de $j$-th column vector of the matrix $\textbf{Z}$; so that equation \eqref{Maintransformed} becomes
    \begin{equation}\label{Maintransformed2}
    \displaystyle\frac{d\widehat{U}(\xi)}{dt}=\Sigma_A(\xi)\widehat{U}(\xi)+\Sigma_B(\xi)\widehat{V}(\xi),
    \end{equation}
    with 
    \begin{align*}
    &\Sigma_C(\xi):=\left(\begin{array}{cccc}
    \sigma_C(\xi) & \textbf{0}_{d_\xi\times d_\xi} & \cdots & \textbf{0}_{d_\xi\times d_\xi}\\
    \textbf{0}_{d_\xi\times d_\xi}&\sigma_C(\xi)&\cdots& \textbf{0}_{d_\xi\times d_\xi}\\
    \vdots & \vdots & \ddots & \vdots\\
    \textbf{0}_{d_\xi\times d_\xi} & \textbf{0}_{d_\xi\times d_\xi} & \cdots & \sigma_C(\xi)\end{array}\right),\ C\in\{A,B\},\\
    \\
    &\widehat{U}(\xi):=\left(\begin{array}{c}\widehat{u}(\xi)^{1}\\
    \vdots\\
    \widehat{u}(\xi)^{d_\xi}\end{array}\right)\ \text{and}\ \widehat{V}(\xi):=\left(\begin{array}{c}\widehat{v}(\xi)^{1}\\
    \vdots\\
    \widehat{v}(\xi)^{d_\xi}\end{array}\right).
    \end{align*}
With the notation above, we have proved that the two systems 
 \begin{equation}\label{Maintransformed}
    \displaystyle\frac{d\widehat{u}(\xi)}{dt}=\sigma_A(\xi)\widehat{u}(\xi)+\sigma_B(\xi)\widehat{v}(\xi),\,\,\frac{d\widehat{U}(\xi)}{dt}=\Sigma_A(\xi)\widehat{U}(\xi)+\Sigma_B(\xi)\widehat{V}(\xi),
    \end{equation}are equivalent, in the sense that any solution of \eqref{Maintransformed} induces a solution of the system \eqref{Maintransformed2} and  vice versa.  Observe also that we can enumerate the unitary dual $\widehat{G}$ as $[\xi_{j}],$ for $j\in\mathbb{N}_0.$  In this way we fix the orthonormal basis
\begin{equation}
\{e_{jk}\}_{k=1}^{d_j}=\{ d_{\xi_{j}}^{\frac{1}{2}}(\xi_{j})_{il}  \}_{i,l=1}^{d_{\xi_j}}
\end{equation}
where $d_{j}=d_{\xi_j}^2.$ Then, we have the subspaces $$H_{j}=\textnormal{span}\{(\xi_j)_{i,l}:i,l=1,\cdots,d_{\xi_j}\}.$$
In view of Remark \ref{main:remark:global}, note that the symbols $\sigma_{A}(\ell)$ and $\sigma_{B}(\ell)$ of $A$ and $B$ relative to the decomposition  $H_{j}=\textnormal{span}\{(\xi_j)_{i,l}:i,l=1,\cdots,d_{\xi_j}\}$ are given by
\begin{equation}
    \sigma_A(\ell)\equiv \Sigma_A(\xi_\ell) \textnormal{  and  }\sigma_B(\ell)\equiv \Sigma_B(\xi_\ell),\,\,\ell\in \mathbb{N}_0,
\end{equation}respectively. Now, by applying  Theorem \ref{Main:Theorem:HS}, we deduce that our hypothesis (1)
    implies the Kalman condition
    $$\textnormal{rank}\left[\Sigma_B(\xi_\ell),\ \Sigma_A(\xi_\ell)\Sigma_B(\xi_\ell),\ \cdots,\ \Sigma_A(\xi_\ell)^{{d_{\xi_\ell}^2 } -1}\Sigma_B(\xi_\ell)\right]=d_{\xi_\ell}^2,$$
where we have used that for any $\ell,$ $d_\ell=d_{\xi_\ell}^2.$ Since the map $\ell\mapsto [\xi_\ell]$ is a bijection, we will omit the subscript $\ell$ in $\xi_\ell$ and we will return to the generic notation $\xi$ for a unitary representation $\xi\in \textnormal{Rep}(G).$
Now, let us make a refinement  of  the Kalman condition
$$\textnormal{rank}\left[\Sigma_B(\xi),\ \Sigma_A(\xi)\Sigma_B(\xi),\ \cdots,\ \Sigma_A(\xi)^{{d_\xi^2}-1}\Sigma_B(\xi)\right]=d_{\xi}^2.$$
For instance, note that 
    $$\Sigma_A(\xi)^j\Sigma_B(\xi)=\left(\begin{array}{cccc}
    \sigma_A(\xi)^j\sigma_B(\xi) & \textbf{0}_{d_\xi\times d_\xi} & \cdots & \textbf{0}_{d_\xi\times d_\xi}\\
    \textbf{0}_{d_\xi\times d_\xi}&\sigma_A(\xi)^j\sigma_B(\xi)&\cdots& \textbf{0}_{d_\xi\times d_\xi}\\
    \vdots & \vdots & \ddots & \vdots\\
    \textbf{0}_{d_\xi\times d_\xi} & \textbf{0}_{d_\xi\times d_\xi} & \cdots & \sigma_A(\xi)^j\sigma_B(\xi)\end{array}\right),\ j=0,...,d_\xi^2-1.$$
    Thus 
    \begin{align*}
    &\textnormal{rank}\left[\Sigma_B(\xi),\ \Sigma_A(\xi)\Sigma_B(\xi),\ \cdots,\ \Sigma_A(\xi)^{{d_\xi^2}-1}\Sigma_B(\xi)\right]=\\
    & d_{\xi}\cdot \textnormal{rank}\left[\sigma_B(\xi),\ \sigma_A(\xi)\sigma_B(\xi),\ \cdots,\ \sigma_A(\xi)^{{d_\xi}^2-1}\sigma_B(\xi)\right],
    \end{align*}
    and therefore
    $$\textnormal{rank}\left[\sigma_B(\xi),\ \sigma_A(\xi)\sigma_B(\xi),\ \cdots,\ \sigma_A(\xi)^{{d_\xi}^2-1}\sigma_B(\xi)\right]=d_\xi.$$
    Now, by the Caley-Hamilton theorem we know that if 
    $$  P_{\sigma_A(\xi)}(\lambda)=\det(\lambda I_{d_\xi\times d_\xi}-\sigma_A(\xi))=\lambda^{d_\xi}-\sum_{j=0}^{d_\xi-1}\alpha_{\xi,j}\lambda^j,$$
    is the characteristic polynomial of the symbol $\sigma_A(\xi),$ then $$  P_{\sigma_A(\xi)}(\sigma_A(\xi))=0,$$ which is equivalent to say that
    \begin{equation}
        \sigma_A(\xi)^{d_\xi}=\sum_{j=0}^{d_\xi-1}\alpha_{\xi,j}\sigma_A(\xi)^j.
    \end{equation}    
   In consequence,  for every $j\in\{d_\xi,...,d_\xi^2-1\},$ there exist $c_{j}^0,c_{j}^1,\cdots ,c_{j}^{j}\in\mathbb{C},$ where each $c_{k}^{j}=c_{k}^{j}(\xi)$ depends of $\xi,$ such that $$\sigma_A(\xi)^j=\sum\limits_{k=0}^{j}c_{j}^{k}\sigma_A(\xi)^k.$$  
   So in the matrix 
   $$  \left[\sigma_B(\xi),\ \sigma_A(\xi)\sigma_B(\xi),\ \cdots,\ \sigma_A(\xi)^{{d_\xi}-1}\sigma_B(\xi)\right],  $$
   the terms $\sigma_A(\xi)^{j}\sigma_B(\xi),$ $d_\xi\leq j\leq d_\xi^2-1,$ can be written as a linear combination of the first $d_\xi-1$ matrix-blocks $\sigma_A(\xi)^{j'}\sigma_B(\xi),$ $1\leq j'\leq d_\xi-1.$ After doing the Gaussian reduction we have that
    \begin{align*}
      &\textnormal{rank}\left[\sigma_B(\xi),\ \sigma_A(\xi)\sigma_B(\xi),\ \cdots,\ \sigma_A(\xi)^{{d_\xi}-1}\sigma_B(\xi)\right]=\\
      &\textnormal{rank}\left[\sigma_B(\xi),\ \sigma_A(\xi)\sigma_B(\xi),\ \cdots,\ \sigma_A(\xi)^{{d_\xi}^2-1}\sigma_B(\xi)\right]=d_\xi.
    \end{align*} The proof of (1) is complete. 
\item     
    Now, let us prove (2). So, 
    conversely, suppose that $$\forall\xi\in\textnormal{Rep}(G),\ \textnormal{rank}\left[\sigma_B(\xi),\ \sigma_A(\xi)\sigma_B(\xi),\ \cdots,\ \sigma_A(\xi)^{{d_\xi}-1}\sigma_B(\xi)\right]=d_\xi.$$
  We want to prove the controllability of the Cauchy problem \eqref{Main} at any time $T>0$. According to Theorem \ref{observabletheorem}, it is sufficient to show that there exists $c_T>0$ such that 
    \begin{equation}\label{observableinequality}
    \displaystyle\int\limits_0^T||B^*S(t)^*f||^2_{L^2(G)}dt\geq c_T^2||f||^2_{L^2(G)},\ \forall f\in L^2(G),
    \end{equation}
    where $S(t)$ is the $C_0$-semigroup generated by $A$. 
    By the Kalman criterion, we know that the system
    \begin{equation}
        \displaystyle\frac{d\gamma_{\xi}}{dt}+\sigma_A(\xi)\gamma_{\xi}=\sigma_B(\xi)v_{\xi}
    \end{equation}
     is controllable for every $\xi\in\textnormal{Rep}(G).$ In consequence the inequality 
    \begin{equation}\label{Observabilioty:dxi}
    \displaystyle\int\limits_0^T||\sigma_B(\xi)^*\exp{(t\sigma_A(\xi)^*)}z||^2_{\textnormal{HS}}dt\geq c_{\xi,T}^2||z||^2_{\textnormal{HS}},\ \forall z\in \mathbb{C}^{d_\xi\times d_\xi},
    \end{equation}holds and let us denote by $c_{\xi,T}>0$ the largest constant that satisfies this inequality. Now, by using the identification in \eqref{identification} we observe that the observability inequality in \eqref{Observabilioty:dxi} is equivalent to the following  observability inequality 
\begin{equation}\label{Observabilioty:ell}
    \displaystyle\int\limits_0^T||\Sigma_B(\xi)^*\exp{(t\Sigma_A(\xi)^*)}z||^2_{\mathbb{C}^{d_\xi^2}}dt\geq c_{\xi,T}^2||z||^2_{\mathbb{C}^{d_\xi^2}},\ \forall z\in \mathbb{C}^{d_\xi^2}.
    \end{equation} This analysis shows that the controllability costs of the systems
    \begin{equation}\label{Maintransformed}
    \displaystyle\frac{d\widehat{u}(\xi)}{dt}=\sigma_A(\xi)\widehat{u}(\xi)+\sigma_B(\xi)\widehat{v}(\xi),\,\,\frac{d\widehat{U}(\xi)}{dt}=\Sigma_A(\xi)\widehat{U}(\xi)+\Sigma_B(\xi)\widehat{V}(\xi),
    \end{equation}are the same. In other words, by following the lines in the proof of $\textnormal{(2)}$ in Theorem \ref{MainII},   we have that the following observability inequality \begin{equation*}\displaystyle\int\limits_0^T||B^*S(t)^*f||^2_{L^2(G)}dt\geq c_{T}^2
        ||f||^2_{L^2(G)},\,\,\forall f\in L^2(G),
    \end{equation*} holds with $$c_T:=\inf\limits_{[\xi]\in\widehat{G}}c_{\xi,T}.$$
    So the equality in \eqref{observableinequality} holds with $$c_T:=\inf\limits_{[\xi]\in\widehat{G}}c_{\xi,T}= \inf\limits_{[\xi]\in\widehat{G}}1/\mathscr{C}_{\xi,T}=  1/ \sup_{[\xi]\in\widehat{G} }\mathscr{C}_{\xi,T}<\infty.$$ The proof of (2) is complete. Indeed, note that the
    controllability cost $\tilde{\mathscr{C}_{T}}$ of \eqref{Main} is the infimum of the constants $c_T>0$ satisfying  \eqref{observableinequality}, from where we deduce that
    \begin{equation}
      \mathscr{C}_{T}\geq   \tilde{\mathscr{C}_{T}}.
    \end{equation} 
    \end{itemize}
    Having proved (1) and (2) the proof of Theorem \ref{theorem:compact:Lie:group} is complete.
    \end{proof}

\subsection{Controllability for  fractional subelliptic diffusion models}\label{section:applications:Hor} Let $G$ be a compact Lie group, $\mathbb{X}=\{X_1,...,X_k\}$ be an orthonormal set of real left-invariant vector fields satisfying the H\"ormander condition at step $r$, and let $s>0.$ The positive fractional sub-Laplacian on $G$ or order $s$ associated to $\mathbb{X}$ is the operator $$\mathcal{L}_s:=\left(-\sum\limits_{j=1}^kX_j^2\right)^{s/2}.$$
The symbol of $\mathcal{L}_s$ can be computed in terms of the symbol of the sub-Laplacian $\mathcal{L}=-\sum\limits_{j=1}^kX_j^2,$  
 $$\sigma_{ \mathcal{L}}(\xi)\equiv \textnormal{diag}(\lambda_{1,[\xi]},...,\lambda_{d_\xi,[\xi]}),\,\,\xi\in \textnormal{Rep}(G),$$ 
 as follows 
 \begin{equation}
     \sigma_{\mathcal{L}_s}(\xi)=\textnormal{diag}(\lambda_{1,[\xi]}^{s/2},...,\lambda_{d_\xi,[\xi]}^{s/2}),\,\, \xi\in \textnormal{Rep}(G).
 \end{equation} Recall that there exist constants $c,C>0$ such that the values $\lambda_{j,[\xi]}$ satisfy the inequality (see \cite{GarettoRuzhansky2015}) 
 \begin{equation}\label{eigenvalues:inequality}
 {{c\langle\xi\rangle^{1/r}}}\leq \lambda_{j,[\xi]}^{1/2}\leq C\langle\xi\rangle,\ \forall \xi\in\textnormal{Rep}(G),\ \forall j\in\{1,...,d_\xi\}.
 \end{equation}We have that $\sigma_{\mathcal{L}_s}(\xi)$ does not depend on  $x\in G,$ and that $\mathcal{L}_s$ is left-invariant. Let us consider the  {\it subelliptic diffusion model} 
 \begin{equation}\label{diffusionmodel}
 \frac{du}{dt}=-\mathcal{L}_su+Bv .  
\end{equation}
We shall prove the following:
\begin{itemize}
    \item[(1)] If \eqref{diffusionmodel} is controllable then $B:C^{\infty}(G)\rightarrow \textnormal{Im}(B)\subset  C^{\infty}(G)$ is invertible.
    \item[(2)] If $B:C^{\infty}(G)\rightarrow \textnormal{Im}(B)\subset  C^{\infty}(G)$ is invertible on its image subspace and its matrix-valued symbol satisfies the lower bound
    \begin{equation}\label{conditionforB}
      \forall z\in \mathbb{C}^{d_\xi\times d_\xi},\,  \Vert \sigma_{B}(\xi)^*z\Vert_{\textnormal{HS}}\geq C_B\langle \xi \rangle^\kappa \Vert z\Vert_{\textnormal{HS}}, 
    \end{equation}for some $\kappa\geq s/2,$ then  \eqref{diffusionmodel} is a controllable system.
\end{itemize}
For the proof of (1) let us proceed as follows. If \eqref{diffusionmodel} is controllable in any time $T>0$ then, by Theorem \ref{theorem:compact:Lie:group},
$$\textnormal{rank}[\sigma_B(\xi),\sigma_{-\mathcal{L}_s}(\xi)\sigma_B(\xi),\cdots,\sigma_{-\mathcal{L}_s}(\xi)^{d_\xi-1}\sigma_B(\xi)]=d_\xi,\ \forall\xi\in\textnormal{Rep}(G).$$ But $$\sigma_{-\mathcal{L}_s}(\xi)=\sigma_{-\mathcal{L}_s}(x,\xi)=\xi(x)^*(-\mathcal{L}_s\xi(x))=-(\xi(x)^*\mathcal{L}_s\xi(x))=-\sigma_{\mathcal{L}_s}(x,\xi)=-\sigma_{\mathcal{L}_s}(\xi)$$
(see Theorem \ref{The:quantisation:thm}), hence
$$\textnormal{rank}[\sigma_B(\xi),-\sigma_{\mathcal{L}_s}(\xi)\sigma_B(\xi),\cdots,[-\sigma_{\mathcal{L}_s}(\xi)]^{d_\xi-1}\sigma_B(\xi)]=d_\xi,\ \forall\xi\in\textnormal{Rep}(G).$$ Since $\sigma_{\mathcal{L}_s}(\xi)$ is a diagonal matrix we can show, by doing Gaussian reduction, that $$\textnormal{rank}[\sigma_B(\xi),-\sigma_{\mathcal{L}_s}(\xi)\sigma_B(\xi),\cdots,[-\sigma_{\mathcal{L}_s}(\xi)]^{d_\xi-1}\sigma_B(\xi)]=\textnormal{rank}[\sigma_B(\xi)],$$
so $\textnormal{rank}[\sigma_B(\xi)]=d_\xi,\ \forall\xi\in\textnormal{Rep}(G),$ i.e., $\sigma_B(\xi)$ is invertible for all $\xi\in\textnormal{Rep}(G).$ The formula $$B^{-1}f(x)=\sum_{[\xi]\in \widehat{G}}d_\xi\textnormal{Tr}[\xi(x)\sigma_{B}(\xi)^{-1}\widehat{f}(\xi)]$$ defines the inverse $B^{-1}:\textnormal{Im}(B)\rightarrow C^{\infty}(G)$ of $B:C^{\infty}(G)\rightarrow \textnormal{Im}(B)\subset  C^{\infty}(G).$
For the proof of $\textnormal{(2)}$ let us proceed as follows.
Note that the constant
\begin{equation}
    \mathscr{X}_{\xi,T}^2=\inf_{z\neq 0}\frac{ \displaystyle\int_0^T||\sigma_B(\xi)^*\exp{(t\sigma_{-\mathcal{L}_s}(\xi)^*)}z||^2_{\textnormal{HS}}dt }{||z||^2_{\textnormal{HS}}}, 
\end{equation}satisfies the inequality
\begin{equation}\label{Auxiliar:ineq}
    \int\limits_0^T||\sigma_B(\xi)^*\exp{(t\sigma_{-\mathcal{L}_s}(\xi)^*)}z||^2_{\textnormal{HS}}dt \geq \mathscr{X}_{\xi,T}^2||z||^2_{\textnormal{HS}},\,\,\forall z\neq 0.
\end{equation}Let $c_{\xi,T}^2\geq \mathscr{X}_{\xi,T}^2$ be the largest constant satisfying the inequality in \eqref{Auxiliar:ineq}. Note that
\begin{align*}
 c_{\xi,T}^2\geq \mathscr{X}_{\xi,T}^2 &= \inf_{z\neq 0}\frac{ \displaystyle\int_0^T||\sigma_B(\xi)^*\exp{(-t\sigma_{\mathcal{L}_s}(\xi)^*)}z||^2_{\textnormal{HS}}dt }{||z||^2_{\textnormal{HS}}}\\
 &\geq\inf_{z\neq 0}\frac{ \displaystyle\int_0^T C_B^2\langle\xi\rangle^{2\kappa} \left|\left|\exp{\left[-t\cdot\text{diag}\left(\lambda_{1,[\xi]}^{s/2},...,\lambda_{d_\xi,[\xi]}^{s/2}\right)\right]}z\right|\right|_{\textnormal{HS}}^2dt}{||z||^2_{\textnormal{HS}}}\ \textnormal{(by \eqref{conditionforB})},\\
  &=\inf_{z\neq 0}\frac{ \displaystyle\int_0^T C_B^2\langle\xi\rangle^{2\kappa} \left|\left|\text{diag}\left(\exp{(-t\lambda_{1,[\xi]}^{s/2})},...,\exp{(-t\lambda_{d_\xi,[\xi]}^{s/2})}\right)z\right|\right|_{\textnormal{HS}}^2dt}{||z||^2_{\textnormal{HS}}}\\
  &\geq\inf_{z\neq 0}\frac{ \displaystyle\int_0^T C_B^2\langle\xi\rangle^{2\kappa} \exp{(-2t\gamma_{[\xi]}^{s/2})}\left|\left|z\right|\right|_{\textnormal{HS}}^2dt}{||z||^2_{\textnormal{HS}}}\ \textnormal{(where } \gamma_{[\xi]}:=\max\limits_{1\leq j\leq d_\xi}\lambda_{j,[\xi]}\textnormal{)},\\
  &\geq\inf_{z\neq 0}\frac{ \displaystyle\int_0^T C_B^2\langle\xi\rangle^{2\kappa} \exp{(-2tC\langle\xi\rangle^{s})}\left|\left|z\right|\right|_{\textnormal{HS}}^2dt}{||z||^2_{\textnormal{HS}}}\ \textnormal{(by \eqref{eigenvalues:inequality})},\\
  &\displaystyle=C_B^2\langle\xi\rangle^{2\kappa}\times\frac{\left(1-e^{-2TC\langle\xi\rangle^{s}}\right)}{2C\langle\xi\rangle^{s}}\\
  &\displaystyle=C_B^2\langle\xi\rangle^{2\kappa-s}\left(1-e^{-2TC\langle\xi\rangle^{s}}\right)
\end{align*}
 In consequence, since $\langle\xi\rangle\geq 1$ and $\kappa\geq s/2,$ we have that 
\begin{equation}
   c_{\xi,T}^2\geq  C_B^2\langle\xi\rangle^{2\kappa-s}\left(1-e^{-2TC\langle\xi\rangle^{s}}\right)\geq C_B^2\left(1-e^{-2TC}\right)  \neq 0.
\end{equation}All the previous analysis shows that
\begin{equation}
    \inf_{[\xi]\in \widehat{G}}c_{\xi,T}^2\geq C_B^2\left(1-e^{-2TC}\right)\neq 0.
\end{equation}
With the notation of the proof of Theorem \ref{theorem:compact:Lie:group} we have that
$$c_T:=\inf\limits_{[\xi]\in\widehat{G}}c_{\xi,T}= \inf\limits_{[\xi]\in\widehat{G}}1/\mathscr{C}_{\xi,T}=  1/ \sup_{[\xi]\in\widehat{G} }\mathscr{C}_{\xi,T}<\infty.$$ 
Then, we have proved that   \eqref{diffusionmodel} is a controllable system. Note that in this case the controllability cost $\Tilde{\mathscr{C}}_T$ of \eqref{diffusionmodel} can be estimated as
\begin{align*}
  \Tilde{\mathscr{C}}_T^2\leq   \mathscr{C}_T^2=\sup_{[\xi]\in\widehat{G} }\mathscr{C}_{\xi,T}^2=\inf_{[\xi]\in \widehat{G}}1/c_{\xi,T}^2\leq \frac{1}{C_B^2\left(1-e^{-2TC}\right)}.
\end{align*}In consequence,
\begin{align*}
    \Tilde{\mathscr{C}}_T\leq \frac{1}{C_B\sqrt{\left(1-e^{-2TC}\right)}}.
\end{align*}

Summarising all the discussion above we have proved the following controllability criterion for subelliptic diffusions models on $G.$
\begin{theorem}
    Let $G$ be a compact Lie group,  and let $B:C^\infty(G)\rightarrow C^\infty(G)$ be a continuous left-invariant linear operator. 
    \begin{itemize}
        \item[(1)]  If the Cauchy problem
    \begin{equation}\label{Main:example} \begin{cases}\displaystyle \frac{du}{dt}=-\mathcal{L}_su+Bv  ,& 
    \\
\\u(0)=u_{0}\in C^{\infty}(G)
\text{ } \end{cases}
\end{equation}
    is controllable in time $T>0$, then $B:C^{\infty}(G)\rightarrow \textnormal{Im}(B)\subset  C^{\infty}(G)$ is an invertible continuous linear operator on its image.
    \item[(2)] Conversely, assume  $B:C^{\infty}(G)\rightarrow \textnormal{Im}(B)\subset  C^{\infty}(G)$ is invertible on its image subspace and that its matrix-valued symbol satisfies the lower bound
    \begin{equation}
      \forall z\in \mathbb{C}^{d_\xi\times d_\xi},\,  \Vert \sigma_{B}(\xi)^*z\Vert_{\textnormal{HS}}\geq C_B\langle \xi \rangle^\kappa \Vert z\Vert_{\textnormal{HS}}, 
    \end{equation}for some $\kappa\geq s/2.$ Then  \eqref{Main:example} is a controllable system and its controllability cost $\Tilde{\mathscr{C}}_T$ satisfies the estimate
    \begin{equation}
         \Tilde{\mathscr{C}}_T\leq \frac{1}{C_B\sqrt{\left(1-e^{-2TC}\right)}},
    \end{equation} for some $C>0$.
    \end{itemize}
\end{theorem}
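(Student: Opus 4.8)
The plan is to read off both statements directly from the compact Lie group criterion (Theorem \ref{theorem:compact:Lie:group}), working entirely at the level of the matrix-valued symbols. By Theorem \ref{The:quantisation:thm} we have $\sigma_{-\mathcal{L}_s}(\xi)=-\sigma_{\mathcal{L}_s}(\xi)=-\textnormal{diag}(\lambda_{1,[\xi]}^{s/2},\ldots,\lambda_{d_\xi,[\xi]}^{s/2})$, a diagonal matrix, while $\sigma_B(\xi)$ is the symbol of $B$; the whole proof then amounts to translating controllability of \eqref{Main:example} into the Kalman rank condition and the finite global controllability cost for the pair $(\sigma_{-\mathcal{L}_s}(\xi),\sigma_B(\xi))$.

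For part (1), I would first invoke the direct implication of Theorem \ref{theorem:compact:Lie:group}: controllability forces the pointwise rank condition $\textnormal{rank}[\sigma_B(\xi),-\sigma_{\mathcal{L}_s}(\xi)\sigma_B(\xi),\ldots,(-\sigma_{\mathcal{L}_s}(\xi))^{d_\xi-1}\sigma_B(\xi)]=d_\xi$ for every $\xi\in\textnormal{Rep}(G)$. The key algebraic step is to collapse this Kalman matrix: since $\sigma_{\mathcal{L}_s}(\xi)$ is diagonal, each block $\sigma_{\mathcal{L}_s}(\xi)^j\sigma_B(\xi)$ is obtained by rescaling the rows of $\sigma_B(\xi)$, so a Gaussian reduction identifies the rank of the full Kalman matrix with $\textnormal{rank}[\sigma_B(\xi)]$. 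This yields invertibility of $\sigma_B(\xi)$ for all $\xi$, and I would finish by exhibiting the inverse explicitly through the quantization formula $B^{-1}f(x)=\sum_{[\xi]\in\widehat{G}}d_\xi\,\textnormal{Tr}[\xi(x)\sigma_B(\xi)^{-1}\widehat{f}(\xi)]$, which defines a continuous left inverse on $\textnormal{Im}(B)$.

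For part (2), I would apply the converse implication of Theorem \ref{theorem:compact:Lie:group}, so it suffices to exhibit a finite global controllability cost. Fixing $\xi$ and $z\neq 0$, I would bound the observability integral from below by first pulling out the factor $C_B^2\langle\xi\rangle^{2\kappa}$ via the symbol lower bound \eqref{conditionforB}, then diagonalizing the semigroup and estimating each diagonal exponential from below by $e^{-t\gamma_{[\xi]}^{s/2}}$ with $\gamma_{[\xi]}:=\max_{1\leq j\leq d_\xi}\lambda_{j,[\xi]}$, and finally replacing $\gamma_{[\xi]}^{s/2}$ by $C\langle\xi\rangle^s$ using the subelliptic eigenvalue bound \eqref{eigenvalues:inequality}. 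Evaluating the resulting time integral $\int_0^T e^{-2tC\langle\xi\rangle^s}\,dt$ produces the lower bound $c_{\xi,T}^2\geq C_B^2\langle\xi\rangle^{2\kappa-s}(1-e^{-2TC\langle\xi\rangle^s})$.

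The main obstacle is to make this bound \emph{uniform} in $\xi$, and this is exactly where the hypothesis $\kappa\geq s/2$ is sharp: because $\langle\xi\rangle\geq 1$ one has $\langle\xi\rangle^{2\kappa-s}\geq 1$, so the gain $\langle\xi\rangle^{2\kappa}$ coming from $\sigma_B^*$ compensates the loss $\langle\xi\rangle^{-s}$ produced by the fast high-frequency decay of the subelliptic heat semigroup, giving $c_{\xi,T}^2\geq C_B^2(1-e^{-2TC})>0$ independently of $\xi$. Consequently $c_T=\inf_{[\xi]\in\widehat{G}}c_{\xi,T}>0$, the global cost is finite, and Theorem \ref{theorem:compact:Lie:group} delivers controllability of \eqref{Main:example} together with the estimate $\tilde{\mathscr{C}}_T\leq\mathscr{C}_T\leq 1/(C_B\sqrt{1-e^{-2TC}})$, as claimed.
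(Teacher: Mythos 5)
Your proposal is correct and follows essentially the same route as the paper: part (1) via the rank condition from Theorem \ref{theorem:compact:Lie:group} collapsed by Gaussian reduction using the diagonality of $\sigma_{\mathcal{L}_s}(\xi)$, and part (2) via the same chain of lower bounds on the observability constant (symbol bound, worst eigenvalue $\gamma_{[\xi]}$, the subelliptic estimate \eqref{eigenvalues:inequality}, and the explicit time integral), with $\kappa\geq s/2$ and $\langle\xi\rangle\geq 1$ giving uniformity in $\xi$. No discrepancies to report.
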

\subsection{Wave equation vs. heat equation on Hilbert spaces}\label{Wave:vs:heat} Let $\mathcal{H}$ be a complex Hilbert space and let $\mathcal{H}^{\infty}\subset \mathcal{H}$ be a dense
linear subspace of $\mathcal{H}$. Let $\mathcal{H}=\bigoplus_{j}H_j$ be a decompositon of $\mathcal{H}$ in orthogonal subspaces $H_j$ of dimension $d_{j}\in\mathbb{N}.$ Let $A,B:\mathcal{H}^\infty\rightarrow \mathcal{H}$ be Fourier multipliers relative to the decomposition $\{H_j\}_{j\in \mathbb{N}_0}.$     Let us consider the second order Cauchy problem
\begin{equation}\label{wave:cauchy:problem}
    \left\{\begin{array}{l}
        \displaystyle\frac{d^2u}{dt^2}=Au+Bv;\\
        \\
        u(0)=u_0,\ u_t(0)=\tilde{u}_0,
        \end{array}\right.
\end{equation}
where $u:[0,T]\rightarrow \mathcal{H}^\infty$ is of $C^2$-class in time. The differential equation in \eqref{wave:cauchy:problem} is a {\it wave equation} and, analogously to the case of a first-order control system, we say that it is controllable in time $T>0$ if for every $u_T,\tilde{u}_T\in  \mathcal{H}^\infty$, there exists a control $v:[0,T]\rightarrow  \mathcal{H}^\infty$ such that the solution of \eqref{wave:cauchy:problem} satisfies $u(T)=u_T$ and $u_t(T)=\tilde{u}_T.$ 

On the other hand, the first order differential equation in \eqref{MainII} is a {\it heat equation}. It is well known that in the case of internal control (i.e. when $B$ is given by the multiplication operator $Bv=1_{\omega}v$ where $\omega$ is an open subset of $G$) the controllability of \eqref{wave:cauchy:problem} implies the controllability of \eqref{Main}. We refer to Kannai \cite{Kannai}, Russell \cite{Russell73}, and Miller \cite{Miller2004} for details. We shall prove the same result for any left-invariant operator $B$ satisfying some conditions. More precisely, we have the following theorem.

\begin{theorem}\label{thorem:wave:heat} Let $A,B:\mathcal{H}^\infty\rightarrow \mathcal{H}$ be Fourier multipliers relative to the decomposition $\{H_j\}_{j\in \mathbb{N}_0}$ of a Hilbert space $\mathcal{H}.$ Assume that $A$ is the generator of a strongly continuous semigroup. If the second order Cauchy problem 
\begin{equation}\label{wave:cauchy:problem:Th}
    \left\{\begin{array}{l}
        \displaystyle\frac{d^2u}{dt^2}=Au+Bv;\\
        \\
        u(0)=u_0,\ u_t(0)=\tilde{u}_0;\,u_0,\Tilde{u}_0\in \mathcal{H}^\infty,
        \end{array}\right.
\end{equation} is controllable in time $T>0$ then first order Cauchy problem 
 \begin{equation}\label{Main:tw}\begin{cases}\displaystyle\frac{d u}{dt}=Au+Bv,& 
    \\
\\u(0)=u_{0}\in \mathcal{H}^\infty
\text{ } \end{cases}
\end{equation}
is controllable in time $T>0$ provided that the image of the Cauchy problem \eqref{Main:tw} under the Fourier transform relative to the decomposition $(H_j)_{j\in \mathbb{N}},$  has a {\it finite global controllability cost} at time $T>0$.  
\end{theorem}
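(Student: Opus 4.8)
The plan is to pass to the finite-dimensional systems produced by the Fourier transform, convert each second-order system into a first-order one of doubled dimension, and then exhibit a clean equivalence between the Kalman condition for the wave system and the Kalman condition for the heat system; the conclusion will follow from Theorem~\ref{Main:Theorem:HS}(2).

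First, I would take the Fourier transform of \eqref{wave:cauchy:problem:Th} relative to $\{H_j\}_{j\in\mathbb{N}_0}$. Because $A$ and $B$ are Fourier multipliers, projecting onto $H_\ell$ yields, for every $\ell\in\mathbb{N}_0$, the finite-dimensional second-order control system
\begin{equation*}
\frac{d^2\widehat u(\ell)}{dt^2}=\sigma_A(\ell)\,\widehat u(\ell)+\sigma_B(\ell)\,\widehat v(\ell).
\end{equation*}
Proceeding exactly as in the proof of Theorem~\ref{Main:Theorem:HS}(1), I would fix $\zeta_0,\tilde\zeta_0,\zeta_T,\tilde\zeta_T\in\mathbb{C}^{d_\ell}$ and choose data $u_0,\tilde u_0,u_T,\tilde u_T\in\mathcal{H}^\infty$ supported on the single block $H_\ell$ with these prescribed Fourier coefficients. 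Controllability of \eqref{wave:cauchy:problem:Th} then supplies a control $v$ whose $\ell$-th Fourier component $\widehat v(\ell)$ drives the above system from $(\zeta_0,\tilde\zeta_0)$ to $(\zeta_T,\tilde\zeta_T)$; the block-diagonal (multiplier) structure of $A$ and $B$ guarantees that the $\ell$-th component evolves autonomously under $\widehat v(\ell)$. Hence each finite-dimensional wave system is controllable in time $T$.

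Next, I would write each such second-order system as a first-order system on $\mathbb{C}^{2d_\ell}$ for the state $(\widehat u(\ell),\widehat u_t(\ell))$, with matrices
\begin{equation*}
\mathcal{A}_\ell=\begin{pmatrix}0 & I\\ \sigma_A(\ell) & 0\end{pmatrix},\qquad \mathcal{B}_\ell=\begin{pmatrix}0\\ \sigma_B(\ell)\end{pmatrix}.
\end{equation*}
By Kalman's criterion (Theorem~\ref{Kalmantheorem}), controllability of the wave system is equivalent to $\textnormal{rank}\,[\mathcal{B}_\ell,\mathcal{A}_\ell\mathcal{B}_\ell,\dots,\mathcal{A}_\ell^{2d_\ell-1}\mathcal{B}_\ell]=2d_\ell$. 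A direct computation of the powers of $\mathcal{A}_\ell$ gives
\begin{equation*}
\mathcal{A}_\ell^{2k}\mathcal{B}_\ell=\begin{pmatrix}0\\ \sigma_A(\ell)^{k}\sigma_B(\ell)\end{pmatrix},\qquad \mathcal{A}_\ell^{2k+1}\mathcal{B}_\ell=\begin{pmatrix}\sigma_A(\ell)^{k}\sigma_B(\ell)\\ 0\end{pmatrix},
\end{equation*}
so that, as $k$ runs over $0,\dots,d_\ell-1$, the columns span exactly $\mathcal{R}_\ell\oplus\mathcal{R}_\ell$ inside $\mathbb{C}^{d_\ell}\oplus\mathbb{C}^{d_\ell}$, where $\mathcal{R}_\ell$ is the range of $[\sigma_B(\ell),\sigma_A(\ell)\sigma_B(\ell),\dots,\sigma_A(\ell)^{d_\ell-1}\sigma_B(\ell)]$. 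Therefore the wave rank equals $2\dim\mathcal{R}_\ell$, and the wave Kalman condition ($=2d_\ell$) holds if and only if the heat Kalman condition
\begin{equation*}
\textnormal{rank}\left[\sigma_B(\ell),\sigma_A(\ell)\sigma_B(\ell),\dots,\sigma_A(\ell)^{d_\ell-1}\sigma_B(\ell)\right]=d_\ell
\end{equation*}
holds.

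Combining the two steps shows that the heat Kalman condition \eqref{Rank:Hilbert:spaces} is satisfied for every $\ell\in\mathbb{N}_0$. Together with the standing hypothesis that the image of \eqref{Main:tw} under the Fourier transform has finite global controllability cost at time $T$, I would then invoke Theorem~\ref{Main:Theorem:HS}(2) to conclude that \eqref{Main:tw} is controllable in time $T$. I expect the main delicacy to be the bookkeeping in the first step, namely justifying rigorously that controllability of the infinite-dimensional wave problem descends to each single block while simultaneously matching both position and velocity data; the linear-algebraic equivalence of the two Kalman conditions is the conceptual core but is short once the block powers of $\mathcal{A}_\ell$ are recorded.
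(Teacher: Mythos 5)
Your proposal is correct and follows essentially the same route as the paper: descend to the finite-dimensional blocks, rewrite the second-order block system as a first-order one of doubled dimension, split the powers of $\left(\begin{smallmatrix}0 & I\\ \sigma_A(\ell) & 0\end{smallmatrix}\right)$ by parity to show the wave Kalman rank is twice the heat Kalman rank, and invoke Theorem~\ref{Main:Theorem:HS}(2) together with the finite global controllability cost hypothesis. The only cosmetic differences are that the paper performs the order reduction before taking the Fourier transform (you do it after) and uses a square control block $\left(\begin{smallmatrix}0 & 0\\ 0 & \sigma_B(\ell)\end{smallmatrix}\right)$ with a dummy control where you use the tall matrix $\left(\begin{smallmatrix}0\\ \sigma_B(\ell)\end{smallmatrix}\right)$; these have the same range, so the arguments coincide.
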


\begin{proof} First, we will make a reduction of order by setting $u_1:=u$ and $u_2:=u_t$ so that the Cauchy problem \eqref{wave:cauchy:problem:Th} becomes 
\begin{equation}\label{wave:reduced}
    \left\{\begin{array}{l}
    \displaystyle\frac{d}{dt}\left(\begin{array}{c}
    u_1\\
    u_2
    \end{array}\right)=\left(\begin{array}{cc}
    \textbf{0} & \text{Id}\\
    A & \textbf{0}\end{array}\right)\left(\begin{array}{c}
    u_1\\
    u_2
    \end{array}\right)+\left(\begin{array}{cc}
    \textbf{0} & \textbf{0}\\
    \textbf{0} & B\end{array}\right)\left(\begin{array}{c}
    w\\
    v
    \end{array}\right),\\
    \\
    \left(\begin{array}{c}
    u_1(0)\\
    u_2(0)
    \end{array}\right)=\left(\begin{array}{c}
    u_0\\
    \tilde{u}_0
    \end{array}\right).
    \end{array}\right.
\end{equation}
It is clear that the notion of controllability that we defined for \eqref{wave:cauchy:problem:Th} is equivalent to the definition of controllability of the first order system \eqref{wave:reduced}. Let us  suppose that \eqref{wave:reduced} is controllable in time $T>0.$  We shall show that the finite-dimensional control system \begin{equation}\label{wave:transformed}
    \left\{\begin{array}{l}
    \displaystyle\frac{d}{dt}\left(\begin{array}{c}
    \widehat{u}_1(\ell)\\
    \widehat{u}_2(\ell)
    \end{array}\right)=\left(\begin{array}{cc}
    \textbf{0} & \text{I}\\
    \sigma_A(\ell) & \textbf{0}\end{array}\right)\left(\begin{array}{c}
    \widehat{u}_1(\ell)\\
    \widehat{u}_2(\ell)
    \end{array}\right)+\left(\begin{array}{cc}
    \textbf{0} & \textbf{0}\\
    \textbf{0} & \sigma_B(\ell)\end{array}\right)\left(\begin{array}{c}
    \widehat{w}(\ell)\\
    \widehat{v}(\ell)
    \end{array}\right),\\
    \\
    \left(\begin{array}{c}
    \widehat{u}_1(\ell)(0)\\
    \widehat{u}_2(\ell)(0)
    \end{array}\right)=\left(\begin{array}{c}
    \widehat{u_0}(\ell)\\
    \widehat{\tilde{u}}_0(\ell)
    \end{array}\right),
    \end{array}\right.
\end{equation}
where $ \widehat{u}_j(\ell),\sigma_A(\ell),I,\sigma_B(\ell),\widehat{w}(\ell),\widehat{v}(\ell)\in\mathbb{C}^{d_\ell\times d_\ell}$, is also controllable in time $T.$ In fact, let $\left(\begin{array}{c}
    \zeta_{1,T}\\
    \zeta_{2,T}
    \end{array}\right)\in\mathbb{C}^{2d_\ell\times d_\ell}$, then the Fourier inversion formula implies that the functions $$u_T:= (e_\ell,\zeta_{1,T})_{\mathbb{C}^{d_\ell}}\ \textnormal{and}\ \tilde{u}_T:=(e_\ell,\zeta_{2,T})_{\mathbb{C}^{d_\ell}}$$ belong to $\mathcal{H}^\infty$ and $\widehat{u_T}(\ell)=\zeta_{1,T}$, $\widehat{\tilde{u}}_T(\ell)=\zeta_{2,T}.$ Since \eqref{wave:reduced} is controllable, there exist $w,v:[0,T]\rightarrow \mathcal{H}^\infty$ such that the solution $\left(\begin{array}{c}
    u_1\\
    u_2
    \end{array}\right)$ of \eqref{wave:reduced} is such that $u_1(T)=u_T$ and $u_2(T)=\tilde{u}_T.$
By taking the Fourier transform in \eqref{wave:reduced} at $\ell\in\mathbb{N}$ we obtain that $\left(\begin{array}{c}
    \widehat{u}_1(\ell)\\
    \widehat{u}_2(\ell)
    \end{array}\right)$ is the solution of \eqref{wave:transformed} and $\left(\begin{array}{c}
    \widehat{u}_1(\ell)(T)\\
    \widehat{u}_2(\ell)(T)
    \end{array}\right)=\left(\begin{array}{c}
    \zeta_{1,T}\\
    \zeta_{2,T}
    \end{array}\right)$. This argument holds for any $\left(\begin{array}{c}
    \zeta_{1,T}\\
    \zeta_{2,T}
    \end{array}\right)$, thus \eqref{wave:transformed} is controllable in time $T.$ Now, we can apply the rank Kalman condition to conclude that
    \begin{equation*}
    \textnormal{rank}\left[\left(\begin{array}{cc}
    \textbf{0} & \text{I}\\
    \sigma_A(\ell) & \textbf{0}\end{array}\right)^j\left(\begin{array}{cc}
    \textbf{0} & \textbf{0}\\
    \textbf{0} & \sigma_B(\ell) \end{array}\right)\right]_{0\leq j\leq 2d_\ell-1}=2d_\ell,
    \end{equation*}
    but 
    \begin{equation*}
        \left(\begin{array}{cc}
    \textbf{0} & \text{I}\\
    \sigma_A(\ell) & \textbf{0}\end{array}\right)^j\left(\begin{array}{cc}
    \textbf{0} & \textbf{0}\\
    \textbf{0} & \sigma_B(\ell) \end{array}\right)=\left\{\begin{array}{lll}
    \left(\begin{array}{cc}
    \textbf{0} & \textbf{0}\\
    \textbf{0} & \sigma_A(\ell)^{\frac{j}{2}}\sigma_B(\ell) \end{array}\right), &&\text{if}\ j\ \text{is even,}\\
    \\
    \left(\begin{array}{cc}
    \textbf{0} & \sigma_A(\ell)^{\frac{j-1}{2}}\sigma_B(\ell) \\
    \textbf{0} & \textbf{0}\end{array}\right), &&\text{if}\ j\ \text{is odd},\end{array}\right.
    \end{equation*}
    thus
    \begin{align*}
    2d_\ell=&\ \textnormal{rank}\left[\left(\begin{array}{cc}
    \textbf{0} & \text{I}\\
    \sigma_A(\ell) & \textbf{0}\end{array}\right)^j\left(\begin{array}{cc}
    \textbf{0} & \textbf{0}\\
    \textbf{0} & \sigma_B(\ell) \end{array}\right)\right]_{0\leq j\leq 2d_\ell-1}\\
    \\
    =&\ \textnormal{rank}\left[\left(\begin{array}{cc}
    \textbf{0} & \textbf{0}\\
    \textbf{0} & \sigma_A(\ell)^{i}\sigma_B(\ell) \end{array}\right),\left(\begin{array}{cc}
    \textbf{0} & \sigma_A(\ell)^{i}\sigma_B(\ell)\\
    \textbf{0} & \textbf{0}
    \end{array}\right)\right]_{0\leq i\leq d_\ell-1}\\
    \\
    =&\ 2\cdot\text{rank}[\sigma_B(\ell),\sigma_A(\ell)\sigma_B(\ell),\cdots,\sigma_A(\ell)^{d_\ell-1}\sigma_B(\ell)]\\
    \\
    \Longrightarrow&\ \text{rank}[\sigma_B(\ell),\sigma_A(\ell)\sigma_B(\ell),\cdots,\sigma_A(\ell)^{d_\ell-1}\sigma_B(\ell)]=d_\ell.
    \end{align*}
    This means that the rank Kalman condition \eqref{Kalman:compact:lie:group} is satisfied for each $\ell\in\mathbb{N}_0$. Additionally, if $A$ is the generator of a strongly continuous semigroup and the image of the Cauchy problem \eqref{Main:tw} under the group Fourier transform has finite global controllability cost, then by Theorem \ref{theorem:compact:Lie:group} (2), the system \eqref{Main:tw} is controllable. This completes the proof.
\end{proof}

As a consequence of Theorem \ref{thorem:wave:heat}, we consider the following application to the case of compact Lie groups.

\begin{corollary}\label{thorem:wave:heat:compact} Let $G$ be a compact Lie group, and $A,B:C^{\infty}(G)\longrightarrow C^{\infty}(G)$ be continuous left-invariant linear operators such that $A$ is the generator of a strongly continuous semigroup. If the second order Cauchy problem 
\begin{equation}
    \left\{\begin{array}{l}
        \displaystyle\frac{d^2u}{dt^2}=Au+Bv;\\
        \\
        u(0)=u_0,\ u_t(0)=\tilde{u}_0;\,u_0,\Tilde{u}_0\in C^\infty(G),
        \end{array}\right.
\end{equation} is controllable in time $T>0$. Then, the first order Cauchy problem 
 \begin{equation}
 \begin{cases}\displaystyle\frac{d u}{dt}=Au+Bv,& 
    \\
\\u(0)=u_{0}\in C^{\infty}(G)
\text{ } \end{cases}
\end{equation}
is controllable in time $T>0$, provided that its image under the group Fourier transform has finite global controllability cost.  
\end{corollary}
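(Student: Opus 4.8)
The plan is to obtain this corollary directly from the abstract wave-versus-heat result of Theorem \ref{thorem:wave:heat}, by specialising the Hilbert space framework to $\mathcal{H}=L^2(G)$ with dense subspace $\mathcal{H}^\infty=C^\infty(G)$. The only genuine content is to verify that the structural hypotheses of Theorem \ref{thorem:wave:heat} are satisfied in the compact Lie group setting, and that the two a priori different notions of \emph{finite global controllability cost}---the one indexed by the Hilbert-space decomposition $\{H_j\}_{j\in\mathbb{N}_0}$ appearing in Theorem \ref{thorem:wave:heat}, and the one indexed by the unitary dual $\widehat{G}$ appearing in the statement of the corollary---actually coincide.

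First I would fix the Peter--Weyl decomposition $L^2(G)=\bigoplus_{j\in\mathbb{N}_0}H_j$, enumerating $\widehat{G}=\{[\xi_j]\}_{j\in\mathbb{N}_0}$ and setting $H_j=\textnormal{span}\{(\xi_j)_{il}:1\le i,l\le d_{\xi_j}\}$, so that $d_j=d_{\xi_j}^2$, exactly as in Remark \ref{main:remark:global}. By the Proposition of \cite{Ruz} recalled above, a continuous left-invariant operator has an $x$-independent symbol and therefore satisfies $\widehat{Tf}(\xi)=\sigma_T(\xi)\widehat{f}(\xi)$; in particular it leaves each $H_j$ invariant, so by Theorem \ref{THM:inv-rem} both $A$ and $B$ are Fourier multipliers relative to $\{H_j\}_{j\in\mathbb{N}_0}$ in the sense of Definition \ref{Fourier:H}. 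Combined with the standing assumption that $A$ generates a strongly continuous semigroup on $L^2(G)$, this shows that all the hypotheses on $(\mathcal{H},\mathcal{H}^\infty,A,B)$ required by Theorem \ref{thorem:wave:heat} are in force, and that the controllability of the given second-order problem is precisely the controllability of the wave problem \eqref{wave:cauchy:problem:Th} in this instance.

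It then remains to transport the cost hypothesis. Remark \ref{main:remark:global} furnishes the block-diagonal identities $\sigma_A(\ell)\equiv\Sigma_A(\xi_\ell)$ and $\sigma_B(\ell)\equiv\Sigma_B(\xi_\ell)$, while the computation carried out in the proof of Theorem \ref{theorem:compact:Lie:group}---namely the passage from \eqref{Observabilioty:dxi} to \eqref{Observabilioty:ell} via the identification \eqref{identification}---shows that the observability inequality for the $d_{\xi_\ell}$-dimensional matrix system driven by $(\sigma_A(\xi_\ell),\sigma_B(\xi_\ell))$ is equivalent to the one for the $d_{\xi_\ell}^2$-dimensional vector system driven by $(\Sigma_A(\xi_\ell),\Sigma_B(\xi_\ell))$, with the \emph{same} optimal constant. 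Hence $\mathscr{C}_{\ell,T}=\mathscr{C}_{\xi_\ell,T}$ for every $\ell$, and the hypothesis that the group Fourier image of the first-order problem has finite global controllability cost, $\sup_{[\xi]\in\widehat{G}}\mathscr{C}_{\xi,T}<\infty$, is identical to the requirement $\sup_{\ell\in\mathbb{N}_0}\mathscr{C}_{\ell,T}<\infty$ demanded by Theorem \ref{thorem:wave:heat}. Invoking that theorem then yields the controllability of the first-order Cauchy problem in time $T>0$, which completes the argument. The main obstacle here is not analytic but organisational: one must keep careful track of the identification \eqref{identification} between $d_\xi\times d_\xi$ matrices and $\mathbb{C}^{d_\xi^2}$, so that the representation-theoretic symbols and the Hilbert-space-decomposition symbols---and thus the two cost functionals $\mathscr{C}_{\xi,T}$ and $\mathscr{C}_{\ell,T}$---are matched correctly, allowing the abstract theorem to be applied without loss.
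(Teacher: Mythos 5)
Your proposal is correct and follows essentially the same route as the paper: the paper's proof also fixes the Peter--Weyl decomposition $H_j=\textnormal{span}\{(\xi_j)_{il}\}$ with $d_j=d_{\xi_j}^2$, invokes Remark \ref{main:remark:global} to identify $\sigma_A(\ell)\equiv\Sigma_A(\xi_\ell)$ and $\sigma_B(\ell)\equiv\Sigma_B(\xi_\ell)$, and then applies Theorem \ref{thorem:wave:heat}. Your additional care in matching the two cost functionals $\mathscr{C}_{\xi,T}$ and $\mathscr{C}_{\ell,T}$ via the identification \eqref{identification} is a point the paper's proof leaves implicit, so your write-up is if anything slightly more complete.
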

\begin{proof} For the proof let us use the notation in Remark  \ref{main:remark:global}. We can enumerate the unitary dual
    $\widehat{G}$ as $[\xi_{j}],$ for $j\in\mathbb{N}_0.$  In this way we fix the orthonormal basis
\begin{equation}
\{e_{jk}\}_{k=1}^{d_j}=\{ d_{\xi_{j}}^{\frac{1}{2}}(\xi_{j})_{il}  \}_{i,l=1}^{d_{\xi_j}},
\end{equation}
where $d_{j}=d_{\xi_j}^2.$ Then, we have the subspaces $$H_{j}=\textnormal{span}\{(\xi_j)_{i,l}:i,l=1,\cdots,d_{\xi_j}\}.$$
In view of Remark \ref{main:remark:global}, note that the symbols $\sigma_{A}(\ell)$ and $\sigma_{B}(\ell)$ of $A$ and $B$ relative to the decomposition  $H_{j}=\textnormal{span}\{(\xi_j)_{i,l}:i,l=1,\cdots,d_{\xi_j}\}$ are given by
\begin{equation}
    \sigma_A(\ell)\equiv \Sigma_A(\xi_\ell) \textnormal{  and  }\sigma_B(\ell)\equiv \Sigma_B(\xi_\ell),\,\,\ell\in \mathbb{N}_0,
\end{equation}respectively. The statement of Corollary \ref{thorem:wave:heat:compact} follows from Theorem \ref{thorem:wave:heat}. 
\end{proof}

\begin{corollary} In the context of Theorem \ref{thorem:wave:heat}, assume that the operator $$\Tilde{A}=\left(\begin{array}{cc}
    \textnormal{\textbf{0}} & \textnormal{Id}\\
    A & \textnormal{\textbf{0}}\end{array}\right)$$ is the infinitesimal generator of a $C_0$-semigroup and that  the rank Kalman condition 
    \begin{equation}\label{Kalman000}
        \text{rank}[\sigma_B(\ell),\sigma_A(\ell)\sigma_B(\ell),\cdots,\sigma_A(\ell)^{d_\ell-1}\sigma_B(\ell)]=d_\ell,
        \end{equation}
        holds for every $\ell\in\mathbb{N}_0$. Then the wave equation $d^2u/dt^2=Au+Bv$ is controllable in any time $T>0$ provided that
    \begin{equation}\label{wave:cost:condition}
    \displaystyle\inf\limits_{\ell\in\mathbb{N}_0}\sup\limits_{(z_1,z_2)\neq (0,0)}\frac{\displaystyle\int_0^T\left|\left|\sigma_B(\ell)^*S_1(t)\sigma_A(\ell)^*z_1+\sigma_B(\ell)^*S_2(t)z_2\right|\right|^2_{\textnormal{HS}}dt}{||z_1||_{\textnormal{HS}}^2+||z_2||_{\textnormal{HS}}^2}>0,
    \end{equation}
    where $z_1,z_2\in\mathbb{C}^{d_\ell}$, and  $$S_1(t):=\sum\limits_{n=0}^\infty \frac{t^{2n+1}}{(2n+1)!}(\sigma_A(\ell)^*)^{n},\textnormal{  and  }S_2(t):=\sum\limits_{n=0}^\infty \frac{t^{2n}}{(2n)!}(\sigma_A(\ell)^*)^{n},\,\,\,0\leq t\leq T.$$
    \end{corollary}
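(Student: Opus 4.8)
The plan is to recognise the controllability of the wave equation $d^2u/dt^2=Au+Bv$ as the controllability of its first-order reduction, and then to invoke the sufficiency direction, Theorem \ref{Main:Theorem:HS}(2), on the doubled space $\mathcal{H}\oplus\mathcal{H}$. Setting $u_1:=u$ and $u_2:=u_t$ turns the equation into the first-order system $dU/dt=\tilde A\,U+\tilde B\,V$ with $U=(u_1,u_2)^{T}$, with $\tilde A$ the operator in the statement, and with $\tilde B=\left(\begin{smallmatrix}\mathbf 0&\mathbf 0\\\mathbf 0&B\end{smallmatrix}\right)$. Relative to the decomposition $\{H_j\oplus H_j\}_{j\in\mathbb{N}_0}$ of $\mathcal{H}\oplus\mathcal{H}$, both $\tilde A$ and $\tilde B$ are Fourier multipliers, with symbols at level $\ell$ the block matrices $\sigma_{\tilde A}(\ell)=\left(\begin{smallmatrix}\mathbf 0&I\\\sigma_A(\ell)&\mathbf 0\end{smallmatrix}\right)$ and $\sigma_{\tilde B}(\ell)=\left(\begin{smallmatrix}\mathbf 0&\mathbf 0\\\mathbf 0&\sigma_B(\ell)\end{smallmatrix}\right)$. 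I would then check, one by one, the three hypotheses required to apply Theorem \ref{Main:Theorem:HS}(2) to the pair $(\tilde A,\tilde B)$.

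Two of the hypotheses are essentially free. The assumption that $\tilde A$ generates a $C_0$-semigroup is imposed outright. The rank Kalman condition for $(\sigma_{\tilde A}(\ell),\sigma_{\tilde B}(\ell))$ is exactly the $2d_\ell$-dimensional condition analysed in the proof of Theorem \ref{thorem:wave:heat}, where the block computation shows that the rank of $[\sigma_{\tilde B}(\ell),\dots,\sigma_{\tilde A}(\ell)^{2d_\ell-1}\sigma_{\tilde B}(\ell)]$ equals $2\cdot\textnormal{rank}[\sigma_B(\ell),\dots,\sigma_A(\ell)^{d_\ell-1}\sigma_B(\ell)]$, hence equals $2d_\ell$ precisely when the hypothesis \eqref{Kalman000} holds. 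The decisive remaining hypothesis is the finiteness of the global controllability cost of the reduced system, and this is exactly what condition \eqref{wave:cost:condition} is meant to encode.

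To extract \eqref{wave:cost:condition}, I would compute the level-$\ell$ observability integrand $\|\sigma_{\tilde B}(\ell)^*\exp(t\,\sigma_{\tilde A}(\ell)^*)Z\|^2$ for $Z=(z_1,z_2)^{T}$. Since $\sigma_{\tilde A}(\ell)^*=\left(\begin{smallmatrix}\mathbf 0&\sigma_A(\ell)^*\\I&\mathbf 0\end{smallmatrix}\right)$ is block anti-diagonal, its even powers are block-diagonal with entries $(\sigma_A(\ell)^*)^n$ and its odd powers remain block anti-diagonal; resumming $\exp(t\,\sigma_{\tilde A}(\ell)^*)$ separates into an even and an odd part, which are exactly the scalar-coefficient series $S_2(t)$ and $S_1(t)$ of the statement. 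Applying $\sigma_{\tilde B}(\ell)^*$ kills the first coordinate block and leaves only a lower component of the form $\sigma_B(\ell)^*\bigl(S_1(t)z_1+S_2(t)z_2\bigr)$, which upon identifying the two coordinate blocks $z_1,z_2$ is the vector inside the integral of \eqref{wave:cost:condition}. The sharp observability constant $c_{\ell,T}^2$ is then the infimum over $Z\neq 0$ of the associated Rayleigh quotient, and $\sup_\ell\mathscr{C}_{\ell,T}<\infty$ is equivalent to $\inf_\ell c_{\ell,T}>0$, i.e.\ to the positivity in \eqref{wave:cost:condition}. With the three hypotheses in hand, Theorem \ref{Main:Theorem:HS}(2) gives the controllability of the reduced system, hence of the wave equation, in any time $T>0$.

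I expect the main obstacle to lie in this last computation. One must track the block structure of $\sigma_{\tilde A}(\ell)^*$ carefully, use that $\sigma_A(\ell)^*$ commutes with the power series $S_1(t),S_2(t)$, and confirm through the semigroup and adjoint symbol identities \eqref{symbolproperties:b} that $\exp(t\,\sigma_{\tilde A}(\ell)^*)$ really is the level-$\ell$ symbol of $\tilde S(t)^*$, so that the finite-dimensional observability inequalities reassemble by Plancherel into the observability inequality for $\tilde B^*\tilde S(t)^*$ on $\mathcal{H}\oplus\mathcal{H}$. The one point demanding genuine care is matching the precise placement of the factor $\sigma_A(\ell)^*$ inside the integrand of \eqref{wave:cost:condition} against what the exponential actually produces.
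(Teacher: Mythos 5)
Your route is the same as the paper's: reduce the wave equation to the first--order system for $(u,u_t)$, identify the block symbols $\sigma_{\tilde A}(\ell)=\left(\begin{smallmatrix}\mathbf 0&I\\ \sigma_A(\ell)&\mathbf 0\end{smallmatrix}\right)$ and $\sigma_{\tilde B}(\ell)=\left(\begin{smallmatrix}\mathbf 0&\mathbf 0\\ \mathbf 0&\sigma_B(\ell)\end{smallmatrix}\right)$, obtain the $2d_\ell$--dimensional Kalman condition from the block computation already carried out in the proof of Theorem \ref{thorem:wave:heat}, compute the block exponential so as to recognise \eqref{wave:cost:condition} as the finite global controllability cost hypothesis, and conclude by Theorem \ref{Main:Theorem:HS}(2). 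The three hypotheses are verified in the same order and by the same means as in the paper.

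The one point you flag but leave unresolved is a genuine discrepancy, and it is worth locating precisely. The adjoint of $\sigma_{\tilde A}(\ell)$ is, as you compute, $\left(\begin{smallmatrix}\mathbf 0&\sigma_A(\ell)^*\\ I&\mathbf 0\end{smallmatrix}\right)$, whose exponential is $\left(\begin{smallmatrix}S_2(t)&S_1(t)\sigma_A(\ell)^*\\ S_1(t)&S_2(t)\end{smallmatrix}\right)$; after applying $\sigma_{\tilde B}(\ell)^*$ the surviving component is $\sigma_B(\ell)^*\bigl(S_1(t)z_1+S_2(t)z_2\bigr)$, with no factor $\sigma_A(\ell)^*$ acting on $z_1$. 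The paper instead exponentiates $\left(\begin{smallmatrix}\mathbf 0&I\\ \sigma_A(\ell)^*&\mathbf 0\end{smallmatrix}\right)$, that is, the blocks are starred but the block structure is not transposed, and this is what produces the term $\sigma_B(\ell)^*S_1(t)\sigma_A(\ell)^*z_1$ in \eqref{wave:cost:condition}. So your integrand is the correct one and the mismatch originates in the statement rather than in your argument. As written, your proof establishes the corollary with \eqref{wave:cost:condition} replaced by the condition
\begin{equation*}
\inf_{\ell\in\mathbb{N}_0}\ \inf_{(z_1,z_2)\neq(0,0)}\frac{\displaystyle\int_0^T\bigl\|\sigma_B(\ell)^*S_1(t)z_1+\sigma_B(\ell)^*S_2(t)z_2\bigr\|^2_{\textnormal{HS}}\,dt}{\|z_1\|_{\textnormal{HS}}^2+\|z_2\|_{\textnormal{HS}}^2}>0;
\end{equation*}
to recover the literal statement you would need this to be equivalent to \eqref{wave:cost:condition}, which requires at least invertibility of $\sigma_A(\ell)$ together with uniform two--sided bounds to control the change of denominator under $z_1\mapsto\sigma_A(\ell)^*z_1$, and that is not available in general. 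You should either prove the corollary in the corrected form or state explicitly the extra hypotheses under which the two conditions coincide. Note also that \eqref{wave:cost:condition} carries a $\sup$ over $(z_1,z_2)$ where an observability constant requires an $\inf$; both your reading and the paper's own proof treat it as an $\inf$.
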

    \begin{proof} Note that the rank Kalman condition \eqref{Kalman000} is equivalent to the following Kalman conditon (as we have established in the proof of Theorem  \ref{thorem:wave:heat})
     \begin{align*}
    &\textnormal{rank}\left[\left(\begin{array}{cc}
    \textbf{0} & \text{I}\\
    \sigma_A(\ell) & \textbf{0}\end{array}\right)^j\left(\begin{array}{cc}
    \textbf{0} & \textbf{0}\\
    \textbf{0} & \sigma_B(\ell) \end{array}\right)\right]_{0\leq j\leq 2d_\ell-1}\\
    &=2d_\ell\\
    &=2\text{rank}[\sigma_B(\ell),\sigma_A(\ell)\sigma_B(\ell),\cdots,\sigma_A(\ell)^{d_\ell-1}\sigma_B(\ell)]
    \end{align*} and note that the observability inequality for the system \eqref{wave:transformed} reduces to 
    \begin{equation*}
        \int_0^T\left|\left|\left(\begin{array}{cc}
    \textbf{0} & \textbf{0}\\
    \textbf{0} & \sigma_B(\ell)^*\end{array}\right)\exp{\left[t\left(\begin{array}{cc}
    \textbf{0} & \text{I}\\
    \sigma_A(\ell)^* & \textbf{0} \end{array}\right)\right]}\left(\begin{array}{c}z_1\\z_2\end{array}\right)\right|\right|^2_{\textnormal{HS}}dt\geq c_{\ell,T}^2\left|\left|\left(\begin{array}{c}z_1\\z_2\end{array}\right)\right|\right|^2_{\textnormal{HS}}.
    \end{equation*}
    Since $$\left(\begin{array}{cc}
    \textbf{0} & \text{I}\\
    \sigma_A(\ell)^* & \textbf{0} \end{array}\right)^j=\left\{\begin{array}{lll}\left(\begin{array}{cc}
    (\sigma_A(\ell)^*)^{\frac{j}{2}} & \textbf{0}\\
    \textbf{0} & (\sigma_A(\ell)^*)^{\frac{j}{2}} \end{array}\right), &&\textnormal{if}\ j\ \textnormal{is even},\\
    \\
    \left(\begin{array}{cc}
    \textbf{0} & (\sigma_A(\ell)^*)^{\frac{j-1}{2}}\\
    (\sigma_A(\ell)^*)^{\frac{j+1}{2}} & \textbf{0} \end{array}\right), &&\textnormal{if}\ j\ \textnormal{is odd},\end{array}\right.$$
    then we have that $$\exp{\left[t\left(\begin{array}{cc}
    \textbf{0} & \text{I}\\
    \sigma_A(\ell)^* & \textbf{0} \end{array}\right)\right]}=\left(\begin{array}{cc}
    S_2(t) & S_1(t)\\
    S_1(t)\sigma_A(\ell)^* & S_2(t) \end{array}\right)$$
    so, the observability inequality above becomes
    \begin{equation*}
        \int_0^T\left|\left|\left(\begin{array}{cc}
    \textbf{0} & \textbf{0}\\
    \textbf{0} & \sigma_B(\ell)^*\end{array}\right)\left(\begin{array}{cc}
    S_2(t) & S_1(t)\\
    S_1(t)\sigma_A(\ell)^* & S_2(t) \end{array}\right)\left(\begin{array}{c}z_1\\z_2\end{array}\right)\right|\right|^2_{\textnormal{HS}}dt\geq c_{\ell,T}^2\left(||z_1||_{\textnormal{HS}}^2+||z_2||_{\textnormal{HS}}^2\right),
    \end{equation*}
    or equivalently,
    \begin{equation*}
    \displaystyle\int_0^T\left|\left|\sigma_B(\ell)^*S_1(t)\sigma_A(\ell)^*z_1+\sigma_B(\ell)^*S_2(t)z_2\right|\right|^2_{\textnormal{HS}}dt\geq c_{\ell,T}^2\left(||z_1||_{\textnormal{HS}}^2+||z_2||_{\textnormal{HS}}^2\right).
    \end{equation*}
    From here we can see that the condition \eqref{wave:cost:condition} is nothing but the property that  the Fourier transform of the system \eqref{wave:reduced} relative to the decomposition $(H_j)_{j\in\mathbb{N}_0}$  has finite global controllability cost. Therefore, by Theorem \ref{Main:Theorem:HS}, the system \eqref{wave:reduced} is controllable, so is \eqref{wave:cauchy:problem}. 
    \end{proof}
    \begin{remark} It is known that in the case of a compact Riemannian manifold $(M,g)$, if $A=\Delta$, where $\Delta$ is the negative Laplacian to the metric $g$,  the operator $$\Tilde{\Delta}=\left(\begin{array}{cc}
    \textbf{0} & \text{Id}\\
    \Delta & \textbf{0}\end{array}\right)$$ is the infinitesimal generator of a strongly continuous semigroup. This is due to the fact that it is  dissipative with respect to  a specific inner product defined in terms of the metric and the gradient of the manifold (see the classical work of Chen and Millman \cite{ChenMillman1980} for details). 
\end{remark}

\subsection{Control of the Schr\"odinger equation on Hilbert spaces}\label{Sch:section}
Let us consider the  Schr\"odinger equation
\begin{equation}\label{Schrodinger:problem}
    \left\{\begin{array}{l}
        \displaystyle i\frac{du}{dt}=Au+Bv;\\
        \\
        u(0)=u_0,
        \end{array}\right.
\end{equation}
where $u:[0,T]\rightarrow \mathcal{H}^\infty$ is of $C^2$-class in time, and  $A,B:\mathcal{H}^\infty\rightarrow \mathcal{H}$ are Fourier multipliers relative to the decomposition $\{H_j\}_{j\in \mathbb{N}_0}$ of a Hilbert space $\mathcal{H}.$ It is clear that \eqref{Schrodinger:problem} is equivalent to the following Cauchy problem
\begin{equation}\label{Schrodinger:problem:2}
    \left\{\begin{array}{l}
        \displaystyle \frac{du}{dt}=-iAu-iBv;\\
        \\
        u(0)=u_0.
        \end{array}\right.
\end{equation}Moreover, since $\sigma_{-iA}(\ell)=-i\sigma_A(\ell)$ and $\sigma_{-iB}(\ell)=-i\sigma_B(\ell),$ for all $\ell \in \mathbb{N},$
the Kalman condition
    \begin{equation}\label{Kalman:1}
       \forall \ell\in \mathbb{N},\,
       \textnormal{rank}\left[\sigma_{-iB}(\ell),\ \sigma_{-iA}(\ell)(\ell)\sigma_{-iB}(\ell),\ \cdots,\ \sigma_{-iA}(\ell)(\ell)^{{d_\ell}-1}\sigma_{-iB}(\ell)\right]=d_{\ell},
    \end{equation} 
holds if and only if the Kalman condition
  \begin{equation}\label{Kalman:2}
       \forall \ell\in \mathbb{N},\, \textnormal{rank}\left[\sigma_B(\ell),\ \sigma_A(\ell)\sigma_B(\ell),\ \cdots,\ \sigma_A(\ell)^{{d_\ell}-1}\sigma_B(\ell)\right]=d_{\ell},
    \end{equation} is satisfied. In view of the discussion above we have the following consequence of Theorem \ref{Main:Theorem:HS} for Schr\"odinger type models. Note that the natural assumption is that the operator $-iA$ generates a $C_0$-semigroup, which happens if for example the operator $A:\mathcal{H}^\infty\rightarrow \mathcal{H} $ is an unbounded  self-adjoint operator, see e.g. \cite{Weid}.
\begin{corollary}\label{Main:Theorem:HS:Sch}
Let $\mathcal{H}$ be a complex Hilbert space and let $\mathcal{H}^{\infty}\subset \mathcal{H}$ be a dense
linear subspace of $\mathcal{H}$. Let $\mathcal{H}=\bigoplus_{j}H_j$ be a decompositon of $\mathcal{H}$ in orthogonal subspaces $H_j$ of dimension $d_{j}\in\mathbb{N}.$ Let $A,B:\mathcal{H}^\infty\rightarrow \mathcal{H}$ be Fourier multipliers relative to the decomposition $\{H_j\}_{j\in \mathbb{N}_0}.$    
\begin{itemize}
        \item[(1)] Assume that the Sch\"rodinger equation
    \begin{equation}\label{MainII:Sc} \begin{cases}\displaystyle i\frac{d u}{dt}=Au+Bv,& 
    \\
\\u(0)=u_{0}\in \mathcal{H}^\infty,
\text{ } \end{cases}
\end{equation}
    is controllable, then for any $\ell\in \mathbb{N}_0,$ the global symbols $\sigma_A(\ell)$ and $\sigma_B(\ell)$ of $A$ and $B,$ respectively, satisfy the Kalman condition:
    \begin{equation}\label{Rank:Hilbert:spaces:Sc}
     \textnormal{rank}\left[\sigma_B(\ell),\ \sigma_A(\ell)\sigma_B(\ell),\ \cdots,\ \sigma_A(\ell)^{{d_\ell}-1}\sigma_B(\ell)\right]=d_{\ell}.   
    \end{equation} Additionally, if $-iA$ generates a strongly continuous semigroup on $\mathcal{H},$ the image of the Cauchy problem \eqref{Schrodinger:problem:2} under the Fourier transform relative to the decomposition $(H_j)_{j\in \mathbb{N}},$  has a {\it finite global controllability cost} at time $T>0,$ that is 
    $$  \mathscr{C}_{T}:=   \sup_{\ell\in \mathbb{N}_0 }\mathscr{C}_{\ell,T}<\infty.$$
    Moreover, 
    $$   \mathscr{C}_{T}\leq \Tilde{\mathscr{C}}_T, $$ where  $\tilde{\mathscr{C}_{T}}$ is  the controllability cost of \eqref{Schrodinger:problem:2}.
    \item[(2)] Conversely, assume that $-iA$ is the generator of a strongly continuous semigroup on $\mathcal{H},$ and that the Kalman condition \eqref{Rank:Hilbert:spaces:Sc} is satisfied for each $\ell\in\mathbb{N}_0$. Assume that the image of the Cauchy problem \eqref{Schrodinger:problem:2} under the Fourier transform relative to the decomposition $(H_j)_{j\in \mathbb{N}},$  has a {\it finite global controllability cost} in time $T>0,$ that is, $$  \mathscr{C}_{T}:=   \sup_{\ell\in \mathbb{N}_0 }\mathscr{C}_{\ell,T}<\infty. $$ Then, the Cauchy problem  \eqref{MainII:Sc} is controllable at time $T>0,$ and its controllability costs $\tilde{\mathscr{C}_{T}}$ satisfies the inequality
    \begin{equation}
      \mathscr{C}_{T}\geq   \tilde{\mathscr{C}_{T}}.
    \end{equation}
    \end{itemize}
\end{corollary}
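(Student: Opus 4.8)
The plan is to deduce the corollary directly from Theorem \ref{Main:Theorem:HS} applied to the equivalent first-order system \eqref{Schrodinger:problem:2}, exploiting that multiplication by the nonzero scalar $-i$ preserves both the Fourier-multiplier structure and the rank appearing in the Kalman condition. First I would note that \eqref{MainII:Sc} and \eqref{Schrodinger:problem:2} are literally the same Cauchy problem (the latter being obtained by multiplying the former by $1/i=-i$), so controllability of one is equivalent to controllability of the other and their controllability costs coincide. Hence it suffices to analyse \eqref{Schrodinger:problem:2} by regarding $\widetilde A:=-iA$ and $\widetilde B:=-iB$ as the operators playing the roles of $A$ and $B$ in Theorem \ref{Main:Theorem:HS}.

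Next I would check the hypotheses of Theorem \ref{Main:Theorem:HS} for these operators. Since each $H_j$ is invariant under $A$ and $B$, it is invariant under the scalar multiples $-iA$ and $-iB$ as well; by Theorem \ref{THM:inv-rem} they are therefore Fourier multipliers relative to $\{H_j\}_{j\in\mathbb{N}_0}$, with symbols $\sigma_{-iA}(\ell)=-i\sigma_A(\ell)$ and $\sigma_{-iB}(\ell)=-i\sigma_B(\ell)$. The standing assumption that $-iA$ generates a strongly continuous semigroup on $\mathcal{H}$ is precisely the semigroup hypothesis required to invoke both directions of Theorem \ref{Main:Theorem:HS} for \eqref{Schrodinger:problem:2}.

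The one computation worth recording is the equivalence of the Kalman conditions \eqref{Kalman:1} and \eqref{Kalman:2}. I would observe that for each $\ell$ and each $0\le j\le d_\ell-1$,
\[
\sigma_{-iA}(\ell)^{j}\,\sigma_{-iB}(\ell)=(-i)^{j+1}\,\sigma_A(\ell)^{j}\,\sigma_B(\ell),
\]
so the block matrix assembled from the symbols of $-iA,-iB$ is obtained from the block matrix assembled from the symbols of $A,B$ by scaling its $j$-th block-column by the nonzero factor $(-i)^{j+1}$. Such column scalings are invertible and leave the rank unchanged, so \eqref{Kalman:1} holds if and only if \eqref{Kalman:2} does; this is exactly the equivalence already recorded in the discussion preceding the statement.

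With these observations, applying part (1) of Theorem \ref{Main:Theorem:HS} to \eqref{Schrodinger:problem:2} yields the Kalman condition \eqref{Rank:Hilbert:spaces:Sc} (via the rank equivalence above) together with the finiteness of the global cost and the bound $\mathscr{C}_T\le\Tilde{\mathscr{C}}_T$, while part (2) yields controllability of \eqref{Schrodinger:problem:2}, hence of \eqref{MainII:Sc}, together with $\mathscr{C}_T\ge\Tilde{\mathscr{C}}_T$. I do not anticipate a genuine obstacle; the only point requiring mild care is that the global cost $\mathscr{C}_T=\sup_\ell\mathscr{C}_{\ell,T}$ must be computed from the Fourier image of \eqref{Schrodinger:problem:2}, that is, from the symbols $-i\sigma_A(\ell)$ and $-i\sigma_B(\ell)$, and that the cost $\Tilde{\mathscr{C}}_T$ of \eqref{MainII:Sc} be identified with that of the identical problem \eqref{Schrodinger:problem:2} — both of which are immediate once the reduction in the first step is in place.
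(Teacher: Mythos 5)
Your proposal is correct and follows essentially the same route as the paper: the paper's argument is exactly the reduction of \eqref{MainII:Sc} to the first-order problem \eqref{Schrodinger:problem:2} with operators $-iA$, $-iB$, the observation that $\sigma_{-iA}(\ell)=-i\sigma_A(\ell)$ and $\sigma_{-iB}(\ell)=-i\sigma_B(\ell)$ so that the Kalman conditions \eqref{Kalman:1} and \eqref{Kalman:2} are equivalent, followed by an application of both parts of Theorem \ref{Main:Theorem:HS}. Your explicit block-column scaling argument for the rank equivalence is a slightly more detailed justification of a step the paper leaves implicit, but it is the same proof.
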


\section{Conclusions}\label{conclusions}

In this work, we have considered the problem of the controllability of the Cauchy problem on complex  Hilbert spaces. Our approach reduces the controllability of the system to the validity of the Kalman condition for an infinite number of finite-dimensional controllability systems. This reduction is done by the Fourier analysis induced by a fixed orthogonal decomposition of the underlying Hilbert space over subspaces of finite dimension and the criterion is presented in terms of the matrix-valued symbols relative to these kinds of decomposition as developed in \cite{FourierHilbert,DelRuzTrace1}. After presenting our main Theorem  \ref{MainII}
we have identified in Algorithm \ref{algor} the required steps to analyse the controllability for a variety of problems that satisfy the invariance property in Theorem  \ref{MainII}. In particular, we have introduced the notion of the {\it global controllability cost} of the image of a system under the Fourier transform relative to a decomposition $(H_j)_{j\in \mathbb{N}}$ of a Hilbert space $\mathcal{H}.$ In terms of this notion we have estimated in a sharp way the {\it controllability cost} of the system. The prototype of the models under consideration as well as their controllability has been extensively analysed in Section \ref{Applications}.  There we have considered the control of subelliptic diffusion models on compact Lie groups associated with left-invariant operators, the control of fractional diffusion models for elliptic operators on compact manifolds and also, we have deduced some properties of the control for wave and Schr\"odinger equations and we have explained/illustrated the relation of such properties with respect to Kalman type criteria.

\bibliographystyle{amsplain}

\end{document}